\documentclass[reqno]{article}
\usepackage{mathtools}
\usepackage{amsfonts}
\usepackage{amsmath}
\usepackage{amsthm}
\usepackage{amssymb}
\usepackage{mathrsfs}
\usepackage{graphicx}
\usepackage{caption}
\usepackage{dsfont}
\usepackage{esint}
\usepackage{xcolor}
\usepackage{cases}
\usepackage{subcaption}
\usepackage{mathabx}
\usepackage{geometry}
\usepackage{booktabs,tabularx}
\usepackage[backend=biber,style=numeric,giveninits=true]{biblatex}

\PassOptionsToPackage{obeyFinal}{todonotes}
\usepackage[backgroundcolor=green!40, bordercolor = white, linecolor=green!40, colorinlistoftodos, textsize=footnotesize]{todonotes}
\usepackage[latin1]{inputenc}
\usepackage[sc]{mathpazo}
\usepackage[colorlinks]{hyperref}

\usepackage{amssymb,amsmath,amsthm,amsfonts, mathrsfs, mathtools}

\theoremstyle{plain}
\newtheorem{theorem}{Theorem}[section]

\newtheorem{corollary}[theorem]{Corollary}
\newtheorem{lemma}[theorem]{Lemma}

\newtheorem{proposition}[theorem]{Proposition}

\theoremstyle{definition}

\DeclareMathOperator{\arcsinh}{arcsinh}

\newcommand{\R}{\mathbb{R}}

\newcommand{\ob}{\overline{B_\rho}}

\newcommand{\clo}{\overline{\Omega}}
\newcommand{\cuo}{{\mathrm C}^1(\overline{\Omega})}

\newcommand\qtext[1]{\quad \text{#1} \quad}

\title{The Influence of Exclusion Zones on the Coexistence of Predator and Prey with an Allee Effect}

\author{
Henri Berestycki\\
\small Department of Mathematics, University of Maryland, College Park, MD 20742, USA\\
\small \'Ecole des Hautes \'Etudes en Sciences Sociales, Paris, France\\
\small Institute for Advanced Study, Hong Kong University of Science and Technology, Hong Kong\\
\small \texttt{hb@ehess.fr}
\and
William F. Fagan\\
\small Department of Biology, University of Maryland, College Park, MD 20742, USA\\
\small \texttt{bfagan@umd.edu}
\and
Alex Safsten\\
\small Department of Mathematics, University of Maryland, College Park, MD 20742, USA\\
\small \texttt{safsten@umd.edu}
}

\date{\today}

\begin{document}

\maketitle

\begin{abstract}
We propose a reaction--diffusion model of predator--prey interaction in which the predators occupy only a subset of the prey's territory, leaving a predator-free \emph{exclusion zone}. Ecological examples include marine protected areas where it is illegal to fish, or buffer zones left between the territories of rival predators. The prey are subject to a strong Allee effect, so excessive predation may lead to the extinction of both species. The exclusion zone mitigates this problem by providing the prey with a refuge in which to proliferate without predation. Thus, paradoxically, a smaller predator territory may be able to support a more substantial population than a larger one. Using a topological degree argument, we show in any dimension that, provided the exclusion zone is large enough, the system possesses spatially heterogeneous \emph{coexistence equilibria} with positive populations of both species. This result is global in the sense that it does not rely on local bifurcations from semi-trivial stationary states. We also show that, as the predator domain becomes asymptotically small, the total predator population does not vanish and in fact approaches an explicit positive limit. In some cases, the total predator population may actually be maximized in this limit of shrinking predation area. Conversely, we show that as the predator domain becomes large, it may exhibit thresholding behavior, passing suddenly from a regime with coexistence solutions to one in which extinction becomes unavoidable, highlighting the need for careful analysis in the management of predator--prey systems.
\end{abstract}

\noindent\textbf{2020 Mathematics Subject Classification.}
Primary 35K57; Secondary 35J57, 35B40, 92D25.

\medskip

\noindent\textbf{Keywords.}
reaction--diffusion system, predator--prey model, Allee effect, topological degree, exclusion zone, protection area, thin-domain limits, singular limits

\section{Introduction}
\subsection{The role of exclusion zones in predator--prey systems}
Spatial aspects of consumer-resource interactions have long been of interest in mathematical biology, both because of the challenges such problems present for analysis and because results can have practical relevance in applied ecology. Especially interesting problems arise when consumers and their resource species employ different movement behaviors and/or use space in different ways. For example, mammalian carnivores may spatially concentrate their activity in home ranges~\cite{moorcroftlewis, noonan} or along preferred travel routes~\cite{catdog}, creating a patchy composite of landscape zones with different densities of consumer and resource species~\cite{lewismurray}. 

A somewhat related phenomenon is to be found in the spatial organization of competing (i.e., mutually hostile) groups of predators. For example, groups of predators such as packs of wolves or coyotes often divide available space into sharply defined and defended regions, leading to territory formation~\cite{moorcroft}. As a consequence of this division, the interfaces between territories constitute buffer zones into which predators avoid venturing and where prey thrive~\cite{lewismurray}. It was shown~\cite{BerestyckiZilio} that such territoriality can be beneficial for the overall predator population in spite of casualties from the intraspecific hostility. Thus, in practice, the buffer zones between territories act as \emph{exclusion zones} in which predators are excluded and prey can proliferate without predation. 

Exclusion zones also arise in a structurally similar problem in applied ecology in the context of ``marine protected areas'' or MPAs~\cite{botsford, edgar, kriegl, nowakowski} in which human fishers (the consumers) are legally restricted from entering certain portions of a marine landscape so as to facilitate recovery of harvested species. The general idea is that when economically important resource species are shielded from harvest in protected (e.g., consumer-free or reduced-consumer) zones, the resource populations in those zones can build up to high levels and, through dispersal, contribute harvestable individuals to areas outside the protected zones~\cite{botsford}. Marine protected areas have been established in commercial fisheries regions worldwide~\cite{edgar, kriegl}, and the envisioned benefits of dispersal-supported harvest have been reported in many instances, both in terms of local augmentation of harvest~\cite{moland, marcos, nowakowski, sullivan-stack} and in terms of colonization of more distant sites~\cite{leport, lima}.

A variety of mathematical models have been developed to investigate the general utility of MPAs and to explore the ecological and harvest consequences of the number, size, stringency, and placement of MPAs~\cite{neubert, walters, DuShi2006}. Two patch models, wherein one patch is home to both consumers and resources, and the other patch only has the resource species, greatly simplify the spatial dynamics of MPAs, but make clear how a spatial exclusion zone can buffer the resource species away from extinction and allow the consumer population to persist~\cite{ghosh, gonzalezolivares}. Spatially explicit models, ranging from models with one-dimensional patch arrays mimicking coastlines~\cite{walters} to two-dimensional simulation models~\cite{fulton}, make clear how the magnitude and rate of dispersal of the resource species out of the exclusion zone can facilitate consumer persistence, determine feasible harvest levels, and influence spatial harvest strategies~\cite{langebrake, pilyugin}.  Models with Allee effects in the population growth dynamics of the resource species illustrate how important the size of the exclusion zone (or network of such zones) is to persistence of the resource species in the exclusion zone (and the system more generally)~\cite{aalto}. In a diffusive consumer-resource model with a consumer exclusion zone, a critical size for the exclusion zone exists that guarantees persistence of the resource species~\cite{DuShi2006}.

Here, we build on these efforts in several ways. We employ a partial differential equations approach to explore the dynamics of diffusive, spatially explicit models, first in one spatial dimension, and subsequently multiple dimensions. A resource species can inhabit the full spatial domain of these models, but the consumer species (e.g., a fishing fleet) is restricted to a subdomain in keeping with the concept of a strictly enforced MPA. We focus our attention on the location of the boundary of the subdomain and investigate how the size of the consumer-occupied subdomain determines the spatial dynamics of the coupled consumer-resource system. We identify opportunities for the existence of different types of equilibria and bifurcations among them, and further, we characterize the spatial profiles of the resource and consumer species within the model's domain.

Of particular interest is the regime in which the consumer's domain becomes asymptotically small. It may seem that in this case, the consumer population must vanish as its harvest territory becomes small. However, we shall see that their population approaches a positive limit, and in some circumstances, the smaller the consumer-occupied subdomain becomes, the more consumers can be supported. This counterintuitive result shows the subtle interplay between the geometry of consumer and resource territories and the biological mechanisms governing population dynamics.

\subsection{The Predator--Prey model with exclusion zones}\label{sec:model}

The most general model we consider is
\begin{equation}\label{eq:higher_dimensional_model}
    \begin{cases}
        u_t-d_u \Delta u=f(u)-\beta \mathbb{1}_{A}uv & x\in \Omega\\
        v_t-d_v \Delta v=-\gamma v+\alpha uv & x\in A\\
        \partial_\nu u(x,t)=0 & x\in\partial\Omega\\
        \partial_\nu v(x,t)=0 & x\in\partial A.
    \end{cases}
\end{equation}
where $\beta$ is the predation rate, $\gamma$ is the mortality rate of predators, $\alpha$ is a conversion rate, $d_u$ and $d_v$ are diffusion coefficients of the prey and predator, respectively, $\Omega\subset\mathbb R^N$ is the prey domain, $A\subset\Omega$ is the predator domain, and $\nu$ represents the outward-pointing normal vector of the boundary of either domain. Moreover, $A\subset\Omega\subset\mathbb R^N$ are open, $\alpha,\beta,\gamma>0$, and $f:[0,1]\to\mathbb R$ is a smooth function with the following properties: 
\begin{enumerate}
    \item[(F1)] $f$ is continuous.
    \item[(F2)] There are exactly three roots of $f$: $0$, $1$, and $\theta\in (0,1)$.
    \item[(F3)] For $x\in (\theta,1)$, $f(x)>0$ and for $x\in(0,\theta)$, $f(x)<0$.
    \item[(F4)] $\int_0^1 f(x)\,dx>0$.
\end{enumerate}
We consider only solutions $u$ and $v$ so that $0\leq u(t,x)\leq 1$ for all $x\in \Omega$ and $0\leq v(t,x)$ for all $x\in A$. Note that these definitions stipulate that $\Omega\setminus A$ is the predator-free exclusion zone (equivalently, the MPA) and that both predators and prey may occur in $A$; this will later prove notationally convenient. Figure~\ref{fig:OmegaA} shows an example of $\Omega$ and $A$.

\begin{figure}[h]
    \centering
    \includegraphics[width=.7\linewidth]{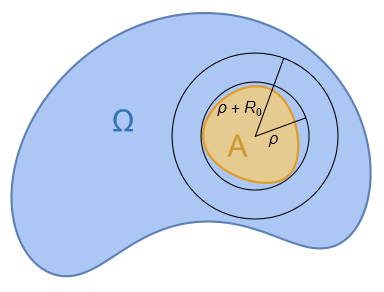} 
    \caption{A diagram of $A\subset\Omega$ illustrating the assumptions of Theorem~\ref{thm:existenceND}.}
    \label{fig:OmegaA}
\end{figure}

As a first result about the model~\eqref{eq:higher_dimensional_model}, we prove that the predator population can only persist if the predators' conversion rate $\alpha$ exceeds their mortality rate:
\begin{proposition}\label{prop:predator_extinction}
    If $\alpha<\gamma$, then all solutions to~\eqref{eq:higher_dimensional_model} with $0\leq u\leq 1$ and $v\geq 0$ satisfy $\lim_{t\to\infty}\Vert v\Vert_{L^1}=0$.
\end{proposition}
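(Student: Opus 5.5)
Integrating the predator equation over $A$ gives a linear differential inequality for the total predator mass, and Gr\"onwall's inequality then forces exponential decay. Set
\[
V(t)=\int_A v(t,x)\,dx=\Vert v(t)\Vert_{L^1(A)},
\]
which is nonnegative because $v\geq 0$.

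First, I differentiate $V$ in time and substitute the second equation of~\eqref{eq:higher_dimensional_model}:
\[
V'(t)=\int_A v_t\,dx = d_v\int_A \Delta v\,dx-\gamma\int_A v\,dx+\alpha\int_A uv\,dx .
\]
By the divergence theorem and the Neumann condition $\partial_\nu v=0$ on $\partial A$, the diffusion term is $d_v\int_{\partial A}\partial_\nu v\,dS=0$. This step needs $A$ to be regular enough for the divergence theorem, say Lipschitz boundary as in the setting of the paper. It also needs $v$ to be a classical solution, or at least regular enough to differentiate under the integral sign, for $t>0$; parabolic regularity provides this.

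Next, I use the a priori bound $0\leq u\leq 1$ together with $v\geq 0$. This gives $uv\leq v$ pointwise, so
\[
V'(t)\leq (\alpha-\gamma)V(t).
\]
Gr\"onwall's inequality then yields
\[
0\leq V(t)\leq V(t_0)\,e^{-(\gamma-\alpha)(t-t_0)}
\]
for any $t_0>0$. Since $\gamma-\alpha>0$, this tends to $0$ as $t\to\infty$.

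I expect no real obstacle; the argument is routine. The only points needing care are the justification of differentiating $V$ and integrating by parts, which is handled by smoothness of solutions for positive times, and starting at some $t_0>0$ so that $V(t_0)$ is finite. If $v(0)\in L^1$ we can take $t_0=0$ directly. The decay is in fact exponential with rate $\gamma-\alpha$.
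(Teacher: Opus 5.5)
Your proof is correct and is essentially the paper's argument. You integrate the predator equation over $A$, use the Neumann condition to remove the diffusion term, use $u\le 1$ to get $V'\le -(\gamma-\alpha)V$, and conclude with Gr\"onwall; your added remarks on regularity and on starting at $t_0>0$ are harmless refinements of the same route.
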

\begin{proof}
    Suppose that $(u,v)$ is a solution to~\eqref{eq:higher_dimensional_model} with $0\leq u(t,x)\leq 1$ and let $V(t)=\int_A v(t,x)\,dx$. Then
    \begin{equation*}
        \frac{dV}{dt}=\int_A(\alpha u-\gamma)v\,dx\leq -(\gamma-\alpha)V
    \end{equation*}
    By the Gr\"onwall inequality, $\lim_{t\to\infty}V(t)=0$, so $\Vert v(t,\cdot)\Vert_{L^1}\to 0$. 
\end{proof}
Since the analysis shows that predators inevitably go extinct when $\alpha<\gamma$, this parameter regime is not biologically relevant for sustained predator--prey interactions, and we therefore restrict attention to the case $\gamma\leq\alpha$.

Sections~\ref{sec:coexistence_results_higher_dim}-\ref{sec:thin_limit_1D} focus on analytical results for solutions of this model. Much of this analysis concerns equilibrium solutions to~\eqref{eq:higher_dimensional_model}. Clearly, there are three homogeneous equilibria, namely $v=0$ while $u\in\{0,\theta,1\}$. But we are most interested in inhomogeneous \emph{coexistence} equilibrium solutions to~\eqref{eq:higher_dimensional_model} in which  $0<u<1$ and $v>0$. In Section~\ref{sec:coexistence_results_higher_dim}, we present the main existence results of this paper, showing that such coexistence equilibria exist in both one and higher dimensions. We prove the existence of coexistence equilibria in Section~\ref{sec:coexistence_higher_dim}. The resulting existence theorems include as a condition that the predator-free domain $\Omega\setminus A$ must be sufficiently large. In Section~\ref{sec:persistance_due_to_exclusion}, we demonstrate how coexistence equilibria may cease to exist when this condition does not hold. We perform asymptotic analyses of inhomogeneous equilibria as $A$ becomes small (so-called ``thin-limit'' analysis) in one dimension in Section~\ref{sec:thin_limit_1D}.

We proceed to examine numerical solutions to~\eqref{eq:higher_dimensional_model} in Section~\ref{sec:numerics}. We will examine both time dependent and equilibrium solutions to~\eqref{eq:higher_dimensional_model} and address some questions best suited to a numerical approach:
\begin{itemize}
    \item What is the size (or shape) of the predator domain $A$ that maximizes the predator population?
    \item Under what circumstances may periodic solutions occur?
    \item As we shall see in Section~\ref{sec:persistance_due_to_exclusion}, for certain parameter values, there are no coexistence equilibria when $A=\Omega$, but there are such equilibria when $\Omega\setminus A$ is sufficiently large. How exactly does this transition between existence and non-existence occur? 
\end{itemize}

\subsection{Coexistence solutions in higher dimensions}\label{sec:coexistence_results_higher_dim}

One of the first questions we may ask about the model~\eqref{eq:higher_dimensional_model} is about the existence of coexistence equilibria. We consider the predator--prey model, where predation is limited to a predation zone $A\subset\Omega\subset\mathbb R^N$ (we are most interested in the cases $N=2,3$). We assume $A$ to be a smooth subset (connected or not) of $\Omega$. We assume here that $A$ does not share part of its boundary 
with that of $\Omega$, that is $\overline{A}\subset\Omega$.

We consider the system
\begin{equation}\label{statND}
    \begin{cases}
        - d_u \Delta u = f(u) - \beta \mathbb{1}_A u v, \qtext{in} \Omega,\\
        -d_v \Delta v = \alpha u v - \gamma v, \qtext{in} A, \\
        \partial_\nu u = 0 \qtext{on} \partial\Omega,\\
        \partial_\nu v = 0, \qtext{on} \partial A.
    \end{cases}
\end{equation}
The indicator function of the set $A$, $\mathbb{1}_A$ is defined as usual by  $\mathbb{1}_A(x)=1$ if $x\in A$, else
$\mathbb{1}_A(x)=0$ if $x\in \clo\setminus A$. In the boundary conditions, 
 $\nu$ stands for the outward unit normal vector on $\Omega$ or $A$ respectively. In this system, the set $A$ represents the predation zone, while $B:= \Omega\setminus\overline{A}$ is the exclusion zone. As usual, we note by $B_r(a)$ the open ball of radius $r$ centered at a point $a$ and we write $B_r= B_r(0)$.

We will prove the following result.
\begin{theorem}\label{thm:existenceND}
  Consider the system~\eqref{statND} with $f$ of bistable type satisfying the previous assumptions. Assume that $\gamma < \alpha$. There exists $R_0$ so that for any $A, \Omega$ and such that $A \subset B_\rho \subset B_{\rho+ R_0} \subset  \Omega$, 
there exists a solution $(u,v)$ of system~\eqref{statND} with $v>0$ in $A$ and $u$ is nonconstant. 

\end{theorem}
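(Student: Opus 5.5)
We will construct the coexistence state by a Leray--Schauder degree argument on a bounded open set of pairs $(u,v)$, deforming $\gamma$ or the coupling to a situation where the degree can be computed. The exclusion zone enters through a ``bump'' lower barrier. Since $\int_0^1 f>0$, there is $R_0$ and a radially decreasing function $\underline{u}$ that is a strict subsolution of $-d_u\Delta w=f(w)$ in $B_{R_0}(a)$, vanishes on $\partial B_{R_0}(a)$, and has $\max \underline u>\theta$. This is the classical Berestycki--Lions ground-state construction for bistable nonlinearities. The hypothesis $B_{\rho+R_0}\subset\Omega$ with $A\subset B_\rho$ lets us place such a ball in the predator-free zone $B=\Omega\setminus\overline A$. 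There $\mathbb 1_A=0$, so $\underline u$ is a subsolution of the $u$-equation regardless of $v$. Extended by $0$, $\underline u$ therefore acts as a uniform lower barrier for $u$ that is independent of the predator.

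\textbf{Step 1: a priori bounds.} We use the sweeping principle (Serrin) with the family $\underline u(\cdot-a)$ along a path of centres inside $\Omega\setminus \overline{B_\rho}$. This shows that any solution with $0\le u\le1$ satisfying $u\not\equiv 0$ near the ball, and in particular any solution in the connected component we track, satisfies $u\ge\underline u$. Hence $u$ is not identically $0$ or $\theta$ and is bounded below somewhere by a value exceeding $\theta$.

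For $v$, integrating the $v$-equation gives $\int_A(\alpha u-\gamma)v=0$. The Harnack inequality on $A$ (Neumann, with bounded coefficient $\alpha u-\gamma$) gives $\max_A v\le C\min_A v$. Integrating the $u$-equation over $\Omega$ gives $\beta\int_A uv=\int_\Omega f(u)\le |\Omega|\max f$. Together these yield $\|v\|_\infty\le C$, provided $u$ is not small on all of $A$. We ensure this by working in a set where $v>0$ forces $\max_A u\ge\gamma/\alpha$; this follows from $\int_A(\alpha u-\gamma)v=0$. We also need $u$ bounded away from zero in an $L^1$ sense on $A$, which we obtain from Harnack for $u$ on a neighbourhood of $\overline A$.

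\textbf{Step 2: the homotopy and the degree.} We use the open set $\mathcal O=\{(u,v): \underline u<u<1 \text{ (suitably)},\ 0<v<M\}$ in $C(\overline\Omega)\times C(\overline A)$. We write the system as a compact perturbation of the identity, $(u,v)=K(u,v)$, with $K$ built from the resolvents of $-d_u\Delta+m$ and $-d_v\Delta+m$. We then compute $\deg(I-K,\mathcal O,0)$. The total degree of the $u$-problem on the larger set $\{u\ge\underline u,\ v\ge0\}$ is computed by deforming $\beta$ to $0$, where the $u$-problem decouples and its index is that of the maximal state $u\equiv1$.

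From this total we subtract the contribution of the semitrivial states $(u_0,0)$ with $u_0\ge\underline u$. When $\beta$ is small these are close to $u\equiv 1$. Since $\alpha>\gamma$, the principal eigenvalue of $-d_v\Delta-(\alpha u_0-\gamma)$ on $A$ with Neumann data is negative. The semitrivial state is therefore unstable in the $v$-direction, and its index in the cone $v\ge0$ is $0$. The fixed-point index in cones (Dancer's theorem) gives precisely this: index $0$ when the linearized $v$-eigenvalue is negative. The total index $1$ then forces a fixed point with $v\not\equiv0$, hence $v>0$ by the strong maximum principle. Finally, $u$ is nonconstant because a constant $u$ with $\mathbb 1_A uv\ne0$ contradicts the $u$-equation in $B$.

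\textbf{Main obstacle.} The hardest step is making the degree computation global. We must show that along the homotopy in $\beta$ (or $\alpha$), no solution escapes through $\partial\mathcal O$, and that the semitrivial branch keeps index $0$ uniformly. This requires controlling, for all parameters, the principal Neumann eigenvalue on $A$ at every semitrivial state $u_0\ge \underline u$. Such states are not necessarily close to $1$ on $A$. My first attempt is to restrict the deformation to $\beta$ alone: the semitrivial states do not depend on $\beta$, so their index is constant. I would then pick $\gamma$-dependent admissible sets and verify the sign of the eigenvalue via the sweeping comparison $u_0\ge \underline u$ together with $\alpha>\gamma$. If this fails, I would instead exclude by a separate open set the semitrivial states with nonnegative eigenvalue, since these have index $\pm1$ in the cone, and count more carefully.
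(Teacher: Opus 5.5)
\textbf{There is a genuine gap: the total index is computed incorrectly.} You deform $\beta$ to $0$, but that homotopy is not admissible on any bounded set. Your only bound on $v$ comes from $\beta\int_A uv=\int_\Omega f(u)$, so it degenerates like $1/\beta$. Your own claims already show the conclusion fails. At $\beta=0$ every fixed point in your region is semitrivial, and you assert that each has index $0$ in the wedge. For $(1,0)$ this does hold, by the very criterion you quote, since $\lambda[1]=\gamma-\alpha<0$. So the total index at $\beta=0$ is $0$, not ``the index of $u\equiv 1$ in the decoupled $u$-problem''. In the wedge, the unstable $v$-direction at $(1,0)$ kills that index. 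With total $0$ and semitrivial contribution $0$, nothing forces a coexistence state.

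This is not a technicality. Coexistence states genuinely escape through $\Vert v\Vert=M$ as $\beta\to 0$: a bounded family would converge to a prey-only state $u_0$ with a positive principal eigenfunction, i.e.\ $\lambda[u_0]=0$.

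A cone-index proof needs a homotopy with $\beta$ fixed that makes the semitrivial states stable in $v$, for example raising $\gamma$ above $\alpha$.
\begin{itemize}
\item Bounds on $v$ stay uniform in $\gamma$, by Harnack plus the boundary-layer argument of Lemma~\ref{lem:S}.
\item For $\gamma>\alpha$, the identity $\int_A(\alpha u-\gamma)v=0$ forces $v\equiv 0$.
\item The total index there therefore equals the degree of the prey-only problem on the barrier set, namely $1$.
\end{itemize}

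\textbf{The sign of the eigenvalue at semitrivial states is not settled.} You correctly flag this as the main obstacle, but the proposed fix does not address it. Your bump is supported in the exclusion zone, so $u_0\ge\underline u$ gives no lower bound on $u_0$ inside $A$ and cannot determine the sign of $\lambda[u_0]$.

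The missing ingredient is the paper's Lemma~\ref{lem:u=1}. It works as follows.
\begin{itemize}
\item It replaces the single bump by the stronger barrier $\zeta$. This is the maximal solution in $\Omega\setminus\overline{B_\rho}$ vanishing on $\partial B_\rho$, and it satisfies $\zeta\ge 1-\eta$ on $\partial B_R$ and outside $B_R$.
\item For a prey-only state it slides the bump $V$ through all of $B_R$, across $A$. This is legitimate because when $v=0$ the equation $-d_u\Delta u=f(u)$ holds on all of $\Omega$.
\item This yields $u_0\ge\max V>\theta'$ everywhere, hence $u_0\equiv 1$ with $\lambda[1]=\gamma-\alpha<0$.
\end{itemize}
Sliding your own bump across $A$, with its height chosen above $\gamma/\alpha$, would also give the sign.

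\textbf{How the paper's route differs.} The paper avoids indices in cones altogether.
\begin{itemize}
\item It writes $v=s\phi[u]$ and uses $\lambda[u]=0$ as the equation for $s$.
\item It works on $D\times(0,S)$. The value $s=0$ is a boundary case excluded by Lemma~\ref{lem:u=1}; semitrivial states are not even zeros of its map, since $\lambda[1]\neq 0$.
\item It excludes $s=S$ by Lemma~\ref{lem:S}.
\item It homotopes the argument of $\lambda$ to $\sigma(s)$, which decouples the problem and gives degree $1$.
\end{itemize}

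\textbf{Minor points.}
\begin{itemize}
\item A ball of radius $R_0$ does not fit in an annulus of width $R_0$. The gap must be at least twice the bump radius.
\item Your Step 1 claim that every nontrivial solution lies above $\underline u$ is false; for example $u\equiv\theta$, $v=0$. It holds only because you impose it in $\mathcal O$.
\item With $0<v<M$ in the definition of $\mathcal O$, semitrivial states lie on $\partial\mathcal O$, so the degree there is undefined. You must work with the relative index in the wedge $v\ge 0$ throughout.
\end{itemize}
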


Theorem~\ref{thm:existenceND} shows that predators and prey may coexist provided that the prey have a large enough predator-free exclusion zone. We will see that the proof of this Theorem yields the additional result that if the gap between the predator domain and the edge of the prey domain is sufficiently large, the prey population density away from the predation zone can be made arbitrarily close to 1. That is, the prey population far from the predator domain is relatively unaffected by the predators. 

The condition of Theorem~\ref{thm:existenceND} that $A$ is a proper subset of $\Omega$ cannot be dropped in general. As we shall see in Section~\ref{sec:persistance_due_to_exclusion}, if $A=\Omega$, then no coexistence equilibrium is possible if $\gamma$ is sufficiently small. In this case, the predators are too effective for their own good. They over-hunt the prey, driving them below their own Allee threshold, leading to extinction of the prey followed by the extinction of the predators.

\subsection{Related literature}

The study of diffusive predator--prey models with the type of spatial heterogeneity introduced via an exclusion zone has developed since the foundational 2006 paper of Du and Shi~\cite{DuShi2006}, which introduced the notion of a \emph{protection zone} as a portion of the prey species' territory from which the predators are excluded. In this work, it is demonstrated that the protection zone fundamentally alters the system dynamics. They use a Holling Type II predation term and logistic growth for the prey. The existence of a critical size for the protection zone, which is expressed through the principal eigenvalue of the Laplacian, determines whether the prey can persist despite predation. When the protection zone exceeds this threshold, the prey persists there, even under high predator growth rates. On the other hand, smaller protection zones may lead to extinction (they consider a generalist predator species which may survive even without the prey species, hence the prey can be hunted to extinction without a resulting extinction of the predators). This framework established the concept of protection zones and emphasized the delicate interplay between spatial geometry, diffusion, and persistence in reaction-diffusion systems.

Subsequent work~\cite{CuiShiWu2014} has extended the concept of protection zones in several directions. In particular, the impact of the strong Allee effect for the prey is explored in~\cite{CuiShiWu2014}. It is shown again that the refuge provided by a protection zone can prevent ``over-exploitation'' wherein excessive predation drives both species to extinction. Specifically, it is shown that coexistence is possible if the protection zone exceeds a critical size. The analysis reveals bistability and threshold phenomena absent from the monostable formulation of~\cite{DuShi2006}. This line of investigation is further explored in~\cite{MinWang2018} by considering ratio-dependent predation that accounts for predator interference and developing a more detailed bifurcation analysis. In~\cite{MinWang2018}, the authors derive explicit analytical criteria for persistence and extinction, prove global existence and positivity of solutions, and demonstrate as in earlier work that spatially non-uniform solutions can bifurcate from semi-trivial (i.e., prey-only) steady states. Together, these studies establish the joint importance of Allee-type bistability and spatial exclusion in promoting coexistence and reveal how the interplay between nonlinear growth and refuge geometry can generate rich spatial dynamics.

Other studies in this area have focused on the role of protection zones and the interaction of protection zones with other ecological effects included in a model. For example, the effect of protection zones in spreading phenomena and invasion~\cite{DuPengSun2019,SunLei2023,SunHan2020}, the effect of more sophisticated growth and predation terms~\cite{LiWuDong2019, Wang2021}, the inclusion of additional species~\cite{LiWuDong2020}, and the inclusion of advective terms~\cite{MaTang2023}.

Existing studies generally fall into two classes: monostable prey models emphasizing global stability and uniqueness~\cite{LiWuDong2019,Wang2021}, and bistable or Allee-type models focusing on critical protection zone size and bifurcation~\cite{CuiShiWu2014,MinWang2018,DuPengSun2019,SunLei2023}.  In the context of coupled predator--prey systems, most analyses have relied on local bifurcation theory (typically perturbations from semi-trivial equilibria) to establish the existence of coexistence equilibria~\cite{CuiShiWu2014,MinWang2018,Yang2023}.  Consequently, the resulting positive equilibria are confined to parameter regimes near the onset of bifurcation, and these approaches offer little information about coexistence far from such thresholds.  Missing from the literature is a discussion of steady-state solutions far from and independent of bifurcation points. Also absent is an analytical treatment of the case of large protection zones when predators are confined to an asymptotically small region and a numerical demonstration of solution behaviors far from equilibrium and bifurcations associated with these behaviors. Finally, the focus of previous work is consistently on the effect of protection zones on the prey population, with little emphasis on the consequences for predators.

In an effort to fill these gaps, the present work shifts the perspective from the prey to the predator. While the spatial refuge is typically viewed as a mechanism for protecting prey populations, we instead investigate how it may \emph{benefit the predators} as well by increasing prey availability. In this spirit, we adopt the term \emph{exclusion zone}, synonymous with the ``protection zone'' used in earlier studies. This new terminology emphasizes that rather than focusing on shielding prey, our focus is on the ecological consequences on the predator population of being excluded from a portion of the prey habitat. A primary motivating question is whether there exists an \emph{optimal predator domain} which maximizes the long-term predator population by being large enough to provide a sufficient prey abundance while not being so large that the prey are over-exploited.

Specifically, our paper addresses the gaps in the literature through three principal advances. First, we establish the existence of coexistence equilibria \emph{without appealing to bifurcation from semi-trivial states}.  Using a novel topological-degree argument, we prove the existence of nontrivial steady states in parameter regimes that may be remote from classical bifurcation points.  This yields a more global and geometrically robust understanding of coexistence, complementing and extending the local bifurcation analyses of, e.g.,~\cite{DuShi2006,CuiShiWu2014,MinWang2018,Yang2023}.  
Second, we develop a rigorous \emph{thin-limit analysis} showing how coexistence persists and transforms as the predator region becomes vanishingly thin.  This asymptotic framework connects the fully coupled PDE model to lower-dimensional effective equations, revealing new behaviors not captured by prior protection-zone studies. Finally, our \emph{numerical investigations} suggest the occurrence of a variety of nonlinear phenomena including multiple coexistence branches, turning-point bifurcations, Hopf bifurcations, and transitions between extinction and persistence regimes. These phenomena highlight the rich dynamical landscape made possible by the coupling of Allee-type prey dynamics and spatial exclusion. Together, these contributions provide the first comprehensive picture of how bistability, geometry, and diffusion interact to shape coexistence in predator--prey systems with exclusion zones.

\section{Coexistence equilibrium in higher dimensions }\label{sec:coexistence_higher_dim}

This section is devoted to the proof of Theorem~\ref{thm:existenceND}. It relies on a fixed point formulation and topological degree argument. 
We denote by $R:= \rho+R_0$ so that $A\subset B_\rho\subset B_R\subset \Omega$. Without loss of generality, we assume that each of these balls is centered at the origin.
We want to show that~\eqref{statND} admits a coexistence solution $(u,v)$ provided $R_0$ exceeds some threshold value.
The proof will be carried out in a sequence of steps. 

\subsection*{Step 1. Auxiliary function in $B_R\setminus \overline{B_\rho}$}
We require some preliminary results on the semi-linear elliptic equation $-d_u \Delta u \,=\, f(u)$ in $\Omega$ and in some subdomains. 
We start by considering the following problem in the annular region $B_R \setminus \overline{B_\rho}$ with $R_0= R-\rho$:
\begin{equation}\label{NzetaBall}
    \begin{cases}
        &-d_u \Delta \zeta_R = f(\zeta_R) \qtext{in} B_R\setminus\overline{B_\rho},\\
       & \zeta_R =0 \qtext{on} \partial B_\rho,\\
        &\partial_\nu \zeta_R = 0 \qtext{on} \partial B_R.
    \end{cases}
\end{equation}
\begin{lemma}\label{lem:zetaR_existence}
 There exists $R_0^*$ such that for any $R_0 \geq R_0^*$, problem~\eqref{NzetaBall}
has a positive solution $\zeta_R$ which is maximum among solutions with values in 
$[0,1]$. Furthermore, $\zeta_R$ is spherically symmetric, 
and increasing with respect to $|x|$.
\end{lemma}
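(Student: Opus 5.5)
We will construct $\zeta_R$ by the method of sub- and supersolutions and monotone iteration, taking $\overline{\zeta}\equiv 1$ as supersolution. Indeed, $f(1)=0$, and $1\ge 0$ on $\partial B_\rho$, with zero normal derivative on $\partial B_R$. The key point is to build a nontrivial subsolution $\underline{\zeta}$ with $0\le\underline{\zeta}\le 1$, $\underline{\zeta}\not\equiv 0$, and $\max\underline{\zeta}>\theta$. This is where the largeness of $R_0$ and hypothesis (F4) enter.

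\textbf{Constructing the subsolution.} By (F4), there is a classical compactly supported ``bump'' subsolution. Extend $f$ by $f(s)=f(0)=0$ for $s<0$ if needed. For $L$ large, let $w$ solve $-d_u\Delta w=f(w)$ in a ball $B_L(a)$ with $w=0$ on $\partial B_L(a)$, with $w>0$ inside and $\max w$ close to $1$ (in particular $>\theta$). Such a $w$ exists for $L\ge L^*$ by maximizing the energy $\int \frac{d_u}{2}|\nabla w|^2-F(w)$ with $F(s)=\int_0^s f$. Since $F(1)>0$, a test function equal to $1$ on $B_{L-1}$ yields negative energy for $L$ large, so the minimizer is nontrivial and takes values in $[0,1]$. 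Alternatively, in radial coordinates, one can use an ODE shooting argument. Extending $w$ by $0$ gives a weak subsolution on $\mathbb R^N$, as the maximum of two subsolutions, $w$ and $0$. Choose $R_0^*=2L^*+1$. Then for $R_0\ge R_0^*$ the annulus contains a ball $B_{L^*}(a)$ with closure inside $B_R\setminus\overline{B_\rho}$, and $\underline{\zeta}:=w$ extended by zero is a subsolution of \eqref{NzetaBall}: it vanishes near both boundary components, so both boundary conditions hold trivially.

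\textbf{Existence, maximality, positivity.} Monotone iteration starting from $\overline{\zeta}=1$, using $-d_u\Delta\zeta+M\zeta=f(\zeta)+M\zeta$ with $M$ larger than the Lipschitz constant of $f$ on $[0,1]$, gives a decreasing sequence that converges to a solution $\zeta_R$ with $\underline{\zeta}\le\zeta_R\le 1$. This iteration is standard, and I will not grind through it. The same comparison argument applied to any solution $\zeta$ with values in $[0,1]$, which lies below $1$, shows inductively that every iterate stays above $\zeta$. Hence $\zeta_R$ is the maximal solution in $[0,1]$. Positivity in the open annulus follows from the strong maximum principle applied to $-d_u\Delta\zeta_R + c(x)\zeta_R=0$ with $c=-f(\zeta_R)/\zeta_R$ bounded, since $\zeta_R\ge 0$ and $\zeta_R\not\equiv 0$. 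Hopf's lemma handles the Neumann boundary.

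\textbf{Symmetry and monotonicity.} This is the main obstacle. Symmetry is easy from maximality. Rotations preserve the problem, so $\zeta_R\circ Q$ is also a maximal solution, which forces $\zeta_R\circ Q=\zeta_R$. For monotonicity in $r=|x|$, write the equation as $d_u(r^{N-1}\zeta')'=-r^{N-1}f(\zeta)$ with $\zeta(\rho)=0$ and $\zeta'(R)=0$. I would compare $\zeta_R$ with its running maximum $\tilde\zeta(r):=\max_{[\rho,r]}\zeta_R$. My first approach is a cleaner one: show that monotone iteration preserves monotonicity. Start from $1$, which is nondecreasing, and suppose $\zeta_k$ is radial and nondecreasing. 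Then $g=f(\zeta_k)+M\zeta_k$ is nondecreasing in $r$ on the range where $\zeta_k\in[0,1]$. The solution $\zeta_{k+1}$ of $-d_u\Delta\zeta+M\zeta=g$ with these boundary conditions is then nondecreasing. To see this, differentiate: $p=\zeta_{k+1}'$ satisfies $-d_u(p''+\frac{N-1}{r}p')+(M+\frac{d_u(N-1)}{r^2})p=g'\ge0$, with $p(R)=0$ and $p(\rho)\ge0$. The last condition holds because $\zeta_{k+1}\ge0$ and $\zeta_{k+1}(\rho)=0$. The maximum principle then gives $p\ge0$, and the limit $\zeta_R$ is nondecreasing. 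Strict increase follows by applying the strong maximum principle to $p$ for the limit equation, or directly from the ODE. If $\zeta'(r_0)=0$ at an interior point $r_0$, then integrating from $r_0$ to $R$ gives $\int_{r_0}^R r^{N-1}f(\zeta)\,dr=0$, which together with $\zeta'\ge0$ leads to a contradiction unless $\zeta$ is constant near $R$, and that is excluded by ODE uniqueness. The delicate point is the regularity needed for $g'$ when $f$ is only continuous. If necessary, I would use the smoothness of $f$ stated in the model assumptions.
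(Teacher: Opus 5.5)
Your proof is correct. The existence, maximality and symmetry parts match the paper's: a ball solution placed in the annulus and extended by $0$ serves as subsolution, the constant $1$ as supersolution, the maximal solution comes from iterating down from $1$, and rotational invariance follows from uniqueness of the maximal solution. The monotonicity step is where you genuinely take a different route.

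\textbf{The paper's route.} It works on the limit $\zeta_R$ itself. Suppose $\zeta_R'$ first vanishes at some $r_0$. The radial energy identity, with its dissipative term $d_u(N-1)\int (\zeta_R')^2/r$, makes $\zeta_R(r_0)$ a strict global maximum. Since $\zeta_R\ge V(\cdot-a)$ and $\max V>\theta'$, one has $f(\zeta_R(r_0))>0$. Hence $\zeta_R$ on $[\rho,r_0]$, glued to the constant $\zeta_R(r_0)$, is a generalized subsolution. Maximality then forces $\zeta_R\ge\zeta_R(r_0)$ beyond $r_0$, a contradiction.

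\textbf{Your route.} You show that each iterate is radially nondecreasing, by applying the maximum principle to $p=\zeta_{k+1}'$. Its zeroth-order coefficient $M+d_u(N-1)/r^2$ is positive, and its boundary data $p(\rho)\ge 0$, $p(R)=0$ come for free.

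\textbf{What each buys.} Your argument is more elementary. It shows that the maximal solution below $1$ is nondecreasing in $r$ regardless of the size of the subsolution; the bump only serves to make $\zeta_R\not\equiv 0$. The price is that you need $f\in C^1$ to differentiate $g=f(\zeta_k)+M\zeta_k$, which is legitimate here since $f$ is assumed smooth. The paper's argument avoids differentiating the nonlinearity. It also yields, as a byproduct, the bound $\max\zeta_R>\theta'$, which is used later: to identify the limit $z(\infty)=1$, and to make the constant $\zeta_R(R)$ a subsolution in Lemma~\ref{LemmaNzeta}. On your route you would obtain this bound separately, from $\zeta_R\ge w$ and the energy identity for $w$.

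\textbf{Two small slips.} First, the ball solution comes from minimizing the energy, not maximizing it. Second, your ``direct ODE'' alternative for strict monotonicity does not close as written: the identity $\int_{r_0}^R r^{N-1}f(\zeta)\,dr=0$ alone gives no contradiction. Your first option does work. At an interior zero $r_0$ of $p\ge 0$ one has $p'(r_0)=0$, so uniqueness for the linear ODE satisfied by $p$ gives $p\equiv 0$. That makes $\zeta_R$ constant, which is impossible.
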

\begin{proof}
    We recall that for a sufficiently large radius $\sigma>0$ , there exists a positive solution of the problem:
    \begin{equation}\label{semilinearBall}
    \begin{cases}
        &-d_u \Delta V \,=\, f(V), \quad V>0, \qtext{in} B_\sigma(0)
        \\
        &V\, = \, 0 \qtext{on}\partial B_\sigma (0),
    \end{cases}
\end{equation}
 (compare e.g.~\cite{Berestyckilions1980}). $V$ is spherically symmetric in $B_\sigma(0)$ and decreasing away from the center. For what follows, we set such a value $\sigma$. For  $R_0^*>2 \sigma$ one can fit a closed ball 
$\overline{B_\sigma(a)}$ of radius $\sigma$, centered at a point
$a\in B_R$ inside the annulus $B_R\setminus\ob$. Then, the function $\underline{u}(x)= V(x-a)$ in 
$B_\sigma(a)$, extended by $\underline{u}(x)= 0$ in the complement of $B_\sigma(a)$ in 
$B_R\setminus\ob$, is a subsolution of problem~\eqref{NzetaBall}. Since $\overline{u}\equiv 1$ is a supersolution, there exists a solution $\zeta_R$. Moreover, we can choose $\zeta_R$ to be the maximum solution of~\eqref{NzetaBall} below 1. Since it can be constructed by evolving the constant supersolution 1, we see that $\zeta_R$ is spherically symmetric.

We write interchangeably $\zeta_R (x) = \zeta_R (r)$ with $r=|x|$. Thus, $\zeta_R$ satisfies
\begin{equation}\label{Nzetaradial}
    \begin{cases}
        &-d_u \Delta \zeta_R= -d_u ( \zeta_R'' + \frac{N-1}{r} \zeta_R' ) = f(\zeta_R) \qtext{in} B_R\setminus\overline{B_\rho},\\
       & \zeta_R(\rho) =0 \qtext{on} \partial B_\rho,\\
       & \partial_\nu \zeta_R = \zeta_R'(R) =0 \qtext{on} \partial B_R.
    \end{cases}
\end{equation}
We know that $\zeta_R\geq V(\cdot -a)$ so that $\zeta_R\not\equiv 0$ and it then follows from the ODE above that $\zeta_R>0$ for $\rho< r \leq R$  and $\zeta_R'(\rho)>0$. We claim that $\zeta_R'(r)>0$ for all
$\rho\leq r < R$. Indeed, suppose not. Then, $\zeta_R'(r_0)=0$ for some $\rho < r_0< R$ and $\zeta_R'(r)>0$ for all $\rho\leq r < r_0$. Then $\zeta_R''(r_0)\leq 0$. If $\zeta_R''(r_0)=0$, then $\zeta_R$ is constant, so $\zeta_R'(\rho)=0$, a contradiction, so $\zeta_R''(r_0)>0$. In this case, $\zeta_R(r)<\zeta_R(r_0)$ for all $r\neq r_0$, for otherwise, for some $r_0<r_1\leq R$ we would have $\zeta_R(r_0)=\zeta_R(r_1)$, but from the equation (multiplied by $\zeta_R'$) we get:
\begin{equation*}
    -\frac{d_u}{2} \zeta_R'(r_1)^2 - d_u (N-1) \int_{r_0}^{r_1} \frac{\zeta_R'^2}{r}dr =0
\end{equation*}
which is impossible.  Then, $\zeta_R(r_0)$ is a global maximum of $\zeta_R$. 

From the ODE satisfied by the solution $V(r)= V(|x|)$ of~\eqref{semilinearBall}, we find that
\begin{equation*}
     F(V(0)):= \int_0^{V(0)} f(s) ds = \frac{d_u}{2} V'(\sigma)^2 + d_u (N-1) \int_0^\sigma \frac{V'(r)^2}{r}dr >0.
\end{equation*}
This entails that $V(0) > \theta'$ where $\theta<\theta'<1$ is defined by  $F(\theta')=0$.

Hence, we then know that $\zeta_R(r_0)> V(0)= \max V > \theta'$. In particular this shows that $f(\zeta_R(r_0))>0$. Thus, the constant $\zeta_R(r_0)$ is a subsolution of the elliptic equation. Setting
\begin{equation*}
    \xi(r) = \begin{cases}
        \zeta_R(r), \qtext{if} \rho\leq r \leq r_0,\\
        \zeta_R(r_0), \qtext{if} r_0\leq r\leq R,
    \end{cases}
\end{equation*}
we have a function of class $C^1$ at the interface $r=r_0$ which is a generalized subsolution (see~\cite{BerestyckiLions1978-subsupersolutions}). Since the supersolution 1 is above it, we know that the maximum solution $\zeta_R$ satisfies $\zeta_R \geq \xi$. Hence, $\zeta_R \geq \zeta_R(r_0)$ for $r_0\leq r \leq R$ which is a contradiction. We conclude that $\zeta_R'(r)>0$
for all $\rho\leq r < R$. 

Note that this proof also yields that $\max \zeta_R > \theta'$.
\end{proof}


\begin{lemma}\label{'lem:z_R_increasing}
The maximum $z_R$ of problem~\eqref{NzetaBall} satisfies:
\begin{itemize}
    \item[(i)] $R\mapsto z_R$ is increasing,
    \item[(ii)] $\lim_{R\to\infty} z_R(R) = 1,$
\end{itemize}
\end{lemma}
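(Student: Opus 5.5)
The plan for (i) is to compare $z_{R}$ with $z_{R'}$ for $R<R'$ by extending $z_R$ as a constant and using the maximality of $z_{R'}$, exactly as in the proof of Lemma~\ref{lem:zetaR_existence}. For (ii), I will pass to the monotone limit $R\to\infty$ and identify the behaviour at infinity of the limiting radial solution. Throughout, $\rho$ is fixed and $z_R=\zeta_R$ denotes the maximal solution in $[0,1]$. "Increasing in $R$" is understood as $z_R(r)\le z_{R'}(r)$ for $\rho\le r\le R<R'$, together with $z_R(R)< z_{R'}(R')$.

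For (i), fix $R^*_0\le R-\rho<R'-\rho$ and define
\begin{equation*}
    \xi(r)=\begin{cases} z_R(r), & \rho\le r\le R,\\ z_R(R), & R\le r\le R'.\end{cases}
\end{equation*}
By Lemma~\ref{lem:zetaR_existence}, $z_R$ is increasing in $r$, so $z_R(R)=\max z_R>\theta'>\theta$. Hence $f(z_R(R))>0$, and the constant $z_R(R)$ is a strict subsolution. Since $z_R'(R)=0$, the function $\xi$ is $C^1$ across $r=R$, so it is a generalized subsolution of~\eqref{NzetaBall} on $B_{R'}\setminus\ob$ in the sense of~\cite{BerestyckiLions1978-subsupersolutions}. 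It vanishes on $\partial B_\rho$, has zero normal derivative on $\partial B_{R'}$, and lies below the supersolution $1$. Maximality of $z_{R'}$ then gives $z_{R'}\ge\xi$. In particular $z_{R'}\ge z_R$ on $[\rho,R]$, and $z_{R'}(R')\ge z_{R'}(R)\ge z_R(R)$.

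Strictness follows because $\xi$ is not a solution on $(R,R']$, since there $f(\xi)>0$. Hence $z_{R'}\not\equiv\xi$. The strong maximum principle (or ODE uniqueness, using $z_{R'}'(R')=0$ and $z_{R'}$ strictly increasing) then yields $z_{R'}>\xi$ in the interior, and so $z_{R'}(R')>z_R(R)$.

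For (ii), by (i) the values $z_R(r)$ are nondecreasing in $R$ for each fixed $r>\rho$ and bounded by $1$. Interior elliptic (or ODE) estimates therefore give local $C^2$ convergence to a radial solution $z_\infty$ of $-d_u(z''+\frac{N-1}{r}z')=f(z)$ on $(\rho,\infty)$ with $z_\infty(\rho)=0$. The limit $z_\infty$ is nondecreasing in $r$ as a limit of increasing functions, so $\ell=\lim_{r\to\infty}z_\infty(r)$ exists.

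Set $L=\lim_R z_R(R)\in(\theta',1]$. The pointwise bound $z_R(r)\le z_R(R)\le L$ gives $\ell\le L$. Conversely, $z_\infty(r)\ge z_R(r)$ for $r\le R$ and each $R$, so $\ell\ge z_R(R)$ for all $R$, and hence $\ell=L$.

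It remains to show that $f(\ell)=0$. Suppose instead $f(\ell)\neq0$. Then for large $r$ we have $(r^{N-1}z_\infty')'=-r^{N-1}f(z_\infty)/d_u$, with the right-hand side of fixed sign and of size $\sim c\,r^{N-1}$. This forces $|z_\infty'|\gtrsim r$, contradicting boundedness. Since $\ell\ge z_R(R)>\theta'>\theta$, assumption (F2) then forces $\ell=1$, which is (ii).

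I expect the main obstacle to be this last step: justifying the local convergence (routine) and especially the argument that a bounded monotone radial solution must tend to a zero of $f$. I would handle the latter with the integrated form of the radial ODE above, as sketched.
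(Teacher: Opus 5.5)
Your proof is correct and follows essentially the same route as the paper. For (i) you use the same $C^1$ glued subsolution $\xi$ (equal to $z_R$ on $[\rho,R]$ and to the constant $z_R(R)>\theta'$ beyond) together with maximality of $z_{R'}$; for (ii) you take the monotone limit $z_\infty$ on $\mathbb{R}^N\setminus\overline{B_\rho}$, whose value at infinity must be a zero of $f$ exceeding $\theta'$ and hence equals $1$, and you also supply two steps the paper leaves implicit: the integrated radial ODE argument giving $f(z_\infty(\infty))=0$, and the identification $\lim_{R\to\infty} z_R(R)=z_\infty(\infty)$.
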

\begin{proof}
Let $\rho+R_0^* \leq R < R'$ and let $\zeta_R, \zeta_{R'}$ the corresponding maximum solutions. As in the proof above, we define
\begin{equation*}
    \xi(r) = \begin{cases}
        \zeta_R(r), \qtext{if} \rho\leq r \leq R,\\
        \zeta_R(R), \qtext{if} R\leq r\leq R'.
    \end{cases}
\end{equation*}
This is a subsolution in $B_{R'}\setminus \ob$ and therefore, the maximum solution $\zeta_{R'}$ is above it. Hence, $\zeta_R < \zeta_{R'}$ in $B_R$ which proves (i). 

To prove (ii), we observe that $\zeta_R(x)\nearrow z(x)$ as $R\nearrow\infty$, for all $x, |x|\geq \rho$. We know that $z$ is a solution of
\begin{equation*}
\begin{cases}
    -d_u \Delta z = f(z), \quad z>0 \qtext{in} \R^N\setminus\ob,\\
    z=0 \qtext{on} \partial B_{\rho}.
\end{cases}
\end{equation*}
Furthermore, $z(x)=z(|x|)$ is spherically symmetric, $z(\rho)=0$ and $z>0, z'\geq 0$ for all $\rho <r$. Clearly we must have $f(z(\infty))=0$, and since we know that 
$z(\infty)> \theta'$, we infer that $z(\infty)=1$. This yields the limit in (ii).
\end{proof}

As a consequence of this lemma, we infer the following.
\begin{corollary}\label{ZetaReta}
   For any $0<\eta< 1-\theta$, there exists $R_0=R_0(\eta)\geq R_0^*$ such that for any $R \geq \rho+R_0$, we have
$\zeta_R > 1- \eta$ on $\partial B_R$. 
\end{corollary}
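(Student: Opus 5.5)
The corollary is a direct consequence of Lemma~\ref{'lem:z_R_increasing}: on $\partial B_R$ the function $\zeta_R$ takes the single value $z_R(R)=\zeta_R(R)$, which tends to $1$. The only point needing care is that $\rho$ is fixed, so the threshold on $R$ can be written as $R_0 = R-\rho$, with $R_0$ depending only on $\eta$ (and on $\rho$, $f$, $d_u$, $N$) but not on $A$ or $\Omega$.

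\textbf{Step 1: monotonicity of the boundary value.} By Lemma~\ref{lem:zetaR_existence}, for $R\geq \rho+R_0^*$ the maximal solution $\zeta_R$ is radial and increasing in $|x|$. Hence $\max_{\overline{B_R}\setminus B_\rho}\zeta_R=\zeta_R(R)$, which is the common value of $\zeta_R$ on $\partial B_R$. By part (i) of Lemma~\ref{'lem:z_R_increasing}, for $R<R'$ we have $\zeta_R<\zeta_{R'}$ on $B_R\setminus\overline{B_\rho}$. Combining this with the radial monotonicity of $\zeta_{R'}$ gives
\[
\zeta_R(R)\le \zeta_{R'}(R)\le \zeta_{R'}(R').
\]
The first inequality follows by continuity up to $r=R$, or directly from the subsolution comparison $\xi\le\zeta_{R'}$ used in that proof. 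Thus $R\mapsto \zeta_R(R)$ is nondecreasing.

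\textbf{Step 2: choice of threshold.} By part (ii) of Lemma~\ref{'lem:z_R_increasing}, $\zeta_R(R)\to 1$ as $R\to\infty$. Given $\eta\in(0,1-\theta)$, choose $R_1\ge \rho+R_0^*$ with $\zeta_{R_1}(R_1)>1-\eta$, and set $R_0(\eta):=R_1-\rho\ge R_0^*$. By Step 1, every $R\ge\rho+R_0(\eta)$ satisfies $\zeta_R(R)\ge\zeta_{R_1}(R_1)>1-\eta$, which is the claim on $\partial B_R$.

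\textbf{Main obstacle.} There is no real difficulty; the only subtlety is the dependence of $R_0$ on $\rho$. Since the problem is posed on the annulus $B_R\setminus\overline{B_\rho}$, the threshold a priori depends on $\rho$. If uniformity in $\rho$ is needed later, I would argue as follows. Shrinking $\rho$ to any smaller radius enlarges the domain with the same zero Dirichlet condition, and the maximal solution there dominates (by the same generalized subsolution comparison). This gives monotonicity in $\rho$ at fixed gap $R_0$, which would bound the threshold on compact $\rho$-ranges. For the corollary as stated, with $\rho$ fixed, Steps 1--2 suffice.
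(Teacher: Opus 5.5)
Your proof is correct and is exactly the argument the paper intends when it states the corollary as an immediate consequence of Lemma~\ref{'lem:z_R_increasing}. For fixed $\rho$, part (i) together with the radial monotonicity from Lemma~\ref{lem:zetaR_existence} makes $R\mapsto\zeta_R(R)$ nondecreasing, and part (ii) supplies the threshold.

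One caveat on your optional aside about uniformity in $\rho$: at fixed gap $R_0$ the annuli $\{\rho'<|x|<\rho'+R_0\}$ and $\{\rho<|x|<\rho+R_0\}$ are not nested, so domain enlargement does not give monotonicity in $\rho$. The right comparison works in the variable $s=r-\rho$, where an increasing profile stays a subsolution whenever the drift coefficient $\frac{N-1}{\rho+s}$ is increased. Consequently the one-dimensional profile $\zeta_{0,R_0}$ of Proposition~\ref{prop:subsolzet}, placed at $r=\rho+s$, lies below $\zeta_R$ for every $\rho$, and the threshold is in fact uniform in $\rho$.
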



\subsection*{Step 2. Auxiliary function in $\Omega\setminus \overline{B_\rho}$}
We now derive the existence of a solution $\zeta$ in the domain $\Omega\setminus\ob$   of the following problem:
\begin{equation}\label{NzetaOmega}
    \begin{cases}
        &-d_u \Delta \zeta = f(\zeta) \qtext{in} \Omega\setminus\overline{B_\rho},\\
       & \zeta =0 \qtext{on} \partial B_\rho,\\
        &\partial_\nu \zeta = 0 \qtext{on} \partial\Omega.
    \end{cases}
\end{equation}

\begin{lemma}\label{LemmaNzeta}  
    For any $0<\eta< 1- \theta$, let $R_0= R_0(\eta)$ be given by Corollary~\ref{ZetaReta}. Under the assumption that $B_\rho \subset B_{\rho+R_0} \subset \Omega$, there exists a maximum positive solution $\zeta$ of problem~\eqref{NzetaOmega} that satisfies $\zeta \geq \zeta_R$ in $B_R$ and 
    $\zeta \geq 1-\eta$ in $\Omega\setminus B_R$.
\end{lemma}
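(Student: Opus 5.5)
We build a subsolution on $\Omega\setminus\ob$ from $\zeta_R$ and then run the same sub/supersolution and maximal-solution argument used in Lemma~\ref{lem:zetaR_existence}. Let $R=\rho+R_0$ with $R_0=R_0(\eta)$ from Corollary~\ref{ZetaReta}, so that $\zeta_R>1-\eta$ on $\partial B_R$. By Lemma~\ref{lem:zetaR_existence}, $\zeta_R$ is radially increasing, so $\zeta_R(R)=\max\zeta_R>1-\eta>\theta$, and hence $f(\zeta_R(R))>0$ (by (F3), taking $\eta<1-\theta$; if $\zeta_R(R)=1$ the argument is the same). Define
\begin{equation*}
    \underline{\zeta}(x)=\begin{cases}
        \zeta_R(x), & x\in B_R\setminus\ob,\\
        \zeta_R(R), & x\in \Omega\setminus B_R.
    \end{cases}
\end{equation*}
This is the same gluing used in the proof of Lemma~\ref{'lem:z_R_increasing}(i).

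The function $\underline\zeta$ is continuous and $C^1$ across $\partial B_R$, since $\zeta_R'(R)=0$. In $B_R\setminus\ob$ it solves the equation. In $\Omega\setminus\overline{B_R}$ it is a constant $c$ with $-d_u\Delta c=0\le f(c)$, so it is a subsolution there. By the theory of generalized subsolutions~\cite{BerestyckiLions1978-subsupersolutions}, a $C^1$ function that is piecewise a subsolution is a weak subsolution on all of $\Omega\setminus\ob$. The boundary conditions also hold: $\underline\zeta=0$ on $\partial B_\rho$, and $\partial_\nu\underline\zeta=0\le 0$ on $\partial\Omega$ because $\underline\zeta$ is constant near $\partial\Omega$. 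Here we use $\overline{B_R}\subset\Omega$; if $\partial B_R$ touches $\partial\Omega$, the Neumann condition of $\zeta_R$ on $\partial B_R$ still gives zero flux.

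The constant $\overline\zeta\equiv1$ is a supersolution lying above $\underline\zeta$. The monotone iteration method then gives a solution between them. To obtain the \emph{maximum} solution with values in $[0,1]$, evolve the parabolic problem from the initial datum $1$. The solution is nonincreasing in time and converges to a solution $\zeta$ that lies above every solution with values in $[0,1]$, by comparison. In particular $\zeta\ge\underline\zeta$, which gives $\zeta\ge\zeta_R$ in $B_R$ and $\zeta\ge\zeta_R(R)>1-\eta$ in $\Omega\setminus B_R$. Positivity in $\Omega\setminus\ob$ follows from $\zeta\ge\underline\zeta>0$ away from $\partial B_\rho$, together with the strong maximum principle near $\partial B_\rho$.

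The main obstacle is justifying that the glued function is a subsolution on a general domain $\Omega$ with mixed Dirichlet--Neumann conditions. The key ingredients are the $C^1$ matching across $\partial B_R$ (supplied by $\zeta_R'(R)=0$) and the sign $f(\zeta_R(R))>0$ (supplied by Corollary~\ref{ZetaReta}). Given these, the weak formulation with nonnegative test functions is checked by integrating by parts on each piece, and the interface terms cancel.
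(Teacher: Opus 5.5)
Your proposal is correct and follows the paper's proof: glue $\zeta_R$ on the annulus to the constant $\zeta_R(R)$ on $\Omega\setminus B_R$, use the $C^1$ matching from $\zeta_R'(R)=0$ and the sign $f(\zeta_R(R))>0$ to get a generalized subsolution lying below the supersolution $1$, and conclude existence with the stated lower bounds. You also spell out the maximality (via the flow from initial datum $1$) and the weak-formulation check at the interface, which the paper leaves implicit.
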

\begin{proof}
    We make use of the same type of construction of a generalized subsolution as before. We set:
    \begin{equation*}
    \xi(x) = \begin{cases}
        \zeta_R(r), \qtext{if} \rho\leq r=|x| \leq R,\\
        \zeta_R(R), \qtext{if} x\in\Omega\setminus B_R.
    \end{cases}
\end{equation*}
Note that $\xi$ is of class $C^1$ (but not $C^2)$.  Since $\zeta_R(R)> \theta'>\theta$, the constant $\zeta_R(R)$ is a subsolution. Therefore, the function $\xi$ is a subsolution. Since 1 is a supersolution and $\xi<1$, we derive the existence of $\zeta>0$ solving problem~\eqref{NzetaOmega}. By construction we have
$\zeta(x) > \zeta_R(R)> 1-\eta$ in $\Omega\setminus B_R$.
\end{proof}

\subsection*{Step 3. Solutions in all of $\Omega$}
Let  us now consider the Neumann problem for the semilinear elliptic equation in all of $\Omega$:
\begin{equation}\label{NeumannOmega}
    \begin{cases}
        &-d_u \Delta u = f(u) \qtext{in} \Omega,\\
        &\partial_\nu u = 0 \qtext{on} \partial\Omega.
    \end{cases}
\end{equation}
The next lemma shows that only the solution  $u\equiv 1$ of~\eqref{NeumannOmega} lies above the auxiliary function $\zeta$. It plays a key role in the proof of Theorem~\ref{thm:existenceND}.

\begin{lemma}\label{lem:u=1}
    Let $\eta>0$ be such that $V(0)= \max V < 1-\eta$ and let $R_0= R_0(\eta)$ be given by Corollary~\ref{ZetaReta}, where $V$ is the solution of equation~\eqref{semilinearBall}. Suppose that $B_\rho \subset B_{\rho+R_0} \subset \Omega$, and let $\zeta$ be the auxiliary function given by Lemma~\ref{LemmaNzeta}. Let $u$ be a solution of~\eqref{NeumannOmega} in all of $\Omega$, such that $u\geq \zeta$ in $\Omega\setminus\ob$. Then, 
    $u\equiv 1$.
\end{lemma}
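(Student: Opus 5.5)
The plan is to show that $u>\theta$ everywhere in $\Omega$ and then conclude by integrating the equation. As throughout the paper, I take $0\le u\le 1$. The lower bound $u>\theta$ is clear on $\Omega\setminus B_R$, where $u\ge\zeta\ge 1-\eta>V(0)>\theta'>\theta$ by Lemma~\ref{LemmaNzeta}. The work is inside $B_R$, and in particular near and inside $B_\rho$, where $\zeta$ gives no information. There I will use a sliding argument with the translates $V(\cdot-a)$ of the ball solution of~\eqref{semilinearBall}, whose radius $\sigma$ satisfies $R_0>2\sigma$.

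\textbf{Step 1: positivity and a starting position.} First, $u>0$ in $\overline\Omega$. Indeed $u\ge0$, and $u\not\equiv0$ since $u\ge\zeta>0$ outside $\ob$. Writing $f(u)=c(x)u$ with $c$ bounded (because $f(0)=0$ and $f$ is smooth), the strong maximum principle and the Hopf lemma give $u>0$. Next, choose a center $a_0$ with $\overline{B_\sigma(a_0)}\subset B_R\setminus\ob$. Then $V(\cdot-a_0)$, extended by $0$, is a subsolution of~\eqref{NzetaBall}. Since $\zeta_R$ is the maximal solution below $1$, we get $\zeta_R\ge V(\cdot-a_0)$. Hence $u\ge\zeta\ge\zeta_R\ge V(\cdot-a_0)$ in $B_\sigma(a_0)$.

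\textbf{Step 2: sliding.} Let $a$ move continuously from $a_0$ to any point of $B_{R-\sigma}$ along a path inside $B_{R-\sigma}$, so that $\overline{B_\sigma(a)}\subset B_R\subset\Omega$ throughout. I claim that $u>V(\cdot-a)$ in $\overline{B_\sigma(a)}$ for every $a$ on the path. Strict inequality holds at $a_0$: on $\partial B_\sigma(a_0)$ we have $u>0=V$, and in the interior equality at some point would force $u\equiv V(\cdot-a_0)$ by the strong maximum principle applied to the difference of two solutions, which is impossible. The set of parameters where strict inequality holds is open by continuity and compactness. If it were not the whole path, there would be a first parameter with $u\ge V(\cdot-a)$ and a contact point $x^*$. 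The point $x^*$ cannot lie on $\partial B_\sigma(a)$, since there $u>0=V$. An interior contact is excluded by the strong maximum principle, because $w=u-V(\cdot-a)\ge0$ solves a linear equation $-d_u\Delta w=c(x)w$ with bounded $c$ and vanishes at $x^*$, so $w\equiv0$, contradicting the boundary values.

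\textbf{Step 3: the lower bound on $B_R$.} Every $x\in B_R$ lies in some $B_\sigma(a)$ with $a\in B_{R-\sigma}$, and it can in fact be taken as the center whenever $|x|<R-\sigma$. So on $B_{R-\sigma}$ we get $u(x)>V(0)>\theta'$. For $R-\sigma\le|x|<R$, note that $|x|\ge R-\sigma>\rho+\sigma$ because $R_0>2\sigma$. Hence $u(x)\ge\zeta_R(|x|)\ge\zeta_R(\rho+\sigma)\ge V(0)$, using that $\zeta_R$ is increasing and lies above a translate of $V$ centered at radius $\rho+\sigma$. Combined with the bound on $\Omega\setminus B_R$, this gives $\theta<u\le1$ in $\overline\Omega$.

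\textbf{Step 4: conclusion.} Integrating~\eqref{NeumannOmega} over $\Omega$ and using the Neumann condition gives $\int_\Omega f(u)\,dx=0$. By (F3), $f(u)\ge0$ since $u\in(\theta,1]$, so $f(u)\equiv0$. The only root of $f$ in $(\theta,1]$ is $1$, hence $u\equiv1$.

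The main obstacle is Step 2, the sliding argument. One has to check carefully that all the balls $B_\sigma(a)$ stay inside $\Omega$, which holds because they stay inside $B_R$. One also needs the boundary strictness $u>0=V$, which is exactly what rules out contact on $\partial B_\sigma(a)$. The use of the maximality of $\zeta_R$ to start the sliding is the other point requiring care.
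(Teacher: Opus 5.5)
Your proof is correct and is essentially the paper's argument: you slide translates $V(\cdot-a)$ of the ball solution, starting inside the annulus $B_R\setminus\overline{B_\rho}$ where $u\ge\zeta\ge\zeta_R\ge V(\cdot-a_0)$, to get $u\ge\max V>\theta'$ on $B_R$, then combine this with $u\ge 1-\eta$ outside $B_R$ to conclude $u\equiv 1$. The differences are only technical: you keep the sliding balls inside $B_R$ and cover the shell $R-\sigma\le|x|<R$ by monotonicity of $\zeta_R$ (most simply $\zeta_R(|x|)\ge\zeta_R(|a_0|)\ge V(0)$, since $|a_0|<R-\sigma$), whereas the paper lets the balls leave $B_R$ and rules out contact on $\partial B_R$ via $u\ge 1-\eta>\max V$; and you finish by integrating the equation ($\int_\Omega f(u)=0$ with $f(u)\ge 0$) instead of applying the strong maximum principle and Hopf lemma to the superharmonic $u$.
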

\begin{proof}
The maximum principle shows that $u>0$ in $\overline{\Omega}$. 
 We know that $\zeta_R \geq 
 V(\cdot -a)$, for some $a\in B_R$ such that $B_\sigma(a)\subset B_R\setminus\ob$. Let $m\in (\theta', 1)$ denote the maximum value of $V$: 
 $m= V(0)= \max V$.

 We now apply the {\em sliding method} as in~\cite{BerestyckiNirenberg1991} to show that as we
 shift the subsolution $V(\cdot -a)$ by moving around $a$, it keeps remaining below $u$ in $B_R$. 
 More precisely, we claim that $u(b) > m$ for an arbitrary point $b\in B_R$. To this end, we extend $V$ by $0$ outside 
 $B_\sigma(0)$, let $a_s= (1-s)a + sb$ and set:
 \[
 s^*= \max\; \{ \; \overline{s}\in [0,1]\,; \, u(x) \geq
V(x - a_s) \qtext{for all} x\in B_R, \;
s\in[0, \overline{t}]\; \}.
\]
We know that $t^*\geq 0$. Note that the segment from $a$ to $b$ lies in the ball $B_R$ but may cross the ball $B_\rho$, and the sliding ball $B_\sigma(a_s)$ may have portions outside $B_R$.
Nonetheless, we claim that $s^*=1$.
For if not, for some $s^*\in[0,1)$,  we would have: 
\begin{equation*}
   u(x_0) - V(x_0-a_{s^*})=  \min \, \{ \, u(x) - V(x-a_{s^*}) \,; \, x\in B_R \,\}\, =\, 0,
\end{equation*}
for some $x_0\in \overline{B_R}$. 
On $\partial B_R$, by construction
we have $u \geq 1-\eta > \max V$. We also know that $u(x_0)>0$.
Therefore, $x_0$ must be in $B_R \cap B_{\sigma} (a_{s^*})$ and it is thus an interior minimum.  
Since both $u$ and $V$ satisfy the same semi-linear elliptic equation, the strong maximum principle (or positivity property) shows that $u\equiv V$ which is impossible.

Hence, it must be the case that $s^*=1$, whence it follows that $u(b)\geq m= \max V$. (Actually, the proof shows that $u(b)> m$.) Since $b$ is arbitrary, we have proved that $\min_{B_R} u \geq m$. But we also know that $u\geq 1-\eta >m$ in 
$\Omega\setminus B_R$. Together these inequalities show that $u\geq m$ in all of $\overline{\Omega}$. This entails that 
$-\Delta u \geq 0$ in $\Omega$. The function $u$ reaches its minimum at some point $x_0\in\overline{\Omega}$. By the strong maximum principle (in the interior or at the boundary) we derive that $u$ is constant and since $u\geq m > \theta$, this can only happen when $u\equiv 1$. The proof of the Lemma is thereby complete.
\end{proof}

\subsection*{Step 4. Formulation as a fixed point equation.} 

We formulate the system as a fixed point equation. For $u\in \cuo$ we define
$\lambda= \lambda[u]$ and $\phi=\phi[u]$ as the principal eigenvalue and eigenfunction of the problem:
\begin{equation}\label{eq:Nv-equ}
		\begin{cases}
			-d_v \Delta \phi - (\alpha u - \gamma)\phi =\lambda[u] \phi,  &\qtext{in} A,\\
			\partial_\nu \phi=0, &\qtext{on} \partial A,
		\end{cases}
	\end{equation}
with the normalization condition 
\begin{equation*}
    \phi>0\qtext{in} A, \qtext{and} \max_{x\in \overline{A}} \phi(x) =1.
\end{equation*}
The second equation of the system~\eqref{statND} writes $\lambda[u]= 0$ with $v= s \phi[u]$ for some $s>0$.

We formulate the problem with unknowns $(u,s)$ in the function space
$
\mathcal{E}:= C^1(\overline{\Omega}) \times \mathbb{R}.
$
Let $K$ be a positive constant chosen appropriately large so that $z\mapsto f(z)+Kz -\beta S z$ is increasing over the interval $z\in [0,1]$ where the choice of the value of  
the constant $S$ will be made precise below.
For $(u,s)\in\mathcal{E}$, we define $w:= \mathcal{F}(u,s)$ as the unique solution of
\begin{equation}\label{eq:Nu-equ}
		\begin{cases}
-d_u \Delta w +K w  = f(u) + Ku - \beta \mathbb{1}_A s \phi[u] u & x\in \Omega,\\
			\partial_\nu w= 0 \qtext{on}
            \partial\Omega.&
		\end{cases}
	\end{equation}
With these mappings, system~\eqref{eq:stat1D} is equivalent to the system with unknowns $(u,s)$:
\begin{equation}\label{FPN}
\begin{cases}
    & u= \mathcal{F}(u,s), \\
    &\lambda[u] = 0.
\end{cases}
\end{equation}
The solution is then given as the pair $(u, v= s\phi[u])$ with $s\geq0$. By defining
\[
\mathcal              {G}(u,s):= (\mathcal{F}(u,s), s -\lambda[u]),
\]
the problem then reads as a fixed point equation for the operator 
$\mathcal{G}$.


We look for positive solutions in the subset
\begin{equation*}
\mathcal{D}= D\times (0, S), \quad \text{with}\quad D:=  \{ u\in \cuo \, ; \;\; \zeta < u < 1 \; \text{ in } \;\clo \}, 
\end{equation*}
which is an open subset of $\mathcal{E}$. 
The function $\zeta$ comes from Lemma~\ref{LemmaNzeta}, and is 
 extended to all of $\clo$ by setting $\zeta (x) = 0$ for all $x\in B_\rho$. The positive constant $S$ will be chosen appropriately large later.

\subsection*{Step 5. Topological degree}
We set
\[
H(u,s):= (u- \mathcal{F}(u,s), \lambda[u]), \quad\text{for}\quad (u,s)\in\mathcal{D}.
\]
We will prove the existence of a solution of system~\eqref{statND}, that is, of system~\eqref{FPN}, in $\mathcal{D}$ by proving that the Leray-Schauder degree of $H$ in $\mathcal{D}$ with target point $(0,0)$ is non-zero.  

First, we note that
$H$ is a compact perturbation of the identity in $\mathcal{E}$, that is, $\mathcal{F}$ is a compact mapping:
$\mathcal{E} \rightarrow \cuo$, and $\lambda$ is continuous and bounded on bounded sets in the space $\cuo$.
We will show that the Leray-Schauder topological degree
$ d(H, \mathcal{D}, (0,0)) $
 is well defined as a consequence of a more general result regarding a one-parameter family of operators. 

Let $\sigma$ be a $C^1$ function defined on $\R^+$ such that $\sigma' < 0$, on $(0, S)$, $\sigma(0)=1$ and $\sigma(s)=0$ for all $s\geq S$ where $S$ enters the definition of $\mathcal{D}$. We consider the following homotopy: 
\begin{equation*}
    H_\tau(u,s) \,:= \, \left(\, u - \mathcal{F} (u,s)\, ,
\,     \lambda[(1-\tau)u + \tau \sigma(s)] \,
    \right), \quad \tau\in[0,1].
\end{equation*}
Thus, we have $H_0= H$. The mapping $H_\tau$ is a compact perturbation of the identity. In fact, more precisely, it can be written in the form $H_\tau= \mathrm{Id}_{|\mathcal{E}}- \mathfrak{F}_\tau$ with $\mathfrak{F}$ a continuous mapping from $[0,1]$ into the space of compact operators on $\overline{D}$.


The following a priori information on possible solutions of $H_\tau(u,s)=(0,0)$ is at the core of the proof.
\begin{proposition}
 \label{prop:homotopy}
For all $\tau\in[0,1]$, there is no solution of 
$H_\tau(u,s)= (0,0)$ on $\partial\mathcal{D}$.
\end{proposition}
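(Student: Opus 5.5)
The plan is to show that a zero $(u,s)$ of $H_\tau$ lying in $\overline{\mathcal D}$ cannot sit on any of the four pieces of $\partial\mathcal D$. These pieces are: $u$ touches $1$ somewhere; $u$ touches $\zeta$ somewhere; $s=0$; or $s=S$. First I record what a zero means. The relation $u=\mathcal F(u,s)$ is equivalent to
\[
-d_u\Delta u=f(u)-\beta\mathbb 1_A s\,\phi[u]\,u \quad\text{in }\Omega,\qquad \partial_\nu u=0 \quad\text{on }\partial\Omega,
\]
and the second component requires $\lambda[(1-\tau)u+\tau\sigma(s)]=0$. Since $u\ge\zeta$ and $\zeta\not\equiv0$, the function $u$ solves a linear equation $-d_u\Delta u+c(x)u=0$ with bounded $c$ and $u\ge 0$. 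The strong maximum principle and the Hopf lemma at $\partial\Omega$ then give $u>0$ on $\clo$.

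\emph{Case $s=0$.} Here $u$ solves~\eqref{NeumannOmega} with $u\ge\zeta$ in $\Omega\setminus\ob$. By Lemma~\ref{lem:u=1} (with $R_0$ chosen as there), $u\equiv1$. Since $\sigma(0)=1$, the second component becomes $\lambda[(1-\tau)+\tau]=\lambda[1]=\gamma-\alpha<0$, because $\gamma<\alpha$. This is a contradiction. This is precisely why the homotopy uses $\sigma$ with $\sigma(0)=1$.

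\emph{Case $u$ touches $1$.} The function $w=1-u\ge0$ satisfies $-d_u\Delta w+c(x)w=\beta\mathbb 1_A s\phi u\ge0$, using $f(1)=0$ and a bounded difference quotient $c$. If $w$ vanishes at some point, including a point of $\partial\Omega$ via Hopf, then $w\equiv0$. This forces $s\phi u=0$ in $A$, hence $s=0$, which reduces to the previous case.

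\emph{Case $u$ touches $\zeta$.} In $B_\rho$ we have $\zeta=0<u$, and on $\partial B_\rho$ also $\zeta=0<u$. So a contact point lies in $\clo\setminus\ob$. There $A\cap(\Omega\setminus\ob)=\emptyset$, so $u$ and $\zeta$ solve the same equation $-d_u\Delta\,\cdot=f(\cdot)$ with the same Neumann condition on $\partial\Omega$. Then $u-\zeta\ge0$ satisfies a linear equation with bounded coefficient. An interior zero (strong maximum principle) or a zero on $\partial\Omega$ (Hopf lemma) forces $u\equiv\zeta$ on the connected component of $\Omega\setminus\ob$ containing it. For the component adjacent to $\partial B_\rho$ this contradicts $u>0=\zeta$ on $\partial B_\rho$. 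If $\Omega\setminus\ob$ were disconnected, I would handle the remaining components through the construction of $\zeta$ in Lemma~\ref{LemmaNzeta}, where $\zeta$ is the maximal solution above the constant subsolution $\zeta_R(R)$. The cleanest option is to assume connectedness, which holds in the geometry of the theorem.

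\emph{Case $s=S$.} This is the step that fixes $S$, and I expect it to be the main obstacle. Since $\sigma(S)=0$, we need $\lambda[(1-\tau)u]=0$. If $\tau=1$, then $\lambda[0]=\gamma>0$, so this is impossible. Otherwise, let $\psi$ be the corresponding principal eigenfunction. Integrating its equation over $A$ with the Neumann condition gives $\int_A(\alpha(1-\tau)u-\gamma)\psi=0$. I would use this to get a lower bound on the amount of prey in $A$.

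The bound on $S$ itself comes from integrating the $u$-equation over $\Omega$:
\[
\beta S\int_A\phi[u]\,u=\int_\Omega f(u)\le |\Omega|\max_{[0,1]}f.
\]
So I need a uniform lower bound on $\int_A\phi[u]u$. For the factor $\phi[u]$, I would use the boundary Harnack inequality for Neumann problems with potentials bounded by $\alpha+\gamma+|\lambda|$, together with $\max\phi=1$. This gives $\phi[u]\ge c_0>0$ on $A$ uniformly in $u\in\overline D$, treating each component of $A$ if $A$ is not connected. For the factor $u$ in $A$, I would apply the same Harnack argument to $u$, which solves a linear equation with coefficient bounded in terms of $S$. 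A cleaner route, to be tried first, is the eigenvalue identity: it gives $\alpha\int_A u\psi\ge\gamma\int_A\psi$, and by Harnack $\psi$ and $\phi[u]$ are uniformly comparable, so $\int_A u\,\phi[u]\ge c_1>0$ with $c_1$ depending only on $\alpha,\gamma,d_v,A$ and not on $S$. Choosing $S>|\Omega|\max f/(\beta c_1)$ then gives the contradiction. The delicate point is to make sure $c_1$ does not depend on $S$; this is why I would use the eigenvalue identity, which involves only $A$-quantities, rather than bounds derived from the $u$-equation.
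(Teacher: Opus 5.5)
Your proof is correct. For positivity, contact with $1$, contact with $\zeta$, and $s=0$ it matches the paper's Cases 1--4. Your worry about components is unnecessary: since $\Omega$ is connected and $\overline{B_\rho}\subset\Omega$ is compact, every component of $\Omega\setminus\overline{B_\rho}$ has a boundary point on $\partial B_\rho$. So $u\equiv\zeta$ on any component already contradicts $u>0$ there.

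The case $s=S$ is where you take a genuinely different route, and it is also correct. The paper proves Lemma~\ref{lem:S} as follows. Since $\phi[u]\ge\eta$, a large $s$ gives $-\Delta u+\kappa u\le 0$ in $A$. Comparison with the boundary-layer solution of \eqref{eq:kappa} then makes $u$ small away from $\partial A$, and the Rayleigh identity \eqref{eq:Rayleigh} gives $\lambda[u]\ge\gamma_0>0$. To conclude for $H_\tau$, the paper also needs, tacitly, that $\lambda[(1-\tau)u]\ge\lambda[u]$.

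You instead combine two integral identities:
\begin{itemize}
\item the prey equation integrated over $\Omega$ gives $\beta S\int_A\phi[u]u=\int_\Omega f(u)\le|\Omega|\max f$;
\item the eigen-equation for $\psi=\phi[(1-\tau)u]$ integrated over $A$ gives $\alpha(1-\tau)\int_A u\psi=\gamma\int_A\psi$.
\end{itemize}
Lemma~\ref{phiboundbelowN} puts both $\phi[u]$ and $\psi$ in $[\eta,1]$; it applies because $0\le(1-\tau)u\le 1$, so you need no separate Harnack argument. This gives $\int_A u\,\phi[u]\ge\eta\int_A u\psi\ge\eta^2\gamma|A|/\alpha$, hence the explicit threshold $S>\alpha|\Omega|\max f/(\beta\gamma\eta^2|A|)$.

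Your argument is shorter, treats all $\tau$ at once without the monotonicity of $\lambda$, and avoids Lemma~\ref{lem:boundarylayer}. Its only cost is that $S$ depends on $|\Omega|$, which is harmless because $S$ is an auxiliary parameter for fixed $A$ and $\Omega$. The paper's $S$ depends only on data attached to $A$. Its lemma also carries extra qualitative information: heavy predation depletes the prey in the interior of $A$ and pushes $\lambda[u]$ toward $\gamma$.
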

\begin{proof}
 

The pair $(u,s)$ is a solution of $H_\tau(u,s)= (0, 0)$ if and only if it satisfies

\begin{equation}
\label{eq:u-equ1}
		\begin{cases}
-d_u \Delta u  = f(u) - \beta \mathbb{1}_A s \phi[u] u  \qtext{in} \Omega,\\
			\partial_\nu u= 0 \qtext{on}
            \partial\Omega
		\end{cases}
\end{equation}
together with
\begin{equation}\label{eq:u-equ2}
  h_\tau(u,s) := \lambda[(1-\tau)u + \tau \sigma(s)]=0.
\end{equation}
\vspace{.05cm}

Suppose there is a pair $(u,s)\in\partial D$ which is a solution of $H_\tau(u,s)= (0,0)$ for some 
$\tau\in[0,1]$, that is, $(u,s)$ is a solution of~\eqref{eq:u-equ1}--\eqref{eq:u-equ2}.
We know that $\zeta\leq u \leq 1$ in $\Omega$ and $0\leq s \leq S$.
We need to rule out the various cases that exhaust the description of the boundary of $\mathcal{D}$.

\vspace{.2cm}
{\bf Case 1.} $u(x_0)=0$ for some $x_0\in\overline{\Omega}$.
In $\Omega$, $u$ solves a linear elliptic equation $-\Delta u + q(x) u =0 $ for some bounded $L^{\infty}$ function $q$. By the Agmon-Douglis-Nirenberg estimate,
$u$ is in the Sobolev space $W^{2,p}(\Omega)$ for all $p>1$. Since $u\geq 0$ in $\Omega$, by the strong maximum principle in $W^{2,p}$ with $p>N$ (which holds regardless of the sign of $q$), we get $u\equiv 0$ which is impossible since $u>0$ outside $B_\rho$ (see Theorem 9.6 of~\cite{GilTru2001}).
By the same argument, using the Hopf lemma at the boundary, $u$ cannot vanish on $\partial\Omega$.
Therefore, $u>0$ on $\overline{\Omega}$.
\vspace{.2cm}

{\bf Case 2.} $u(x_0)=\zeta (x_0)$ for some $x_0\in\overline{\Omega}\setminus \overline{B_\rho}$.  
In the region ${\Omega}\setminus \overline{B_\rho}$
both $u$ and $\zeta$ are solutions of the same elliptic equation
 $   -d_u \Delta u = f(u)$ and satisfy the same Neumann boundary condition on $\partial\Omega$.
 Therefore, 
since $u\geq \zeta$, if $u$ and $\zeta$ were to touch at $x_0$, again by the strong maximum principle, we would have $u\equiv \zeta$, which is impossible, as $\zeta$ vanishes on the boundary of $B_\rho$. Therefore $u> \zeta$ on $\overline{\Omega}$. 

\vspace{.2cm}
{\bf Case 3.} $u(x_0)=1$ for some $x_0\in\overline{\Omega}$. We 
notice that $u$ satisfies a linear equation $-d_u \Delta u = f(u) -q(x) u$ for some $L^\infty$ function $q\geq 0$, while $1$ is a supersolution of the same equation with $1\geq u$. Moreover, by the Agmon-Douglis-Nirenberg estimate, $u$ is of class $W^{2,p}(\Omega)$ for all $p<\infty$, and therefore, if $x_0\in\Omega$, by the strong maximum principle we get $u\equiv 1$ in $\Omega$. Now, $u$ is of class $\mathrm{C}^2$ near the boundary of $\Omega$. Hence, from the strong maximum principle at the boundary we  infer $u\equiv 1$ as well if $x_0\in\partial\Omega$. But for $u\equiv 1$ to solve the equation, we must have $s=0$. Then, we get $h_\tau(u,s): = \lambda[(1-\tau)u + \tau \sigma(s)] = \lambda[1-\tau + \tau \sigma(0)]= \lambda[1]= \gamma-\alpha <0$. Hence, $u\equiv 1$ cannot be a solution and we have shown that $u< 1$ on $\overline{\Omega}$.

\vspace{.2cm}
{\bf Case 4.} $s=0$. In this case, the first equation reduces to the equation~\eqref{NeumannOmega} which holds in all of $\Omega$. Since
$u\geq \zeta$, owing to Lemma~\ref{lem:u=1}, we infer that $u\equiv 1$ which we have just ruled out. This lemma plays an important role here. 

\vspace{.2cm}
{\bf Case 5.} $s=S$. This last case is where the choice of $S$ comes into play.
That it cannot happen 
is a consequence of the following lemma.
\begin{lemma}\label{lem:S}
    Given $0< \gamma_0<\gamma$, there exists $S= S(\gamma_0)>0$ (sufficiently large) such that any solution $(u,s)$ of equation~\eqref{eq:u-equ1}
    with $s\geq S$ and $u\in\overline{D}$ must satisfy $\lambda[u]\geq \gamma_0$.
\end{lemma}

At first glance, the $s$ variable does not appear in the second equation of system~\eqref{FPN}, which can be surprising since this scalar equation corresponds to the additional scalar variable $s$. In fact, the previous lemma highlights the way the $s$ variable comes into play in the second equation of this system.

To prove this Lemma, we first require the following observation as in dimension 1.
\begin{lemma}\label{phiboundbelowN}
    There exists $\eta>0$ such that for all $u$
    with $0\leq u \leq 1$, the eigenfunction defined by~\eqref{eq:Nv-equ} satisfies
    \[
0< \eta \leq \phi[u](x) \leq 1 \qtext{for all} x\in A 
\qtext{and} u\in\overline{D}.
\]
\end{lemma}
 \begin{proof}
We know that $\gamma -\alpha \leq \lambda[u]\leq  \gamma$ for $0\leq u\leq 1$. Therefore, from the eigenfunction equation~\eqref{eq:Nv-equ}, and by the normalization condition
$\max_{x\in \overline{A}} \phi[u](x)=1$,
it follows that $\phi[u]$ is bounded from above and away from 0.
For if not, one could find a sequence of functions 
$u_j$ with $0\leq u_j\leq 1$, and corresponding 
$\lambda_j= \lambda[u_j], \phi_j= \phi[u_j]$ solutions of the eigenvalue problem~\eqref{eq:Nv-equ}, and such that
\( \min_{\overline{A}} \phi_j \searrow 0\)
as $j\nearrow \infty$. We can find subsequences (still denoted with index $j$) such that
\[
\lambda_j\to \lambda,\quad  u_j \rightharpoonup  u \qtext{weakly in} L^p(A), \quad
\phi_j \rightharpoonup \phi \qtext{weakly in} W^{2,p}(A), \qtext{for all}
p>1.
\]
From the classical embedding theorems with $p>N$, we know that $\phi_j \to \phi$ in
$C^1(\overline{A})$. Therefore, $\phi\in W^{2, p}(A)$ satisfies
\[
\Delta\phi = q(x) \phi, \quad \phi\geq 0, \qtext{in} A,
\qtext{and} \partial_\nu\phi=0 \qtext{on} \partial A
\]
where $q= (\alpha u - \gamma - \lambda)$ so that $ q\in L^\infty(A)$. Moreover, there must exist a point $x_0\in\overline{A}$ where $\phi$ vanishes. 
The strong maximum principle in $W^{2,p}$ (with $p>N$)  then shows that
$\phi\equiv 0$, which contradicts  $\max_{\overline{A}} \phi=1$. 
Therefore, there is a $\eta>0$ such that
\[
0< \eta \leq \phi[u](x) \leq 1 \qtext{for all} x\in [0,a] 
\qtext{and} u\in\overline{D}.
\]
\end{proof}

Next, we will use the asymptotic behavior when $\kappa\to\infty$ for the solution of the following equation:
\begin{equation}\label{eq:kappa}
    \begin{cases}
        - \Delta w + \kappa w =0 \qtext{in} A,\\
        w=1 \qtext{on} \partial A.
    \end{cases}
\end{equation}
\begin{lemma}\label{lem:boundarylayer}
    The solution $w= w_\kappa$ of equation~\eqref{eq:kappa} for $\kappa>0$
satisfies $0< w_\kappa < 1$ and $w_\kappa \searrow 0$ as $\kappa\nearrow \infty$, uniformly on compact subsets of $A$.
\end{lemma}
\begin{proof}
    This boundary layer type property is classical. For the convenience of the reader we include the proof.
We start by examining the case of the ball $B_\delta$ of radius $\delta<\rho$ about the origin and consider
\begin{equation}\label{eq:kappaball}
    \begin{cases}
        - \Delta z + \kappa z =0 \qtext{in} B_\delta,\\
        z=1 \qtext{on} \partial B_\delta.
    \end{cases}
\end{equation}
Define:
\[
\overline{z}(x): =e^{\xi(r^2 - \delta^2)}, \qtext{for} r=|x|,
\]
where $\xi$ is a parameter. Thus, $\overline{z}=1$ on $\partial B_\delta$ and
\[
-\Delta \overline{z} + \kappa \overline{z}= 
(-4 \xi^2 r^2 - 2N \xi + \kappa )\,
\overline{z}.
\]
Therefore, by choosing $\xi$ such that
$\kappa > 4 \xi^2\rho^2 + 2N \xi$, we get:
\[
-\Delta \overline{z} + \kappa \overline{z} \geq 0.
\]
Hence, $\overline{z}$ is a supersolution of equation~\eqref{eq:kappaball} and thus 
$z(0)\leq \overline{z}(0)= e^{-\xi \delta^2}$. It follows that, given $\delta, \varepsilon >0$, there is a $\kappa_0$ sufficiently large such that, for any $\kappa \geq \kappa_0$, we have $z(0) \leq \varepsilon$.

Now, considering the set $A$ and the equation~\eqref{eq:kappa}, for any point $x_0\in A$ whose distance to the boundary is larger than $\delta$, we can compare $w$ and $z(\cdot - x_0)$ which implies $w(x_0)< e^{-\xi \delta^2}$. We have thus proved that for any $\delta>0$, the solution $w$ converges uniformly to 0 on the set $\{x\in A \, ; \,\mathrm{dist}(x, \partial A)\geq \delta \}$.
\end{proof}
{\em Proof of Lemma~\ref{lem:S}.}
Let  $\sigma_0>0$ be a constant such that $f(u) \leq \sigma_0 u$ for all $u\in\overline{D}$ so that a solution $(u,s)\in\mathcal{D}$ with $s\geq S$ satisfies
\[
-d_u \Delta u  \leq [\sigma_0 - \beta S\delta]u \qtext{on} \overline{A}.
\]
Hence, for any $\kappa>0$ one may choose $S$ sufficiently large so that
\begin{equation} \label{eq:kappa-u}
-\Delta u + \kappa u \leq 0 \qtext{in} A.
\end{equation}
Comparison with~\eqref{eq:kappa} yields $u\leq w$ in $A$. Therefore, given some $\varepsilon>0$ and $\delta >0$, if $S$ is sufficiently large, we get $u\leq \varepsilon$ in the set 
$A_\delta:=\{x\in A \, ; \,\mathrm{dist}(x, \partial A)\geq \delta \}$ while $u\leq 1$ in the complement. 

Let $\lambda=\lambda[u], \phi=\phi[u]$ be the eigen-pair defined by
equation~\eqref{eq:Nv-equ}. 
It satisfies
\begin{equation}\label{eq:Rayleigh}
    \int_A |\nabla \phi|^2 - \int_A \alpha u \phi^2
    = (\lambda - \gamma ) \int_A \phi^2.
\end{equation}
Splitting the second integral over the set $A_\delta$
and its complement $N_\delta$, which is the $\delta$-neighborhood of the boundary in $A$, and using Lemma~\ref{phiboundbelowN} yield:
\begin{equation}\label{eq:integral comparison}
    \frac{\int_A u \phi^2}{\int_A \phi^2} \leq 
    \frac{1}{\eta} \frac{|N_\delta|}{|A|} +\varepsilon.
\end{equation}
Given $0 < \varepsilon < (\gamma - \gamma_0) /2$, we choose $\delta>0$ small enough so that $\frac{1}{\eta} \frac{|N_\delta|}{|A|} < \varepsilon$.
From this choice of $\varepsilon, \delta>0$,  we get a value of $S$ such that for any $s\geq S$, from~\eqref{eq:Rayleigh} and~\eqref{eq:integral comparison}  we infer $\lambda - \gamma \geq 2 \varepsilon$ which yields 
$\lambda[u] \geq \gamma_0$.
This completes the proof of Lemma~\ref{lem:S} which allows us to rule out case 5.
\end{proof}

\subsection*{Step 6. Computation of the topological degree}

The existence of a solution of~\eqref{statND} with $u\geq\zeta$ on $\Omega\setminus A$ is a consequence of the fact that the topological degree $d(H, \mathcal{D}, (0,0))$ is not zero. We now compute this degree.

\begin{proposition}\label{degree}
       $$d(H, \mathcal{D}, (0,0))= 1.$$  
\end{proposition}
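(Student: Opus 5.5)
By Proposition~\ref{prop:homotopy}, the homotopy $H_\tau$ is admissible on $\mathcal{D}$ for every $\tau\in[0,1]$. Homotopy invariance of the Leray--Schauder degree then gives
\[
d(H,\mathcal{D},(0,0)) = d(H_1,\mathcal{D},(0,0)).
\]
So the plan is to compute the degree at $\tau=1$, where
\[
H_1(u,s)=\bigl(u-\mathcal{F}(u,s),\ \lambda[\sigma(s)]\bigr).
\]
The second component now depends only on $s$. Since $\sigma(s)$ is a constant function, the principal eigenfunction in~\eqref{eq:Nv-equ} is constant and
\[
\lambda[\sigma(s)] = \gamma - \alpha\sigma(s) =: g(s).
\]
This function is strictly increasing on $(0,S)$ because $\sigma'<0$. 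Moreover $g(0)=\gamma-\alpha<0$ and $g(S)=\gamma>0$. Hence $g$ has exactly one zero $s_0\in(0,S)$, it is nondegenerate with $g'(s_0)>0$, and $\phi[\sigma(s_0)]\equiv 1$.

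Next I deform $H_1$ further to decouple the two components. For $\mu\in[0,1]$ I consider
\[
K_\mu(u,s) = \bigl(u-\mathcal{F}(u,(1-\mu)s+\mu s_0),\ g(s)\bigr).
\]
Any zero of $K_\mu$ in $\overline{\mathcal{D}}$ must have $s=s_0$, so the first component is $u-\mathcal{F}(u,s_0)$ for every $\mu$. Hence the zero sets of $K_\mu$ all coincide with that of $H_1$. In particular they avoid $\partial\mathcal{D}$: Proposition~\ref{prop:homotopy} at $\tau=1$ covers exactly the case $s=s_0$, and $s=0$, $s=S$ are excluded since $g\ne0$ there.

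At $\mu=1$ the map is a product: $u\mapsto u-\mathcal{F}(u,s_0)$ on $D$, and $s\mapsto g(s)$ on $(0,S)$. By the product formula,
\[
d(H,\mathcal{D},(0,0)) = \operatorname{sign} g'(s_0)\cdot d\bigl(I-\mathcal{F}(\cdot,s_0), D, 0\bigr) = d\bigl(I-\mathcal{F}(\cdot,s_0), D, 0\bigr).
\]

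It remains to compute the degree of $T:=\mathcal{F}(\cdot,s_0)$ on $D$. Its fixed points solve
\[
-d_u\Delta u = f(u) - \beta s_0\mathbb{1}_A u,\qquad \zeta<u<1,
\]
with Neumann boundary conditions. Because $K$ was chosen so that $z\mapsto f(z)+Kz-\beta S z$ is increasing on $[0,1]$, the map $T$ is order preserving. Also, $\underline{u}=\zeta$ (extended by $0$ on $B_\rho$) is a generalized subsolution, since the equation is unaffected outside $B_\rho$, where $\mathbb{1}_A=0$. Likewise $\overline{u}=1$ is a supersolution: $f(1)-\beta s_0\mathbb{1}_A\le0$. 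These two facts are the input for the standard computation, following Amann and Dancer, that the degree of a compact order-preserving map on the order interval strictly between a sub- and a supersolution equals $1$. Concretely, I would define the convex set
\[
C=\{u\in C^1(\overline\Omega): \zeta\le u\le 1\}.
\]
Order preservation together with the strong maximum principle gives $T(C)\subset C$, with $T(u)$ lying strictly inside $D$ (the sub/supersolutions being strict, since $\zeta=0<T(u)$ on $\partial B_\rho$ by positivity). Then the affine homotopy $u-[(1-t)T(u)+t u_*]$, with $u_*\in D$ fixed, has no zeros on $\partial D$: any zero lies in the convex set $\overline{T(C)}\subset D$ reached by convex combination. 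Its degree at $t=1$ is $d(I-u_*,D,0)=1$.

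The main obstacle is this last step. One must check that $T$ maps $\overline D$ strictly into $D$ in the $C^1$ topology; in particular, that $T(u)>\zeta$ uniformly near $\partial B_\rho$ and on $\partial\Omega$ (via the Hopf lemma). Then one must confirm that convex combinations stay in $D$ even though $D$ is only open in $C^1$. If strictness fails near $\partial B_\rho$, where $\zeta$ is merely Lipschitz across the sphere, I would instead use the a priori bound from Proposition~\ref{prop:homotopy} (no fixed points on $\partial D$) together with excision. That reduces the computation to a slightly larger open set on which the convex-retraction argument applies cleanly.
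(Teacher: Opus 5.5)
Your outer structure is the paper's: homotopy invariance down to $\tau=1$, the unique nondegenerate zero $s_0$ of $\gamma-\alpha\sigma(s)$, decoupling into a product map, the product formula, and degree one for $I-\mathcal{F}(\cdot,s_0)$ on the convex set $D$ once $\mathcal{F}(\cdot,s_0)$ maps $\overline{D}$ into $D$. Your homotopy $K_\mu$ is a correct, explicit version of what the paper calls ``a standard homotopy argument.''

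\textbf{The gap.} It lies in how you justify the last invariance. The homotopy $H_\tau$ deforms only the second component, so the first component of $H_1$ is still built from $\phi[u]$, not from $\phi[\sigma(s_0)]\equiv 1$. Fixed points of $T=\mathcal{F}(\cdot,s_0)$ therefore solve $-d_u\Delta u=f(u)-\beta s_0\mathbb{1}_A\phi[u]\,u$, not the equation you wrote. For this $T$, the claim that $T$ is order preserving does not follow from the monotonicity of $z\mapsto f(z)+Kz-\beta Sz$. The reason is that $\phi[u]$ depends on $u$ nonlocally. Two ordered functions $u_1\le u_2$ that coincide at some $x_0\in A$ can satisfy $\phi[u_2](x_0)>\phi[u_1](x_0)$. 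The nonlinearity $u\mapsto f(u)+Ku-\beta s_0\mathbb{1}_A\phi[u]u$ is then not monotone, and no choice of $K$ compensates. So the sub/supersolution (Amann--Dancer) mechanism you invoke does not give $T(C)\subset C$ as stated.

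\textbf{The repair.} This is exactly the paper's lemma, and it avoids monotonicity, using only pointwise bounds. For $\zeta\le u\le 1$, since $0<\phi[u]\le 1$ and $s_0<S$,
\[
0\le f(u)+Ku-\beta Su\le f(u)+Ku-\beta s_0\mathbb{1}_A\phi[u]u\le f(u)+Ku\le K .
\]
The middle expression is $<K$ on $A$. On $\Omega\setminus\overline{B_\rho}$, where $\mathbb{1}_A=0$, it equals $f(u)+Ku\ge f(\zeta)+K\zeta$. The maximum principle, the strong maximum principle and the Hopf lemma then give $0<T(u)<1$ on $\overline{\Omega}$ and $T(u)>\zeta$ on $\overline{\Omega}\setminus B_\rho$, that is, $T(\overline{D})\subset D$.

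Alternatively, you could insert one more homotopy replacing $\phi[u]$ by $(1-t)\phi[u]+t$ in the first component. With $s=s_0$ interior, Cases 1--3 of Proposition~\ref{prop:homotopy} still rule out zeros on $\partial D$; in Case 3, $u\equiv 1$ is now excluded directly because $s_0>0$. At $t=1$ your monotone argument is then valid.

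\textbf{Your ``main obstacle.''} This is not actually an obstacle. $T(u)>0$ on all of $\overline{\Omega}$ by the strong maximum principle, so strictness at $\partial B_\rho$ is automatic. For the affine homotopy you only need $(1-t)T(u)+tu_*\in D$ for $u\in\overline{D}$, which follows immediately from convexity of $D$. Neither a uniform margin ($\overline{T(C)}\subset D$) nor excision is needed.
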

\begin{proof}
Owing to Proposition~\ref{prop:homotopy}, the homotopy invariance of the Leray-Schauder degree implies that the degree 
$d(\, H_\tau, \mathcal{D}, (0,0)\,)$ is independent of $t$ and thus,
\[
d(\, H, \mathcal{D}, (0,0)\,)\, = \, d(\, H_1, \mathcal{D}, (0,0)\, ).     
\]
The second component of $H_1$, $h_1(u,s)$, does not depend on $u$. Indeed, we have
\begin{equation*}
    h_1(u,s)= \lambda[\sigma(s)]=  \gamma - \alpha \sigma(s).
\end{equation*}
Since $\gamma < \alpha$ and $\sigma'<0$ on $(0, S)$, the equation
$h_1(u, s)=0$ has a unique solution $s=s_1 \in (0, S)$.
Hence, by a standard homotopy argument, we can diagonalize the operator $H_1$ in its two components:
\begin{equation}\label{multiplicative}
    d(\, H, \mathcal{D}, (0,0)\,)\, = \, d(\, H_1, \mathcal{D}, (0,0)\, )
    = d(\tilde H(\cdot, s_1), D, 0) \times d(h_1, (0, S), 0).
\end{equation}
We know that $d(h_1, (0, S), 0)=1$. 

To compute the other term in the right-hand side we rely on the following:
\begin{lemma}
    Let $K$ be such that the function $z\mapsto f(z) + K z - \beta S z$ is increasing on $[0,1]$. Then, 
    $\mathcal{F}(\cdot, s_1)$ maps $\overline{D}$ into its interior.
\end{lemma} 
\begin{proof}
Let $\zeta\leq u\leq 1$ in $\Omega$.  
Recall that $\zeta$ is extended by $0$ over $B_\rho$ and 
that $w=\mathcal{F}(u, s_1)$ is defined by
\[
\begin{cases}
    -d_u \Delta w +K w  = f(u) + Ku - \beta \mathbb{1}_{A} s_1 \phi[u] u  \qtext{in} \Omega,\\
			\partial_\nu w=0, \qtext{on} \partial \Omega.	
\end{cases}
\]
By the choice of $K$, we know that for such a $u$ we have
\[
\begin{cases}
f(u) + K u - \beta s_1 \phi[u] u \geq 0 &\qtext{for} x\in \Omega,\\
f(u) + Ku \geq f(\zeta) + K\zeta &\qtext{for} x\in\overline{\Omega}\setminus B_\rho,\\
f(u) + K u - \beta s_1 \phi[u] u \leq f(1) + K = K  &\qtext{for} x\in \Omega,\\
f(u) + K u - \beta s_1 \phi[u] u < K  &\qtext{for} x\in A.
\end{cases}
\]
From the third inequality, we get
\[
\begin{cases}
- d_u \Delta(w-1) + K( w-1) \leq 0 \qtext{in} \Omega,\\
\partial_\nu (w-1) =0 \qtext{on} \partial\Omega,
\end{cases}
\]
whence, by the maximum principle, $u \leq1$ on $\overline{\Omega}$. From the last inequality we infer that $u\not\equiv 1$ and thus $u < 1$ on $\overline{\Omega}$.
Likewise, since
$-d_u \Delta w + K w \geq 0$ on $\overline{\Omega}$ and $-d_u \Delta (w-\zeta) + K(w-\zeta) \geq 0$ on $\Omega\setminus A$, we get $w\geq 0$ on $\overline{\Omega}$ and $w\geq\zeta$ on $\overline{\Omega}\setminus B_\rho$. From the fact that $\zeta$ extended by 0 on $B_\rho$ is a strict sub-solution of the equation, we derive that these inequalities are strict and therefore that $w$ is in the interior of $D$.
\end{proof}

We can conclude the proof of Proposition~\ref{degree}. Since $D$ is convex and $\mathcal{F}(\cdot, s_1)$ maps $D$ into its interior, we know that the degree $d(\mathcal{F}(\cdot, s_1), D, 0)$ is 1. Hence, the multiplicative formula~\eqref{multiplicative} above then yields:
\[
d(\, H, \mathcal{D}, (0,0)\,) \,= \, 1.
\]
which establishes the claim of Proposition~\ref{degree}. \qed

Since the degree is non zero, we conclude that there exists at least one solution $(u, s)$ with $0<s$ and $u\in D$ of the system~\eqref{statND}.
The proof of Theorem~\ref{thm:existenceND} is thereby complete.
\end{proof}

\section{Persistence owing to exclusion zones}\label{sec:persistance_due_to_exclusion}

    Theorem~\ref{thm:existence1D} shows that the 1D stationary model~\eqref{eq:stat1D} has solutions provided $L-a$ is large enough. Extending this, Theorem~\ref{thm:existenceND} shows that the higher dimensional model~\eqref{eq:higher_dimensional_model} has coexistence equilibria provided $\Omega\setminus A$ contains a large enough ball. In other words, if the prey have a large predator-free exclusion zone, then both species can persist. In this section we show that the presence of an exclusion zone is, in some circumstances, necessary for coexistence solutions to exist..

    If we consider the homogeneous problem with no spatial dependence in the case that the predator and prey domains coincide, the model~\eqref{eq:higher_dimensional_model} becomes an ODE. It is easy to see that this ODE has a unique coexistence equilibrium $(\hat u,\hat v)$ given by
    \begin{equation}
        \hat u=\frac{\gamma}{\alpha},\quad \hat v=\frac{1}{\beta}\frac{f(\hat u)}{\hat u}
	\end{equation}
    if and only if $\theta<\gamma/\alpha<1$ because otherwise, either $\hat u\not\in (0,1)$ or $f(\hat u)$ (and hence $\hat v$) is negative. This suggests that for small $\gamma/\alpha$, the equilibrium solutions of~\eqref{eq:higher_dimensional_model} may fail to be coexistence solutions since, for such solutions $(u,v)$, we must have $f(u)<0$. In other words, the overabundance of predators resulting from their high conversion rate $\alpha$ and low mortality rate $\gamma$ forces the prey population beneath the Allee threshold. This fact is proved by the following theorem.
    \begin{theorem}
       Suppose $A=\Omega$, a bounded domain in $\mathbb R^N$ with $C^2$ boundary. There exists  $\gamma_0>0$ (depending on $\alpha,\beta,f$, and $\Omega$) such that if $0<\gamma<\gamma_0$, then~\eqref{eq:higher_dimensional_model} does not have a coexistence equilibrium.
    \end{theorem}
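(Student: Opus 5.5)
We argue by contradiction. The idea is to use the predator equation to show that the total prey mass is $O(\gamma)$. An $L^1$–$L^\infty$ estimate then forces $u$ below the Allee threshold $\theta$ everywhere, which contradicts the integrated prey equation.

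Let $(u,v)$ be a coexistence equilibrium with $A=\Omega$, so $0<u<1$ and $v>0$. Since $v>0$ solves $-d_v\Delta v=(\alpha u-\gamma)v$ with Neumann conditions, $v$ is a positive eigenfunction with eigenvalue zero. Hence $\lambda[u]=0$ and $v=s\phi[u]$ with $s>0$, in the notation of Step 4.

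\emph{Step 1 (the prey mass is $O(\gamma)$).} Integrating the eigenvalue equation for $\phi=\phi[u]$ over $\Omega$ and using the Neumann condition gives
\[
\int_\Omega (\alpha u-\gamma)\phi =0 .
\]
Since $\phi\le 1$, this yields $\alpha\int_\Omega u\phi \le \gamma|\Omega|$. Lemma~\ref{phiboundbelowN} gives $\phi\ge\eta$, and therefore
\[
\int_\Omega u\le \frac{\gamma|\Omega|}{\alpha\eta}.
\]
We must check that $\eta$ can be chosen independently of $\gamma\in(0,\gamma_{\max}]$. This holds because the compactness proof of Lemma~\ref{phiboundbelowN} also works when $\gamma$ varies in a compact interval: one extracts a further convergent subsequence $\gamma_j\to\gamma$, and the potential $q$ stays bounded in $L^\infty$.

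\emph{Step 2 (from $L^1$ to $L^\infty$).} Because $f$ is smooth with $f(0)=0$, there is $M$ with $f(z)\le Mz$ on $[0,1]$. Since $v\ge0$, the prey equation gives
\[
-d_u\Delta u\le f(u)\le Mu \quad\text{in } \Omega,\qquad \partial_\nu u=0 \ \text{on } \partial\Omega,\qquad u\ge 0 .
\]
So $u$ is a nonnegative subsolution of a linear equation with bounded coefficient. The local maximum principle (Gilbarg--Trudinger, Theorem~8.17) bounds $\sup u$ by $C\|u\|_{L^1}$ on interior balls. Near $\partial\Omega$, which is $C^2$, we flatten the boundary and reflect evenly across it; the Neumann condition makes the reflected function a subsolution of a uniformly elliptic divergence-form equation with bounded measurable coefficients, so the same estimate applies there. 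A finite cover of $\overline\Omega$ then gives
\[
\|u\|_{L^\infty(\Omega)}\le C\|u\|_{L^1(\Omega)}\le C'\gamma,
\]
with $C'$ depending only on $\alpha,\beta,f,\Omega,d_u$ (for $\gamma\le\gamma_{\max}$). I expect this boundary part of the estimate to be the main technical obstacle. An alternative is to work directly with the Neumann problem via Moser iteration using the $W^{1,2}$ Neumann Sobolev inequality. If one prefers to avoid $L^1$ norms, it also suffices to bound $\|u\|_{L^2}$, after interpolating with $0\le u\le1$.

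\emph{Step 3 (contradiction).} Choose $\gamma_0$ so small that $C'\gamma_0<\theta$. Then for $0<\gamma<\gamma_0$ we have $0<u<\theta$ everywhere, so $f(u)<0$ on $\Omega$ by (F3). Integrating the prey equation over $\Omega$ with the Neumann condition gives
\[
\int_\Omega f(u)=\beta\int_\Omega uv>0,
\]
because $u,v>0$. This contradicts $\int_\Omega f(u)<0$, so no coexistence equilibrium exists for $0<\gamma<\gamma_0$.
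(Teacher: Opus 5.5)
Your argument is correct, but it takes a different route from the paper. The paper argues by contradiction along $\gamma_n=1/n$. It first bounds $\|v_n\|_{L^\infty}\le Q/(\eta\beta)$ by testing the prey equation with $u_n$ and using Lemma~\ref{phiboundbelowN}. It then uses $W^{2,p}$ bounds and compact embeddings to extract $C^0$ limits $u_0$ and $\tilde v_0$. Passing to the limit in $\int_\Omega(\alpha u_n-\gamma_n)\tilde v_n=0$ gives $u_0\equiv 0$, and only then does it reach the same final contradiction from the integrated prey equation.

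You instead work at fixed $\gamma$ with two quantitative estimates. The integrated predator equation together with the Harnack bound gives $\int_\Omega u\le\gamma|\Omega|/(\alpha\eta)$ directly. The one-sided inequality $-d_u\Delta u\le Mu$, obtained by discarding the predation term by sign, then turns this into a sup bound.

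This buys you an explicit threshold, no subsequences, and no need for any bound on $v$. In particular your $\gamma_0$ is independent of $\beta$: it depends only on $\alpha,f,\Omega,d_u,d_v$. In the paper, $\beta$ enters through the bound on $v_n$, and $\gamma_0$ is not explicit. The price you pay is the boundary version of the local maximum principle.

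That price can be avoided with the tool the paper already uses. Fix $K>0$ and let $w$ solve $-d_u\Delta w+Kw=(M+K)u$ in $\Omega$ with $\partial_\nu w=0$. Testing $-d_u\Delta(u-w)+K(u-w)\le 0$ against $(u-w)^+$ gives $u\le w$. The Neumann $W^{2,p}$ estimate with $p>N/2$, together with $\|u\|_{L^p}^p\le\|u\|_{L^1}$ (valid since $0\le u\le 1$), then yields $\sup_\Omega u\le C\gamma^{1/p}$. That is enough for your Step~3.

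Also, the uniformity of $\eta$ in $\gamma$ needs no compactness argument. $\phi[u]$ is the principal Neumann eigenfunction of $-d_v\Delta-\alpha u$, and $\gamma$ merely shifts the eigenvalue. So $\phi[u]$, and hence $\eta$, does not depend on $\gamma$ at all; the paper uses this fact tacitly.
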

    \begin{proof}
        Fix $\alpha$, $\beta$, $f$, and $\Omega\subset\mathbb R^N$ with $\Omega$ bounded. Suppose, by way of contradiction, that there exists a coexistence equilibrium $(u_n,v_n)$ to~\eqref{eq:higher_dimensional_model} with $A=\Omega$ and $\gamma_n=1/n$ for each $n\in\mathbb N$. Since $(u_n,v_n)$ is a coexistence equilibrium, it satisfies $0<u_n<1$ and $0<v_n$.

        Then for each $n$, we have $-d_u \Delta u_n+\beta u_nv_n=f(u_n)$. We multiply by $u_n$ and integrate by parts to obtain
        \begin{equation}
            \beta\left(\inf_{\Omega} v_n\right)\int_\Omega u_n^2\leq d_u\Vert\nabla u_n\Vert_{L^2}^2+\beta\int_\Omega v_n u_n^2=\int_\Omega u_n f(u_n)\leq Q\int_{\Omega }u_n^2,
        \end{equation}
        where $Q=\sup_{0<s\leq 1}\tfrac{f(s)}{s}$. Therefore, for all $n$,  $\inf_\Omega v_n\leq \frac{Q}{\beta}$.
        
        On the other hand, by Lemma~\ref{phiboundbelowN}, there exists $\eta>0$ so that, regardless of $u_n$, a solution $\phi_n$ to
        \begin{equation*}
            -\Delta\phi_n-(\alpha u_n-\gamma)\phi_n=0,\quad\quad\sup_\Omega\phi_n=1
        \end{equation*}
        satisfies $0<\eta\leq\phi_n(x)\leq 1$ for all $x\in\Omega$. But $\phi_n=\frac{v_n}{\Vert v_n\Vert_{L^\infty}}$ is such a solution, so
        \begin{equation*}
            \eta \Vert v_n\Vert_{L^\infty}\leq \inf_\Omega v_n.
        \end{equation*}
        We conclude that for all $n$, $\Vert v_n\Vert_{L^\infty}\leq \frac{Q}{\eta\beta}$. Thus, $\Vert v_n\Vert_{L^\infty}$ is bounded.

        Now we show that $u_n$ and $v_n$ have subsequences that converge in $C^{0}(\Omega)$. For this, we recall that if $w$ solves
        \begin{equation}
        \begin{cases}
            -\Delta w+w=g & x\in\Omega\\
            \partial_\nu w=0 & x\in\partial\Omega,
        \end{cases}
        \end{equation}
        and if $g\in L^p(\Omega)$, then $w$ is in $W^{2,p}(\Omega)$ and there is a constant $K$ (depending only on $\Omega$) such that $\Vert w\Vert_{W^{2,p}}\leq K\Vert g\Vert_{L^p}$ (see, e.g., Theorem 9.26 in~\cite{Brezis}). For each $n$, we may write the equations for $u_n$ and $v_n$ as
        \begin{align*}
            -&\Delta u_n+u_n=g_{u,n}\\
            -&\Delta v_n+v_n=g_{v,n}
        \end{align*}
        where
        \begin{align*}
            g_{u,n}&=u_n+\frac{f(u_n)-\beta u_nv_n}{d_u}\\
            g_{v,n}&=v_n+\frac{(\alpha u_n-1/n)v_n}{d_v}.
        \end{align*}
        Since $f(u_n)$, $u_n$ and $v_n$ are all uniformly bounded in $L^\infty$, there exists $R>0$ so that $\Vert g_{u,n}\Vert_{L^p}$ and $\Vert g_{v,n}\Vert_{L^p}$ are bounded by $R$ for all $n\in\mathbb N$ and $p\geq 1$. Thus,
        \begin{equation}
            \Vert u_n\Vert_{W^{1,p}}\leq KR\quad\text{and}\quad \Vert v_n\Vert_{W^{1,p}}\leq KR
        \end{equation}
        Choosing $p>N/2$, by the Rellich--Kondrachov and Sobolev--Morrey Embedding Theorems, we may strike out subsequences $u_{n_k}$ and $v_{n_k}$ converging to $u_0$ and $v_0$ in $C^0(\Omega)$.

        Let $\tilde v_n=\frac{v_n}{\Vert v_n\Vert_{C^0}}$. Then $\tilde v_n$ satisfies $-d_v\Delta v_n=(\alpha u_n-1/n)\tilde v_n$ in $\Omega$. Therefore, by the same argument as the above, $\tilde v_n$ is uniformly bounded in $W^{2,p}(\Omega)$ for all $p\geq 1$ and it has a subsequence that converges to $\tilde v_0$ in $C^0(\Omega)$. Without loss of generality, we take this subsequence to be $\tilde v_{n_k}$.
        
        For each $k$, we have
        \begin{equation}
            0=d_v\int_\Omega \Delta \tilde v_{n_k}\,dx=\int_\Omega \left(\alpha u_{n_k}-\frac{1}{n_k}\right)\tilde v_{n_k}\,dx.
        \end{equation}
        Thus,
        \begin{equation}
            \int_\Omega u_0\tilde v_0=\lim_{k\to\infty}\frac{1}{\alpha n_k}\int_\Omega \tilde v_{n_k}=0.
        \end{equation}
        Since Lemma~\eqref{phiboundbelowN} applies to $\tilde v_{n_k}$ for each $k$, it also applies to $\tilde v_0$. But we have $\sup_{\Omega}\tilde v_0=1$, so $\tilde v_0>0$ in $\Omega$. Thus, $u_0\equiv 0$. But $u_{n_k}$ converges to $u_0$ uniformly, meaning that for $k$ sufficiently large, $0<u_{n_k}<\theta$ in $\Omega$. But for such $k$,
        \begin{equation}
            \int_{\Omega} d_u
            \Delta u_{n_k}+f(u_{n_k})-\beta u_{n_k}v_{n_k}<0,
        \end{equation}
        which is a contradiction. Thus, there exists $\gamma_0>0$ so that for $\gamma<\gamma_0$, there are no coexistence equilibria to~\eqref{eq:higher_dimensional_model}.
    \end{proof}
    It is worth noting how this proof breaks down if $A$ is a proper subset of $\Omega$. In the latter case, we may still find limits $u_0$ and $v_0$ such that $\int_A u_0v_0=0$, and conclude that $u_0=0$ on $A$, but we may not conclude that $u_0=0$ on $\Omega$. This is why coexistence equilibria may still be possible via Theorem~\ref{thm:existenceND}.

\section{Thin-limit analysis in 1D}\label{sec:thin_limit_1D}

In this section, we consider the 1D steady state problem in the case that $\Omega=(0,L)$ and $A=(0,a)$ for $0<a<L$. That is, we study solutions to
\begin{equation}\label{eq:stat1D}
    \begin{cases}
        -d_u u''=f(u)-\beta \mathbb{1}_{[0,a]}uv & x\in (0,L)\\
        -d_v v''=-\gamma v+\alpha uv & x\in (0,a)\\
        u'(0)=u'(L)=0 & \\
        v'(0)=v'(a)=0 &
    \end{cases}
\end{equation}
with $v\not\equiv 0$.

We note that Theorem~\ref{thm:existenceND} does not apply to the setting~\eqref{eq:stat1D} because $A$ and $\Omega$ share a common boundary at $x=0$. However, Theorem~\ref{thm:existence1D} below shows that the result of Theorem~\ref{thm:existenceND} may be extended to~\eqref{eq:stat1D}. 

As in the higher-dimensional case, we will employ as a lower bound for $u$ in~\eqref{eq:stat1D} the function $\zeta_{a,L}$ defined on $[a, L]$:
\begin{equation}\label{eq:aux1D}
    \begin{cases}
        -d_u \zeta_{a,L}''=f(\zeta_{a,L}) & x\in (a,L)\\
        \zeta_{a,L}(a)=0,\\
        \zeta_{a,L}'(L)=0,\\
        0\leq \zeta_{a,L}<1
    \end{cases}
\end{equation}
Observe that by Lemmas~\ref{lem:zetaR_existence} and~\ref{'lem:z_R_increasing} and Corollary~\ref{ZetaReta}, the following holds: 
\begin{proposition}\label{prop:subsolzet}
There exists $b_0$ sufficiently large such that if $L-a\geq b_0$, then~\eqref{eq:aux1D} admits a maximum positive solution $\zeta_{a,L}$ such that $0<\zeta_{a,L}<1$ for all $a<x<L$. Moreover, the following are true:
\begin{itemize}
    \item For any $\eta>0$ there exists $b=b(\eta)$ so that if $L-a>b$, then $\zeta_{a,L}(L)>1-\eta$.
    \item $\zeta_{a,L}(x)$ is increasing in $x$.
    \item If $a+b_0\leq L_1\leq L_2$, then $\zeta_{a,L_1}\leq \zeta_{a,L_2}$ on $[a,L_1]$.
    \item If $0\leq a_1\leq a_2\leq L-b_0$, then $\zeta_{a_1,L}\geq\zeta_{a_2,L}$ on $[a_2,L]$.
\end{itemize}
\end{proposition}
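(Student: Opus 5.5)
The plan is to read Proposition~\ref{prop:subsolzet} as the case $N=1$ of Step~1, translating Lemmas~\ref{lem:zetaR_existence}, \ref{'lem:z_R_increasing} and Corollary~\ref{ZetaReta} to the interval $(a,L)$. In one dimension the ``annulus'' $B_R\setminus\overline{B_\rho}$ becomes $(a,L)$: Dirichlet condition at the inner end $a$, Neumann condition at the outer end $L$. The radial ODE \eqref{Nzetaradial} loses its term $\frac{N-1}{r}\zeta'$ and becomes autonomous. I would redo each argument rather than cite verbatim, because the earlier statements are phrased for balls. Every step only uses the ODE, the energy identity and the generalized subsolution construction, and these carry over without change.

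\emph{Existence and monotonicity in $x$.} Take $\sigma$ large so that \eqref{semilinearBall} in one dimension has a positive solution $V$ on $(-\sigma,\sigma)$. Set $b_0>2\sigma$. If $L-a\geq b_0$, a translate $V(\cdot-c)$ extended by $0$ is a subsolution of \eqref{eq:aux1D}, and $1$ is a supersolution. Monotone iteration starting from $1$ then gives a maximal solution $\zeta_{a,L}$ with $V(\cdot - c)\le\zeta_{a,L}\le 1$. Since $\zeta_{a,L}\not\equiv0$, the strong maximum principle gives $\zeta_{a,L}>0$ on $(a,L]$. Also $\zeta_{a,L}<1$, since $\zeta_{a,L}(a)=0$ rules out touching the solution $1$.

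For monotonicity I repeat the argument of Lemma~\ref{lem:zetaR_existence}. Suppose $r_0\in(a,L)$ is a first critical point. The one-dimensional energy identity $\frac{d_u}{2}\zeta'^2+F(\zeta)=\mathrm{const}$ shows that $\zeta(r_0)$ is a global maximum. Moreover $\zeta(r_0)>\max V>\theta'$. The function equal to $\zeta$ on $[a,r_0]$ and to the constant $\zeta(r_0)$ on $[r_0,L]$ is then a $C^1$ generalized subsolution. Maximality forces $\zeta\ge\zeta(r_0)$ on $[r_0,L]$, which is a contradiction.

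\emph{Monotonicity in $L$ and $a$.} For $L_1\le L_2$, extend $\zeta_{a,L_1}$ by the constant $\zeta_{a,L_1}(L_1)>\theta'$ on $[L_1,L_2]$. This is a subsolution on $(a,L_2)$, so maximality of $\zeta_{a,L_2}$ gives the third bullet.

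For $a_1\le a_2$, extend $\zeta_{a_2,L}$ by $0$ on $[a_1,a_2]$. This is continuous and is a generalized subsolution, being the maximum of the subsolutions $0$ and $\zeta_{a_2,L}$ in the sense of \cite{BerestyckiLions1978-subsupersolutions}. The corner at $a_2$ has the correct sign because $\zeta_{a_2,L}'(a_2)>0$. Maximality of $\zeta_{a_1,L}$ then gives the fourth bullet.

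\emph{The limit.} This is the only step needing care. By translation invariance, $\zeta_{a,L}(x)=Z_\ell(x-a)$ with $\ell=L-a$, where $Z_\ell$ solves the problem on $(0,\ell)$. By the third bullet $Z_\ell$ is increasing in $\ell$, so it converges monotonically to a solution $Z$ on $(0,\infty)$. As in Lemma~\ref{'lem:z_R_increasing}, $Z$ is nondecreasing, $Z(\infty)>\theta'$, and $f(Z(\infty))=0$, hence $Z(\infty)=1$.

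The difficulty is that the first bullet asks for the endpoint value $Z_\ell(\ell)$, not the value at a fixed point. For this I use $Z_\ell(\ell)\ge Z_\ell(x)\ge Z_{\ell'}(x)$ for any fixed $x\le\ell'\le\ell$. Choose $x$ with $Z(x)>1-\eta/2$, then $\ell'$ large so that $Z_{\ell'}(x)>1-\eta$. For all $\ell\ge \ell'$ this gives $Z_\ell(\ell)>1-\eta$, so $b(\eta)=\ell'$ works and establishes the first bullet.
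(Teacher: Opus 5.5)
Your proposal is correct and takes the paper's route: the paper justifies this proposition only by reading Lemmas~\ref{lem:zetaR_existence}, \ref{'lem:z_R_increasing} and Corollary~\ref{ZetaReta} with $N=1$, which is exactly the translation you carry out. Your added details are worthwhile rather than cosmetic. For $N=1$, the paper's radial step ruling out a return to the level $\zeta(r_0)$ relies on the term $(N-1)\int\zeta'^2/r$, which vanishes, so your conserved-energy argument is the right replacement: there $F(\zeta(r_0))=\frac{d_u}{2}\zeta'(a)^2>0$ forces $\zeta(r_0)>\theta'$ to be the global maximum, a step worth stating explicitly. The fourth bullet is not contained in the cited results, and your extension-by-zero subsolution supplies it. Finally, your chain $Z_\ell(\ell)\ge Z_\ell(x)\ge Z_{\ell'}(x)$ makes explicit the passage from pointwise convergence to the endpoint value that the proof of Lemma~\ref{'lem:z_R_increasing}(ii) leaves implicit.
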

The one-dimensional nature of $\zeta_{a,L}$ permits additional analysis. For example, multiplying~\eqref{eq:aux1D} by $\zeta_{a,L}'$ and integrating, we find a quantity that is conserved for all $x$ for a single solution $\zeta_{a,L}$:
\begin{equation}\label{eq:energy}
    E(a,L)=\frac{d_u}{2}(\zeta_{a,L}')^2+F(\zeta_{a,L}),
\end{equation}
where $F$ is the antiderivative of $f$:
\begin{equation}
    F(s)=\int_0^s f(s')\,ds'.
\end{equation}
Several properties of $F$ are important: $F(0)=0$, and due to property F4, there is an additional root $\theta'\in (\theta,1)$. For $s\in (0,\theta')$, $F(s)<0$, reaching a minimum at $s=\theta$. For $s\in(\theta',1)$, $F(s)>0$, reaching a maximum at $s=1$. Thus, for all $y\in (0,F(1)]$, there is a unique solution to $F(s)=y$, and that solution is in $(\theta',1)$.

The following analogue of Theorem~\ref{thm:existenceND} guarantees existence of nontrivial solutions to~\eqref{eq:stat1D}.
\begin{theorem}\label{thm:existence1D}
  Consider the system~\eqref{eq:stat1D} with $f$ of bistable type satisfying the assumptions F1-F4. Assume that $\gamma < \alpha$. There exists $b_0$ (at least as large as $b_0$ in Proposition~\ref{prop:subsolzet}) such that for any $a, L$ such that $L-a\geq b_0$, there exists a solution $(u,v)$ of system~\eqref{eq:stat1D} such that $\zeta_{a,L}<u<1$ in $(a,L)$ and $v>0$ in $(0,a)$. In particular, $u$ is nonconstant.
\end{theorem}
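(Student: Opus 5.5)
The plan is to repeat the degree argument of Section~\ref{sec:coexistence_higher_dim} with $\Omega=(0,L)$, $A=(0,a)$. The ball $B_\rho$ becomes the interval $(0,a)$, and the annulus becomes $(a,L)$. The auxiliary function $\zeta$ of Lemma~\ref{LemmaNzeta} is replaced by $\zeta_{a,L}$ from Proposition~\ref{prop:subsolzet}, extended by $0$ on $[0,a]$.

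\textbf{Setup.} Set $\mathcal E=C^1([0,L])\times\mathbb R$ and
\[
D=\{u:\ \zeta_{a,L}<u<1 \text{ on } [a,L],\ 0<u<1 \text{ on } [0,a]\},\qquad \mathcal D=D\times(0,S).
\]
Define $\lambda[u]$, $\phi[u]$ by the Neumann eigenproblem~\eqref{eq:Nv-equ} on $(0,a)$. Define $\mathcal F$ by~\eqref{eq:Nu-equ} with Neumann conditions at $0$ and $L$, and use the same homotopy $H_\tau$. The common boundary point $x=0$ causes no trouble. Both $u$ and $\phi$ satisfy Neumann conditions there, and the Hopf lemma at an endpoint is just the ODE fact that $u(0)=0=u'(0)$ forces $u\equiv0$ for the linear equation.

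\textbf{Step 1: no solutions on $\partial\mathcal D$.} Cases 1--3 go through verbatim, using ODE uniqueness and the maximum principle. Case 5 needs Lemma~\ref{lem:S}. Its proof only uses Lemma~\ref{phiboundbelowN}, which is dimension-free, and the boundary-layer Lemma~\ref{lem:boundarylayer} on $A=(0,a)$. On a single interval that lemma is explicit, via $\cosh$. The one point to watch is that $u$ on $\partial A$ is only bounded by $1$, so the comparison $u\le w_\kappa$ uses $w_\kappa=1$ at both endpoints. This is fine since $0<u<1$.

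\textbf{Step 2: the key case $s=0$.} This is the analogue of Lemma~\ref{lem:u=1}: a Neumann solution $u$ of $-d_uu''=f(u)$ on $(0,L)$ with $u\ge\zeta_{a,L}$ on $[a,L]$ must be $\equiv1$. Here I would exploit the one-dimensional structure instead of sliding. Take $b_0$ large enough that $\zeta_{a,L}(L)>\theta'$, which is possible by Proposition~\ref{prop:subsolzet}. Then $u(L)>\theta'$, so $F(u(L))>0$.

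Conservation of $E=\tfrac{d_u}{2}u'^2+F(u)$ together with $u'(0)=u'(L)=0$ gives $F(u(0))=F(u(L))$. Since $F$ is injective on values where $F>0$, the endpoint values agree whenever both exceed $\theta'$. A nonconstant Neumann solution oscillates between a minimum value $m$ and a maximum value $M$ with $F(m)=F(M)$ and $m<M$. Because $F<0$ on $(0,\theta')$ and $F$ is increasing on $(\theta,1)$, this forces $m<\theta$.

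It remains to rule out $\min u<\theta$, for which I would use the sliding argument of Lemma~\ref{lem:u=1} on the interval, since this is exactly what that lemma proves in any dimension. First fix $\eta$ with $\max V<1-\eta$ and enlarge $b_0$ so that $\zeta_{a,L}(L)>1-\eta$. With $\sigma$ the radius of the compactly supported subsolution bump $V$, we have $\zeta_{a,L}\ge V(\cdot-c)$ for some $c$. Slide the bump over $[0,L]$. At $x=L$ the Neumann endpoint is handled as in the original proof, since $u\ge1-\eta>\max V$ there. At $x=0$, if the bump reaches it, reflect evenly: the even extension of $u$ to $(-L,L)$ is still a solution. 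The conclusion is $u\ge\max V>\theta$ everywhere, so $u$ is superharmonic and hence constant, and therefore $u\equiv1$, which Case 3 excludes. I expect this step, in particular the reflection handling at $x=0$, to be the main point requiring care.

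\textbf{Step 3: the degree.} The degree computation of Proposition~\ref{degree} is unchanged. $H_1$ decouples, $h_1$ has a single zero $s_1\in(0,S)$ with degree $1$, and $\mathcal F(\cdot,s_1)$ maps the convex set $\overline D$ into its interior by the same comparison arguments. So the degree is $1$ and a solution exists in $\mathcal D$. This gives $\zeta_{a,L}<u<1$, $v=s\phi[u]>0$ with $s>0$. Finally, $u$ is nonconstant: a constant $u$ would satisfy $u>\theta'$ on $[a,L]$, hence $u\equiv1$, while $v>0$ forces $f(u)=\beta uv>0$ on $A$, a contradiction.
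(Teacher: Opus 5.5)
Your proof is correct, but it is organized differently from the paper's. The paper never lets $A$ touch $\partial\Omega$. It doubles the configuration to $\tilde\Omega=(-L,L)$, $\tilde A=(-a,a)$, so that $\overline{\tilde A}\subset\tilde\Omega$ exactly as in Theorem~\ref{thm:existenceND}, and reruns the whole degree argument in the subspace of functions even about $x=0$. It modifies only Lemma~\ref{lem:u=1}, by sliding two symmetric bumps, one on each side of $\tilde A$. The Neumann conditions at $x=0$ then come from evenness, before restricting to $(0,L)$.

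You instead stay on the half-interval and check, step by step, that the shared endpoint $x=0$ is harmless. ODE uniqueness plays the role of the Hopf lemma in Cases~1--3. The Dirichlet comparison at both ends of $(0,a)$ gives the boundary layer in Lemma~\ref{lem:S}. The even reflection of $u$ in your sliding step is the paper's symmetric sliding in local form. You could even skip the reflection: a touching at $x=0$ with $0<c_s<\sigma$ would require $-V'(-c_s)\ge 0$, which fails, and $c_s=0$ is excluded by ODE uniqueness.

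The two routes trade off differently. The paper's route reuses Section~\ref{sec:coexistence_higher_dim} almost verbatim, at the cost of checking that $\mathcal F$ and $\phi[\cdot]$ preserve evenness so the degree can be computed in the symmetric subspace. Yours avoids symmetric function spaces and shows directly that the hypothesis $\overline A\subset\Omega$ is inessential in one dimension, at the cost of re-verifying each boundary argument at $x=0$.

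Finally, the energy-conservation paragraph in your Step~2 is superfluous. The sliding alone yields $u\ge\max V>\theta$ on $[0,L]$, which already forces $u\equiv 1$.
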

\begin{proof}
    To prove this fact, we may simply consider $\tilde\Omega=(-L,L)$ and $\tilde A=(-a,a)$. Then we may repeat the proof of Theorem~\ref{thm:existenceND} for prey domain $\tilde\Omega$ and predator domain $\tilde A$ while restricting the functional spaces under consideration to functions that are symmetric about $x=0$. All the arguments proceed identically except the proof of Lemma~\ref{lem:u=1} which we may prove by simultaneously sliding two symmetric solutions of~\eqref{semilinearBall} (one on either side of $\tilde A$) instead of one. In the end, we obtain a symmetric coexistence solution $(\tilde u,\tilde v)$ in $\tilde \Omega$ and $\tilde A$ respectively. Due to the symmetry, we have $\tilde u'(0)=\tilde v'(0)=0$, so when restricted to $\Omega$ and $A$, $(\tilde u|_\Omega,\tilde v|_{A})$ yields a solution to~\eqref{eq:stat1D}.
\end{proof}
\noindent Alternatively, Theorem~\ref{thm:existence1D} could be proved using a reflection argument.

The focus of this section is the behavior of solutions to~\eqref{eq:stat1D} in the limit as $a\to 0$. One may expect that a predator domain of size zero cannot support any predators, but this is not necessarily the case. Consider, for example, fishermen fishing along the shore of a lake---the shoreline has zero area and the fishermen can still catch fish that come close to the shore. Indeed, our first result on this subject  is that as $a\to 0$, the total predator population in solutions to~\eqref{eq:stat1D} does not vanish and that the prey population density approaches a solution to
\begin{equation}\label{eq:unique_limit}
    \begin{cases}
        -d_u u''=f(u) & 0<x<L\\
        u(0)=\frac{\gamma}{\alpha}\\
        u'(L)=0.
    \end{cases}
\end{equation}
The system~\eqref{eq:unique_limit} may have many solutions. Indeed, by taking $L$ arbitrarily large, there are arbitrarily many oscillatory solutions with $0<u<\theta'$. But the following Proposition~\ref{prop:unique_limit} identifies a unique solution such that $u(L)$ is close to $1$ for $L$ large enough, and Theorem \ref{thm:thin_limit_1D} identifies this as the solution of~\eqref{eq:unique_limit} to which solutions of~\eqref{eq:stat1D} converge as $a\to 0$. These results and the remainder of the results in this section will require us to add two additional conditions for the function $f:[0,1]\to\mathbb R$ to the conditions F1-F4 in Section~\ref{sec:model}:
\begin{itemize}
    \item[(F5)] $f$ is continuously differentiable
    \item[(F6)] $f'(1)<0$.
\end{itemize}
These conditions will allow us to consider the linearization of $f$ and control the behavior of solutions when $u\approx 1$. 

\begin{proposition}\label{prop:unique_limit}
    Suppose $f$ satisfies F1-F6. There exists $\eta_0\in(0,1-\theta')$ and $b_0>0$ (at least as large as $b_0$ as in Proposition \ref{prop:subsolzet}) so that if $L>b_0$, then there exists a unique solution $u$ to \eqref{eq:unique_limit} such that $0<u<1$ and $u(L)>1-\eta_0$. Moreover, this solution $u$ is monotone increasing and $u(L)$ is increasing in $L$, and approaches $1$ as $L\to\infty$.
\end{proposition}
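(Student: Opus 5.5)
We use a shooting argument from $x=L$ and the conserved energy. Every solution of $-d_u u''=f(u)$ on $(0,L)$ with $u'(L)=0$ is determined by $p:=u(L)$. Let $U_p$ solve this ODE with $U_p(L)=p$, $U_p'(L)=0$, and set
\[
\tfrac{d_u}{2}(U_p')^2+F(U_p)=F(p).
\]
Fix $p\in(\theta',1)$, so $F(p)>0$ and $f(p)>0$. Then $U_p''(L)=-f(p)/d_u<0$, so moving leftward from $L$ the function $U_p$ decreases. It remains strictly monotone as long as $U_p'\neq0$. By the energy identity, $U_p'=0$ can only occur at a level $s$ with $F(s)=F(p)$. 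Since $F$ is increasing on $(\theta',1)$, $s=p$ is the only such level in $[0,p]$. Consequently, going leftward $U_p$ decreases strictly and reaches $0$ at a finite distance
\[
T(p)=\int_0^p \frac{\sqrt{d_u/2}\,ds}{\sqrt{F(p)-F(s)}},
\]
which is finite because $f(p)\neq0$ makes the singularity at $s=p$ integrable. Similarly, the distance from level $q=\gamma/\alpha$ up to level $p$ is
\[
T_q(p)=\int_q^p \frac{\sqrt{d_u/2}\,ds}{\sqrt{F(p)-F(s)}}.
\]
(If $\gamma/\alpha\ge p$ we will instead take $p$ larger; note $\gamma/\alpha<1$.) A monotone increasing solution of \eqref{eq:unique_limit} with $u(L)=p$ exists exactly when $T_q(p)=L$.

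The plan is as follows.

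\textbf{Step 1 (continuity and limits of $T_q$).} $T_q$ is continuous on $(\max(q,\theta'),1)$. As $p\to1^-$, $F(p)-F(s)\to F(1)-F(s)$, and near $s=1$ we have $F(1)-F(s)\sim \tfrac{|f'(1)|}{2}(1-s)^2$ by (F6). Hence $T_q(p)\to\infty$ logarithmically.

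\textbf{Step 2 (monotonicity of $T_q$ near $p=1$).} We need $T_q'(p)>0$ for $p\in(1-\eta_0,1)$. Substituting $s=p-(p-q)t$ and differentiating gives an expression whose sign is governed by $f$ near $1$. Using (F5)--(F6), $f$ is decreasing on $[1-\eta_0,1]$, and the standard time-map computation then yields strict increase. This is the step I expect to be the main obstacle. The integrand is singular at $s=p$, so the derivative must be handled carefully. I would first try writing $F(p)-F(s)=(p-s)g(p,s)$ with $g$ bounded away from $0$ and differentiating under the integral. Alternatively, monotonicity can be argued via linearization: the variation $w=\partial_p U_p$ solves $-d_u w''=f'(U_p)w$ with $w(L)=1$, $w'(L)=0$, and we would show $w$ does not vanish before $U_p$ reaches $q$. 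Comparing the sign of $f'$ near $1$ with the monotonicity of $U_p$ should make this tractable.

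\textbf{Step 3 (existence and uniqueness).} Set $b_0:=T_q(1-\eta_0)$, enlarged if necessary to dominate the $b_0$ of Proposition~\ref{prop:subsolzet}. For $L>b_0$, Steps 1--2 give a unique $p(L)\in(1-\eta_0,1)$ with $T_q(p)=L$. This $p(L)$ is increasing in $L$ and tends to $1$ as $L\to\infty$, since $T_q$ is an increasing bijection onto $(b_0,\infty)$.

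Uniqueness among all solutions with $0<u<1$ and $u(L)>1-\eta_0$ follows from the shooting description. Any such solution equals $U_p$ with $p=u(L)\in(1-\eta_0,1)\subset(\theta',1)$. By the argument above, $U_p$ is strictly monotone on the whole interval $(0,L)$, since it cannot turn before reaching $0$ and must satisfy $u(0)=q>0$. Therefore $T_q(p)=L$, which determines $p$. Positivity and the bound $u<1$ follow from $q\le U_p\le p<1$.
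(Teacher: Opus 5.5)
\textbf{Where the proposal stands.} Your overall architecture is sound and is essentially the paper's. The energy identity forces any solution with $u(L)\in(\theta',1)$ to be strictly increasing. Such solutions are therefore parametrized by $u(L)$ through the time map $T_q$, and everything reduces to showing that $T_q$ is continuous, tends to $+\infty$, and is strictly increasing for $u(L)$ near $1$. Steps 1 and 3 are fine. Getting existence from the intermediate value theorem on $T_q$, rather than from the sub/supersolution pair $\zeta_{0,L}\le u\le 1$ as the paper does, is a harmless variant.

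\textbf{The gap is Step 2.} This is the real content of the proposition, and you flag it but do not resolve it. The reason you sketch is that $f$ is decreasing on $[1-\eta_0,1]$, so ``the standard time-map computation'' yields $T_q'>0$. That cannot work by itself. Carrying out your substitution gives
\begin{equation*}
T_q'(p)=\frac{\sqrt{d_u/2}}{2(p-q)}\int_q^p\frac{\Theta(p)-\Theta(s)}{\bigl(F(p)-F(s)\bigr)^{3/2}}\,ds,\qquad \Theta(s):=2F(s)-(s-q)f(s).
\end{equation*}
The numerator depends on $f$ over all of $[q,p]$. Near $s=p$ it is positive, because $\Theta'=f-(s-q)f'>0$ close to $1$. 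But when $q<\theta$, nothing in (F1)--(F6) prevents $\Theta(s)>\Theta(p)\approx 2F(1)$ for some $s\in(q,\theta)$: take a tall, narrow negative spike of $f$ in $(q,\theta)$ and $F(1)$ small. So the integrand has no sign. Likewise, writing $F(p)-F(s)=(p-s)g(p,s)$ with $g>0$ only justifies differentiation for each fixed $p<1$; it gives no sign. In fact $g(p,s)\to f(p)\to0$ near $s=p$ as $p\to1^-$, and this degeneration is precisely what makes $T_q'$ positive.

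\textbf{The missing idea} is a dominance estimate in the limit $p\to1^-$, which is how the paper proceeds. Let $\nu=-f'(1)$ and fix $\delta$ small with $1-2\delta>\theta'$. Split $T_q=A+B$, where $B$ is the integral over $[p-\delta,p]$.
\begin{itemize}
\item Since $F(p)-F(s)\ge m>0$ for $q\le s\le p-\delta$, uniformly in $p\in[1-\delta,1]$, $A'(p)$ stays bounded as $p\to1^-$.
\item On $[p-\delta,p]$, (F6) gives $\frac{\nu}{4}t\,(t+2(1-p))\le F(p)-F(p-t)\le \nu t\,(t+2(1-p))$ and $f(p-t)-f(p)\ge\nu t/2$. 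Hence, for some $c>0$,
\begin{equation*}
B'(p)\ge c\int_0^\delta\frac{dt}{\sqrt{t}\,\bigl(t+2(1-p)\bigr)^{3/2}},
\end{equation*}
which is of order $(1-p)^{-1}\to+\infty$.
\end{itemize}
So $T_q'>0$ for $p$ close to $1$.

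\textbf{Your linearization alternative has the same hole.} The bound $f'(U_p)<0$ gives $w=\partial_pU_p\ge1$ only on the stretch where $U_p\ge1-2\delta$. It says nothing down to the level $q$, where $f'$ may be positive and $w$ could a priori vanish. This route can be closed, but only with a quantitative input you do not mention. One option is the Wronskian identity $(w/U_p')'=f(p)/\bigl(d_u(U_p')^2\bigr)$. Combine it with $w\ge1$ at the exit of that stretch, a uniform bound on $\int dx/(U_p')^2$ over the remaining bounded stretch, and $f(p)\to0$. This shows $T_q'(p)=w/U_p'>0$ at the level $q$ for $p$ near $1$.
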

\noindent Equations of the form $-d_u u''=f(u)$ are well studied for a rich variety of functions $f$ (not necessarily satisfying F1-F6). Many results for such equations are found in~\cite{Kor2006,Sch2006} and references therein. The proof of Proposition~\ref{prop:unique_limit} is similar to that of earlier results (differing mainly in the mixed boundary conditions of \eqref{eq:unique_limit}), so it is relegated to Appendix \ref{app:unique_limit_proof}.

        \begin{theorem}\label{thm:thin_limit_1D}
            Suppose $\alpha>\gamma$. Fix $a_0>0$, let $b_0,\eta_0$ be as in Proposition~\ref{prop:unique_limit} and let $L_0>a_0+b_0$ be sufficiently large that $\zeta_{a_0,L_0}(L_0)>1-\eta_0$. For any $0\leq a<a_0$ and fixed $L>L_0$, let $u_a$ and $v_a$ denote a coexistence equilibrium  solution to~\eqref{eq:stat1D} guaranteed by Theorem~\ref{thm:existence1D}. Then the following hold in the limit $a\to 0$:
            \begin{itemize}
                \item[(i)] $u_a$ converges in $C^0$ to the unique solution $u$ to~\eqref{eq:unique_limit} guaranteed by Proposition~\ref{prop:unique_limit}.
                \item[(ii)] $u_a$ converges to $u$ in $C^2$ in every interval $(x_0,L]$ for $0<x_0<L$.
                \item[(iii)] $v_a$ is unbounded in $C^0$, but
                \begin{equation}
                    0<\lim_{a\to 0}\int_0^a v_a(x)\,dx=\frac{d_u\alpha}{\beta\gamma}u'(0)<\infty.
                \end{equation}
                That is, $v_a$ converges to a Dirac mass concentrated at $x=0$.
            \end{itemize}
            
        \end{theorem}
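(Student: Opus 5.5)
The plan is to get uniform (in $a$) estimates from the structure of \eqref{eq:stat1D}, extract a convergent subsequence by Arzel\`a--Ascoli, identify the limit through the uniqueness in Proposition~\ref{prop:unique_limit}, and read off the predator mass from two exact integral identities.

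\textbf{Integral identities.} Integrating the $v$-equation over $(0,a)$ with the Neumann conditions gives $\int_0^a(\alpha u_a-\gamma)v_a=0$. Integrating the $u$-equation over $(0,L)$ gives $\beta\int_0^a u_av_a=\int_0^L f(u_a)$. Combining the two,
\[
\int_0^a v_a=\frac{\alpha}{\beta\gamma}\int_0^L f(u_a)\le \frac{\alpha L\max|f|}{\beta\gamma},
\]
so the total predator mass is bounded uniformly in $a$. Integrating the $u$-equation only over $(0,a)$ instead gives
\begin{equation*}
d_u u_a'(a)=\beta\int_0^a u_av_a-\int_0^a f(u_a)=\frac{\beta\gamma}{\alpha}\int_0^a v_a+O(a).
\end{equation*}
This is the identity behind (iii): once $u_a'(a)\to u'(0)$, it yields $\int_0^a v_a\to \frac{d_u\alpha}{\beta\gamma}u'(0)$.

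\textbf{Uniform bounds and compactness.} By Theorem~\ref{thm:existence1D} and the monotonicity in $a$ from Proposition~\ref{prop:subsolzet}, $u_a\ge\zeta_{a,L}\ge\zeta_{a_0,L}$ on $[a_0,L]$. The same proposition (monotonicity in $L$) together with the choice of $L_0$ gives $u_a(L)>1-\eta_0$.

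On $(a,L)$ we have $|u_a''|\le\max|f|/d_u$ and $u_a'(L)=0$, so $u_a'$ is bounded and equi-Lipschitz on $[a,L]$. On $(0,a)$ we have $u_a'(0)=0$ and $d_u|u_a''|\le\beta v_a+|f|$, so by the mass bound $|u_a'|\le C$ there too. Hence $\mathrm{osc}_{[0,a]}u_a\le Ca$.

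Since $v_a>0$ and $\int_0^a(\alpha u_a-\gamma)v_a=0$, the function $\alpha u_a-\gamma$ must change sign on $[0,a]$. Therefore $|u_a-\gamma/\alpha|\le Ca$ on $[0,a]$.

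Arzel\`a--Ascoli then gives a subsequence with $u_a\to u$ in $C^0[0,L]$, and in $C^1$ on every $[x_0,L]$ (using the equation for $u''$). The limit satisfies $-d_uu''=f(u)$ on $(0,L]$, $u'(L)=0$, $u(0)=\gamma/\alpha$, $0<u\le 1$ and $u(L)\ge 1-\eta_0$. The strong maximum principle rules out $u$ touching $1$.

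\textbf{Identifying the limit and finishing.} The uniqueness statement of Proposition~\ref{prop:unique_limit} identifies $u$ as the prescribed solution of \eqref{eq:unique_limit}. Since every subsequence has this same limit, the whole family converges; this gives (i). Part (ii) follows because $u_a''=-f(u_a)/d_u\to -f(u)/d_u$ uniformly on $[x_0,L]$.

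For (iii), I would show $u_a'(a)\to u'(0)$ in three pieces:
\begin{itemize}
\item By equi-Lipschitzness of $u_a'$ on $[a,L]$, $u_a'(a)-u_a'(x_0)=O(x_0)$ uniformly in $a$.
\item By $C^1$ convergence on $[x_0,L]$, $u_a'(x_0)\to u'(x_0)$.
\item Since $u\in C^1[0,L]$ (its second derivative is bounded), $u'(x_0)\to u'(0)$ as $x_0\to0$.
\end{itemize}
Letting $a\to0$ and then $x_0\to0$ gives the limit. Plugging it into the identity above yields the formula in (iii).

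Positivity of the limit mass uses $u'(0)>0$. The solution $u$ is monotone increasing by Proposition~\ref{prop:unique_limit}. If $u'(0)=0$, ODE uniqueness would force $u\equiv\gamma/\alpha$, which contradicts $u(L)>1-\eta_0$ unless $f(\gamma/\alpha)=0$ with $\gamma/\alpha>1-\eta_0$, a degenerate case I would exclude separately.

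Unboundedness of $v_a$ in $C^0$ is then immediate: a positive mass is supported on an interval of length $a\to0$. The same fact shows $v_a\rightharpoonup$ a Dirac mass at $0$.

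\textbf{Main obstacle.} I expect the main difficulty to be the uniform derivative control inside $(0,a)$ and the step $u_a'(a)\to u'(0)$, since $v_a$ blows up there. The mass bound is exactly what keeps $u_a'$ bounded on $(0,a)$, so this is where I would be most careful.
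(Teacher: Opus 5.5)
Your skeleton is the paper's. You use the mass identity $\int_0^a v_a=\frac{\alpha}{\beta\gamma}\int_0^L f(u_a)$, the uniform $C^1$ bound it yields, and Arzel\`a--Ascoli. You get $u(0)=\gamma/\alpha$ from $\int_0^a(\alpha u_a-\gamma)v_a=0$; your intermediate-value version is a cleaner form of the paper's contradiction argument. You then identify the limit through the uniqueness in Proposition~\ref{prop:unique_limit}. One step, however, fails as written.

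The failing step is positivity of $u'(0)$. From $u(0)=\gamma/\alpha$ and $u'(0)=0$, ODE uniqueness forces $u\equiv\gamma/\alpha$ only if $f(\gamma/\alpha)=0$. In the generic case $f(\gamma/\alpha)\neq 0$, that initial value problem has a nonconstant solution, and your argument does not rule it out. Monotonicity alone does not help when $f(\gamma/\alpha)<0$, since then $u''(0)>0$ and $u$ really does increase near $0$.

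The missing ingredient is the conserved energy $\frac{d_u}{2}(u')^2+F(u)\equiv F(u(L))$. Since $u(L)>1-\eta_0>\theta'$, we have $F(u(L))>0$, so $u'(0)=0$ would give $F(\gamma/\alpha)=F(u(L))>0$. As $F$ is injective on $\{F>0\}=(\theta',1]$, this forces $\gamma/\alpha=u(L)$. Monotonicity then makes $u$ constant, i.e.\ $f(\gamma/\alpha)=0$ with $\gamma/\alpha\in(\theta',1)$, contradicting (F2). Alternatively, cite the proof of Proposition~\ref{prop:unique_limit}, which shows $u'>0$ on $[0,L)$. Your ``degenerate case'' is vacuous: the only zero of $f$ in $(0,1)$ is $\theta<\theta'<1-\eta_0$, so nothing needs to be excluded separately.

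There are two smaller points. First, passing to the limit in $u_a(L)>1-\eta_0$ only gives $u(L)\ge 1-\eta_0$, while the uniqueness in the proposition is stated for $u(L)>1-\eta_0$. Use your bound $u\ge\zeta_{a_0,L}$ on $[a_0,L]$, which gives $u(L)\ge\zeta_{a_0,L}(L)\ge\zeta_{a_0,L_0}(L_0)>1-\eta_0$. Second, for (iii) your three-piece argument for $u_a'(a)\to u'(0)$ is correct but unnecessary. Uniform convergence in your own mass identity gives $\int_0^L f(u_a)\to\int_0^L f(u)=d_u u'(0)$ directly, which is how the paper concludes.
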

        \begin{proof}
            First, observe that
            \begin{equation}\label{eq:total_predators}
            \begin{split}
                \int_0^a v_a\,dx&=\frac{\alpha}{\gamma}\int_0^a u_av_a\,dx\\
                &=\frac{\alpha}{\beta\gamma}\int_0^a\left(d_u u_a''+f(u_a)\right)\,dx\\
                &=\frac{\alpha}{\beta\gamma}\left(-\int_a^L d_uu_a''\,dx+\int_0^af(u_a)\,dx\right)\\
                &=\frac{\alpha}{\beta\gamma}\left(\int_0^L f(u_a)\,dx\right)\\
                &\leq\frac{\alpha L}{\beta\gamma}\Vert f\Vert_{C^0}.
            \end{split}
            \end{equation}
            Thus, $v_a$ is bounded in $L^1$ in the limit $a\to 0$.

            Now we show that $u_a$ is bounded in $C^1$. We know $\zeta_{a,L}<u_a<1$, so $u_a$ is bounded in $C^0$. For $x\in(0,L)$, we calculate
            \begin{align*}
                |u_a'(x)|&\leq\int_0^x \left|-f(u_a)+\beta 1_{(0,a)}u_av_a\right|\,dx\\
                &\leq L\Vert f\Vert_{C^0}+\beta\Vert v_a\Vert_{L^1}.
            \end{align*}
            Since $v_a$ is bounded in $L^1$, $u_a$ is bounded in $C^1$. Thus, by the Arzel\`a--Ascoli Theorem, the set $\{u_a\}$ is precompact in $C^0$.

            Suppose that $u_a$ does not converge to the unique solution $u$ to~\eqref{eq:unique_limit} with $u(L)>1-\eta_0$ guaranteed by Proposition~\ref{prop:unique_limit}. Then due to the precompactness of $\{u_a\}$, there exists a sequence $a_n$ in $(0,a_0)$ and a corresponding sequence $u_n:=u_{a_n}$ converging to some $\hat u\neq u$ in $C^0$. For any $x_0\in (0,L)$, we have that for $n$ sufficiently large, $a_n<x_0$ so $-d_u u_n''=f(u_n)$ on $(x_0,L)$. Integrating twice, we may write for any $x\in (x_0,L)$,
            \begin{equation}\label{eq:u_n_representation}
                u_n(x)=u_n(x_0)+u_n'(x_0)(x-x_0)-\frac{1}{d_u}\int_{x_0}^x (x-s)f(u_n(s))\,ds.
            \end{equation}
            Since $f$ is uniformly continuous and $u_n\to \hat u$ in $C^0$, we conclude that $u_n'(x_0)$ converges to some value, say, $d\in\mathbb R$. Passing~\eqref{eq:u_n_representation} to a limit, we have
            \begin{equation}\label{eq:uhat_integral_identity}
                \hat u(x)=\hat u(x_0)+d(x-x_0)-\frac{1}{d_u}\int_{x_0}^x (x-s)f(\hat u(s))\,ds.
            \end{equation}
            We conclude that $\hat u$ is twice differentiable and satisfies
            \begin{equation}\label{eq:uhat_eqn}
                -d_u \hat u''=f(\hat u)
            \end{equation}
            on $(x_0,L)$. But since $x_0\in (0,a_0)$ is arbitrary, we conclude that $\hat u$ satisfies~\eqref{eq:uhat_eqn} on $(0,L)$.

            From~\eqref{eq:uhat_integral_identity}, we may also calculate
            \begin{align*}
                \hat u'(L)&=d-\frac{1}{d_u}\int_{x_0}^L f(\hat u(s))\,ds\\
                &=d-\lim_{n\to\infty}\frac{1}{d_u}\int_{x_0}^L f(u_n(s))\,ds\\
                &=d+\lim_{n\to\infty}\int_{x_0}^L u_n''\,ds\\
                &=d+\lim_{n\to\infty}-u_n'(x_0)\\
                &=0.
            \end{align*}

            Now we show that $\hat u(0)=\gamma/\alpha$. Let $2\delta=\hat u(0)-\gamma/\alpha$. If $\delta\neq 0$, then either $\delta>0$ or $\delta<0$. Suppose $\delta>0$. Letting $\Lambda$ be an upper bound for both $\Vert v_a\Vert_{L^1}$ and $\Vert u_a\Vert_{C^1}$, take $n$ sufficiently large that $a_n<\delta/\Lambda$. By the Mean Value Theorem, we must have $u_n>\frac{\gamma}{\alpha}+\delta$ for all $x\in (0,a_n)$. Then, with $v_n:=v_{a_n}$,
            \begin{equation*}
                0=\int_0^{a_n}\left(v_n''+(\alpha u_n-\gamma)v_n\right)>\Lambda\alpha\delta>0,
            \end{equation*}
            a contradiction. The $\delta<0$ case is exactly similar. We conclude that $\delta=0$ and thus $\hat u(0)=\frac{\gamma}{\alpha}$.

            Thus, we have shown that $\hat u$ is a solution to~\eqref{eq:unique_limit}.
            For each $n$, we know that
            \begin{equation}
                u_n(L)>\zeta_{a_n,L}(L)>\zeta_{a_0,L}(L)>\zeta_{a_0,L}(L_0)>\zeta_{a_0,L_0}(L_0)>1-\eta_0,
            \end{equation}
            from which we conclude that $1-\eta_0<\hat u(L)<1$. But due to Proposition~\ref{prop:unique_limit}, there is only one solution $u$ to~\eqref{eq:unique_limit} with $1-\eta_0<u(L)<1$, and we have assumed $\hat u\neq u$. This is a contradiction, and thus we conclude that $\lim_{a\to 0} u_a=u$ in $C^0$. We may repeat the derivation of~\eqref{eq:uhat_integral_identity} to see that $u_n$ converges to $u$ in $C^2(x_0,L)$ for each $x_0\in (0,L)$. Finally, from~\eqref{eq:total_predators}, we have in the limit as $a\to 0$,
            \begin{equation}
                \lim_{a\to 0}\int_0^a v_a\,dx=\frac{\alpha}{\beta\gamma}\int_0^L f(u)\,dx=-\frac{d_u\alpha}{\beta\gamma}\int_0^L u''\,dx=\frac{d_u\alpha}{\beta\gamma}u'(0).
            \end{equation}
            Integrating~\eqref{eq:unique_limit} from $0$ to $L$, we see that
            \begin{equation*}
                u'(0)=\sqrt{\frac{2}{d_u}F(u(L)}.
            \end{equation*}
            Since $u(L)>1-\eta_0>\theta'$, each of $F(u(L))$, $u'(0)$, and $\lim_{a\to 0}\int_0^a v_a\,dx$ are positive.
        \end{proof}

        From Theorem~\ref{thm:thin_limit_1D}, we see that as $a\to 0^+$, the total predator population of coexistence equilibria for~\eqref{eq:higher_dimensional_model} with $\Omega=(0,L)$ (for large $L$) and $A=(0,a)$ approaches a finite, positive limit. This holds even if there is not a unique coexistence equilibrium $(u_a,v_a$) for each $a$ sufficiently small, but if there is a sufficiently regular family of such equilibria, we could compute an asymptotic expansion for the total predator population:
        \begin{equation*}
        V(a)=\int_0^a v_a\,dx=V(0)+V'(0)a+O(a^2).
        \end{equation*}
        Theorem~\ref{thm:thin_limit_1D} already derives the constant term $V(0)$, so it remains to derive the linear coefficient $V'(0)$. We will do this using formal asymptotic expansions, assuming sufficient regularity of the map $a\mapsto (u_a,v_a)$. In fact, this regularity is limited as $u_a$ is of necessity not twice differentiable at $x=a$. Instead, we will consider regularity of $u_a$ in $(0,a)$ and $(a,L)$ separately. To facilitate this, we perform a change of coordinates for solutions $u_a$ and $v_a$. Letting $u_1=u_a\big|_{(0,a)}$ and $u_2=u_a\big|_{(a,L)}$, we apply the coordinate transformation $x\to x/a$ to $u_1$ and $v_a$ and the coordinate transformation $x\to (x-a)/(L-a)$ to $u_2$ to obtain equations for $u_1$, $u_2$, and $v=v_a$ (we drop the subscript $a$ for convenience) in these new coordinates:            \begin{equation}\label{eq:higher_dimensional_model_new_coordinates}
            \begin{cases}
                \frac{d_u}{a^2} u_1''+f(u_1)-\beta u_1 v=0 & 0<x<1\\
                \frac{d_u}{(L-a)^2} u_2''+f(u_2)=0 & 0<x<1\\
                \frac{d_v}{a^2} v''-\gamma v+\alpha u_1v=0 & 0<x<1\\
                u_1'(0)=u_2'(1)=v'(0)=v'(1)=0 & \\
                u_1(1)=u_2(0) & \\
                \frac{1}{a}u_1'(1)=\frac{1}{L-a}u_2'(0). & 
            \end{cases}
        \end{equation}
            We consider the following ans\"atze for the asymptotic expansions of $u_1$, $u_2$, and $v$ in small $a$, informed by Theorem~\ref{thm:thin_limit_1D}:        
            \begin{equation}\label{eq:small_a_ansatze}
                \begin{split}
                    u_1(x)&=\frac{\gamma}{\alpha}+a u_{1,1}(x)+a^2u_{1,2}(x)+O(a^3)\\
                    u_2(x)&=w_1(Lx)+a u_{2,1}(x)+O(a^2)\\
                    v(x)&=\frac{1}{a}\frac{d_u\alpha}{\beta\gamma}w_1'(0)+v_0(x)+a v_1(x)+a^2v_2(x)+O(a^3),
                \end{split}
            \end{equation}
            Where $w_1$ is the solution to~\eqref{eq:unique_limit} guaranteed by Proposition~\ref{prop:unique_limit}. Note that the expansion for $v$ leads in order $a^{-1}$ since we expect the predator density to blow up as $a\to 0$ while the total predator population approaches a positive constant due to Theorem~\ref{thm:thin_limit_1D}. 

            Substituting~\eqref{eq:small_a_ansatze} into~\eqref{eq:higher_dimensional_model_new_coordinates} and comparing terms of like order in $a$, we see that $u_{1,1}$ solves
            \begin{equation}
                \begin{cases}
                    u_{1,1}''=w_1'(0) & 0<x<1\\
                    u_{1,1}'(0)=0 & \\
                    u_{1,1}'(1)=w_1'(0)
                \end{cases}
            \end{equation}
            We observe that the solvability condition for this Neumann problem is already exactly met. We can determine $u_{1,1}$ up to an additive constant:
            \begin{equation}
                u_{1,1}(x)=\frac{1}{2}w_1'(0)x^2+c.
            \end{equation}
            The value of $c$ is determined by the solvability condition for $v_2$ which solves
            \begin{equation}
                \begin{cases}
                    d_vv_2''=-\frac{d_u\alpha^2}{\beta\gamma}w_1'(0) u_{1,1} & 0<x<1\\
                    v_2'(0)=v_2'(1)=0.
                \end{cases}
            \end{equation}
            We must have $\int_0^1 v_2''\,dx=-\frac{d_u\alpha^2}{\beta\gamma}w_1'(0)\int_0^1u_{1,1}\,dx=0$. Thus, we find
            \begin{equation}
                u_{1,1}(x)=\frac{1}{2}w_1'(0)\left(x^2-\frac{1}{3}\right).
            \end{equation}
            Next we use this solution for $u_{1,1}$ to find that $u_{2,1}$ and $u_{1,2}$ solve
            \begin{equation}\label{eq:u21}
                \begin{cases}
                    \frac{d_u}{L^2}u_{2,1}''+u_{2,1}f'\left(w_1(Lx)\right)=\frac{2}{L}f\left(w_1(Lx)\right) & 0<x<1\\
                    u_{2,1}'(1)=0 & \\
                    u_{2,1}(0)=\frac{1}{3}w_1'(0),
                \end{cases}
            \end{equation}
            and
            \begin{equation}
                \begin{cases}
                    d_u u_{1,2}''=-f\left(\frac{\gamma}{\alpha}\right)+\frac{\beta\gamma}{\alpha} v_0+\frac{\alpha}{2L\gamma}\left(w_1'(0)\right)^2\left(x^2-\frac{1}{3}\right) & 0<x<1\\
                    u_{1,2}'(0)=0 & \\
                    u_{1,2}'(1)=\frac{1}{L}u_{2,1}'(0).
                \end{cases}
            \end{equation}
            Thus, $v_0$ is determined by the solvability condition
            \begin{equation}
                \frac{1}{L}u_{2,1}'(0)=\int_0^1 u_{1,2}''\,dx=\frac{1}{d_u}\left(\frac{\beta\gamma}{\alpha}v_0-f\left(\frac{\gamma}{\alpha}\right)\right),
            \end{equation}
            leading to
            \begin{equation}\label{eq:v0}
                v_0=\frac{\alpha}{\beta\gamma}\left(\frac{d_u}{L}u_{2,1}'(0)+f\left(\frac{\gamma}{\alpha}\right)\right).
            \end{equation}
            Using~\eqref{eq:v0}, and letting $w_2$ be $u_{2,1}$ represented in our original coordinates, we obtain
            \begin{equation}\label{eq:V_exp}
                V(a)=\int_0^a v_a(x)\,dx=\frac{d_u\alpha}{\beta\gamma}w_1'(0)+\frac{\alpha}{\beta\gamma}\left(d_u w_2'(0)+f\left(\frac{\gamma}{\alpha}\right)\right)a+O(a^2),
            \end{equation}
            where $w_1$ is the solution to~\eqref{eq:unique_limit} guaranteed by Proposition~\ref{prop:unique_limit} and $w_2$ is a solution to
            \begin{equation}\label{eq:w2}
                \begin{cases}
                    d_uw_2''+w_2 f'(w_1)=\frac{2}{L}f(w_1) & 0<x<L\\
                    w_2'(L)=0 & \\
                    w_2(0)=\frac{1}{3}w_1'(0).
                \end{cases}
            \end{equation}

        Depending on the parameters $\alpha$, $\beta$, $\gamma$, $f$, $d_u$, $d_v$, and $L$, the value of $V'(a)$ may be positive or negative. The following Theorem shows that when $L$ is large, the value of $V'(a)$ approximately proportional to $f(\gamma/\alpha)$.
        \begin{theorem}\label{thm:thin_limit_slope}
            In the large $L$ limit, the linear coefficient in~\eqref{eq:V_exp} is
            \begin{equation}\label{eq:linear_term_asymptotics}
                \lim_{L\to\infty}V'(0)=\lim_{L\to\infty}\frac{\alpha}{\beta\gamma}\left(d_u w_2'(0)+f\left(\frac{\gamma}{\alpha}\right)\right)=\frac{2}{3}\frac{\alpha}{\beta\gamma}f\left(\frac{\gamma}{\alpha}\right).
            \end{equation}
        \end{theorem}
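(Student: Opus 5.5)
The plan is to solve the linear problem~\eqref{eq:w2} almost explicitly, using two special solutions of the linearized operator $\mathcal L h := d_u h'' + f'(w_1)h$ that come from the translation and scaling structure of $-d_u w_1''=f(w_1)$.

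\textbf{Step 1: two special solutions.} By (F5), differentiating $d_u w_1''+f(w_1)=0$ gives $\mathcal L(w_1')=0$. A direct computation gives
\[
d_u (x w_1')'' = 2d_u w_1'' + x\,d_u w_1''' = -2f(w_1) - x f'(w_1)w_1',
\]
that is, $\mathcal L(x w_1') = -2f(w_1)$. Hence $-\tfrac1L x w_1'$ is a particular solution of $\mathcal L w = \tfrac2L f(w_1)$.

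\textbf{Step 2: decompose $w_2$.} I write
\[
w_2 = -\tfrac{x}{L}w_1' + w_1' - \tfrac23 w_1' + e = -\tfrac{x}{L}w_1' + \tfrac13 w_1' + e,
\]
where $e$ solves $\mathcal L e=0$. I then check the boundary conditions.
\begin{itemize}
\item At $x=0$: $w_2(0)=\tfrac13 w_1'(0)+e(0)$, so the condition $w_2(0)=\tfrac13 w_1'(0)$ forces $e(0)=0$.
\item At $x=L$: using $w_1'(L)=0$ and $d_u w_1''(L)=-f(w_1(L))$, we get $w_2'(L)= \tfrac{2}{3}\tfrac{f(w_1(L))}{d_u}+e'(L)$. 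So the condition $w_2'(L)=0$ forces $e'(L)=-\tfrac{2}{3d_u}f(w_1(L))$.
\end{itemize}

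\textbf{Step 3: compute $w_2'(0)$.} Using $d_u w_1''(0)=-f(w_1(0))=-f(\gamma/\alpha)$,
\[
d_u w_2'(0) = -\tfrac{d_u}{L}w_1'(0) - \tfrac13 f(\gamma/\alpha) + d_u e'(0).
\]
The first term tends to $0$, because $w_1'(0)=\sqrt{2F(w_1(L))/d_u}\le \sqrt{2F(1)/d_u}$ is bounded. Therefore \eqref{eq:linear_term_asymptotics} follows once I show $e'(0)\to 0$ as $L\to\infty$. Adding $f(\gamma/\alpha)$ and multiplying by $\alpha/(\beta\gamma)$ then gives exactly $\tfrac23\tfrac{\alpha}{\beta\gamma}f(\gamma/\alpha)$.

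\textbf{Step 4 (the main obstacle): $e'(0)\to0$.} Write $e=\mu\psi_L$, where $\psi_L$ solves $\mathcal L\psi_L=0$ with $\psi_L(0)=0$ and $\psi_L'(0)=1$. Then $\mu=e'(0)=-\tfrac{2}{3d_u}f(w_1(L))/\psi_L'(L)$. By Proposition~\ref{prop:unique_limit}, $w_1(L)\to1$, so the numerator tends to $0$. It therefore suffices to show that $\psi_L'(L)$ stays bounded away from $0$, and in fact I expect it to grow exponentially. This bound is also what makes $w_2$ well defined, since it is the nondegeneracy condition $\psi_L'(L)\neq0$.

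To prove it, I would use the convergence of $w_1=w_1^{(L)}$, as $L\to\infty$, locally uniformly to the monotone half-line solution $W$ of $-d_uW''=f(W)$, $W(0)=\gamma/\alpha$, $W(\infty)=1$. This follows from the energy identity $\tfrac{d_u}{2}w_1'^2+F(w_1)=F(w_1(L))$ and monotonicity. By (F6) there is $\kappa>0$ with $f'(w_1)\le -\kappa$ once $w_1\ge 1-\eta$, which holds on $[x_1,L]$ for some fixed $x_1$ independent of $L$.

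On $[x_1,L]$ the equation is $d_u\psi''=-f'(w_1)\psi\ge \kappa\psi$. If $\psi_L(x_1)>0$ and $\psi_L'(x_1)>0$, a Riccati or comparison argument with $\cosh$ and $\sinh$ gives $\psi_L'(L)\ge c\,e^{\sqrt{\kappa/d_u}(L-x_1)}$. That these signs hold at $x_1$ uniformly in $L$ should come from the limit problem: $\Psi:=W'/W'(0)$-type comparisons show that the half-line solution with $\Psi(0)=0$, $\Psi'(0)=1$ is positive and increasing on $[0,x_1]$, and continuous dependence transfers this to $\psi_L$ for large $L$. This is the step I expect to need the most care. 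If positivity fails, I would instead use the exponential dichotomy of $\mathcal L$ near $1$. In that setting, the decaying solution of the limit operator is proportional to $W'$, and $W'(0)\neq0$ rules out the case where the solution with $\psi(0)=0$ lies in the decaying direction. So $\psi_L$ must carry a nonzero growing component, and $|\psi_L'(L)|\to\infty$.
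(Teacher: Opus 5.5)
Your proposal is correct, and its first half coincides with the paper's proof. Your $e=w_2+\bigl(\frac{x}{L}-\frac13\bigr)w_1'$ is exactly the paper's auxiliary function $z$, with the same data $z(0)=0$ and $z'(L)=-\frac{2}{3d_u}f(w_1(L))$.

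From there the routes differ.
\begin{itemize}
\item \textbf{The paper.} It observes that $y=\partial_L w_1$ solves the same linear equation with $y(0)=0$ and $y'(L)=f(w_1(L))/d_u$. Hence $z=-\frac23 y$, and so $w_2(L)=z(L)=-\frac23\,\frac{d}{dL}w_1(L)=-\frac{2}{3\,\mathcal L'(q)}\to0$, using the appendix estimate on the quadrature map \eqref{eq:L_map}. It then integrates the Wronskian of $w_2$ and $w_1'$ over $[0,L]$, which reduces the limit of $w_2'(0)$ to showing $f(w_1(L))\,w_2(L)\to0$.
\item \textbf{Your route.} You read $w_2'(0)$ directly off the decomposition, so you need $e'(0)\to0$, i.e.\ a lower bound on $\psi_L'(L)$. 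You obtain this qualitatively, from convergence to the half-line profile $W$ and convexity where $f'(w_1)<0$.
\end{itemize}

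These are the same estimate in disguise. Constancy of the Wronskian of $e$ and $w_1'$ gives $d_u\,e'(0)\,w_1'(0)=e(L)\,f(w_1(L))$. Comparing $\psi_L$ with $w_1'$ and with $\partial_L w_1$ gives $\psi_L'(L)=w_1'(0)\,\mathcal L'(q)$.

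What each buys:
\begin{itemize}
\item The paper's route is shorter given the appendix, and it interprets $z$ as a domain-length derivative, at the cost of differentiating $w_1$ in $L$.
\item Yours needs neither the quadrature nor differentiation in $L$. As a byproduct it proves $\psi_L'(L)\neq0$, i.e.\ unique solvability of \eqref{eq:w2}, which the paper takes for granted.
\end{itemize}

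Two small corrections to Step~4.

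\emph{First, monotonicity of $\Psi$.} The claim that $\Psi$ is increasing on $[0,x_1]$ is not justified: where $f'(W)>0$ it is concave. It is also not needed; your fallback argument supplies what you actually use.
\begin{itemize}
\item $\Psi>0$ on $(0,\infty)$ by Sturm separation against $W'>0$.
\item On $[x_1,\infty)$, $\Psi$ is strictly convex. Suppose $\Psi'\le0$ there. Then $\Psi$ decreases to a limit, which must be $0$, since otherwise $\Psi''\ge\kappa\Psi/d_u$ would force $\Psi'\to+\infty$. So $\Psi,\Psi'\to0$.
\item The constant Wronskian $\Psi W''-\Psi' W'$ would then vanish, contradicting its value $-W'(0)$ at $x=0$.
\item Hence $\Psi(x_2)>0$ and $\Psi'(x_2)>0$ at some $x_2\ge x_1$.
\end{itemize}
Now use $C^1$ convergence of $\psi_L$ to $\Psi$ on $[0,x_2]$, together with convexity of $\psi_L$ on $[x_2,L]$. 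This gives $\psi_L'(L)\ge\frac12\Psi'(x_2)$ for large $L$, so $|e'(0)|\lesssim f(w_1(L))\to0$. No exponential bound is needed.

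\emph{Second, the formula for $w_1'(0)$.} The energy identity gives $w_1'(0)=\sqrt{2\bigl(F(w_1(L))-F(\gamma/\alpha)\bigr)/d_u}$, not $\sqrt{2F(w_1(L))/d_u}$; the paper makes the same slip. It is still bounded, which is all you use.
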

        \begin{proof}
            Let $w_1$ be the monotone solution to~\eqref{eq:unique_limit} in $[0,L]$ guaranteed by Proposition~\ref{prop:unique_limit} for large $L$. Let $w_2$ be the solution to~\eqref{eq:w2}. We begin by showing that $w_2(L)$ vanishes as $L\to\infty$. We may differentiate $d_u w_1''+f(w_1)=0$ to see that $w_1'$ solves the linear equation
            \begin{equation}\label{eq:w1_prime}
               d_u (w_1')''+f'(w_1)w_1'=0.
            \end{equation}
            Next, we will introduce two auxiliary functions. Define
            \begin{equation*}
                z(x)=w_2(x)+\left(\frac{x}{L}-\frac{1}{3}\right)w_1'(x)
            \end{equation*}
            Then $z$ solves
            \begin{align*}
                d_uz''+f'(w_1)z&=d_uw_2''+d_u\left(\frac{x}{L}-\frac{1}{3}\right)w_1'''+\frac{2d_u}{L}w_1''+f'(w_1)w_2+\left(\frac{x}{L}-\frac{1}{3}\right)f'(w_1)w_1'\\
                &=\left(d_uw_2''+f'(w_1)w_2\right)+\left(\frac{x}{L}-\frac{1}{3}\right)\left(d_u w_1'''+f'(w_1)w_1'\right)-\frac{2}{L}f(w_1)\\
                &=0.
            \end{align*}
            We also have $z(0)=w_2(0)-w_1'(0)/3=0$ and
            \begin{equation*}
                z'(L)=w_2'(L)+\frac{1}{L}w_1'(L)+\frac{2}{3}w_1''(L)=-\frac{2}{3}\frac{f(w_1(L))}{d_u}.
            \end{equation*}
            Our next auxiliary function is $y$ given by
            \begin{equation*}
                y=\frac{\partial w_1}{\partial L}.
            \end{equation*}
            Differentiating $d_u w_1''+f(w_1)=0$ with respect to $L$, we see that that $y$ solves the same equation as $w_1'$ and $z$: $-d_uy''+f'(w_1)y=0$. For boundary conditions, we have $y(0)=0$ since $w_1(0)=\gamma/\alpha$ independent of $L$. We may differentiate $w_1'(L)=0$ with respect to $L$ (using the chain rule) to obtain $y'(L)+w_1''(L)=0$, from which we conclude
            \begin{equation*}
                y'(L)=\frac{f(w(L))}{d_u}.
            \end{equation*}
            
            Since $y$ and $z$ both satisfy the same linear differential equation and $y(0)=z(0)=0$, we conclude that they are scalar multiples of each other. From their boundary conditions at $L$, we see that $z=-\frac{2}{3}y$, and therefore
            \begin{equation}
                w_2(L)=w_2(L)+\frac{2}{3}w_1'(L)=z(L)=-\frac{2}{3}y(L).
            \end{equation}
            Thus, to show that $w_2(L)$ is bounded, it suffices to show that $y(L)$ is bounded. Denote $w_1(L)$ by $q$ as in the proof of Proposition~\ref{prop:unique_limit} (in Appendix \ref{app:unique_limit_proof}). Since $q$ depends on $L$, we may write it as $q=q(L)$. Then we have $y(L)=q'(L)$. In the proof of Proposition~\ref{prop:unique_limit}, we see that for large $L$, this function $q(L)$ is invertible and we may write $L=\mathcal L(q)$ as~\eqref{eq:L_map}. Moreover, we also showed in this same proof that for sufficiently large $q$ (or equivalently, sufficiently large $L$), $\mathcal L$ is increasing and
            \begin{equation*}
                \lim_{q\to 1}\mathcal L(q)=\lim_{q\to 1}\mathcal L'(q)=\infty.
            \end{equation*}
            Therefore, 
            \begin{equation*}
                \lim_{L\to\infty}-\frac{3}{2}w_2(L)=\lim_{L\to\infty}y(L)=\lim_{L\to\infty}q'(L)=\frac{1}{\lim_{q\to 1}\mathcal L'(q)}=0.
            \end{equation*}
        
            Now, we derive the asymptotic value of $w_2'(0)$ for large $L$. Differentiating $d_u w_1''+f(w_1)=0$, we see that $w_1'$ satisfies
            \begin{equation}\label{eq:w1_differentiated}
                d_u w_1'''+f'(w_1)w_1'=0.
            \end{equation}
            Multiplying~\eqref{eq:w2} by $w_1'$ and multiplying~\eqref{eq:w1_differentiated} by $w_2$, and subtracting the second from the first, we have
            \begin{equation}
                d_u(w_2''w_1'-w_1'''w_2)=\frac{2}{L}f(w_1)w_1'
            \end{equation}
            Integrating by parts from $0$ to $L$, we have
            \begin{equation}
                d_u\left(w_2'(L)w_1'(L)-w_2'(0)w_1'(0)-w_1''(L)w_2(L)+w_1''(0)w_2(0)\right)=\frac{2}{L}(F(w_1(L))-F(\gamma/\alpha)).
            \end{equation}
            We may simplify the left hand side while (since $F$ is bounded), the right hand vanishes as $L\to\infty$:
            \begin{equation}
                \lim_{L\to\infty}\left(-d_u w_2'(0)w_1'(0)+f(w_1(L))w_2(L)-\frac{1}{3}f\left(\gamma/\alpha\right)w_1'(0)\right)=0.
            \end{equation}
            Since both $f(w_1(L))$ and $w_2(L)$ vanish as $L\to\infty$ and $w_1'(0)$ is bounded away from $0$ (as it approaches $\sqrt{2F(1)}$ as $L\to\infty$), we conclude that
            \begin{equation*}
                \lim_{L\to\infty}w_2'(0)=-\frac{1}{3d_u}f\left(\frac{\gamma}{\alpha}\right).
            \end{equation*}
            Substituting this into~\eqref{eq:V_exp}, the result is proved.
        \end{proof}

        The surprising result of Theorem \ref{thm:thin_limit_slope} is that the total predator population may actually be higher at $a=0$ than for small $a>0$. This means that if the predators are very efficient ($\gamma/\alpha<\theta$), then they are better off restricting themselves to a small territory as $V'(a)<0$. Indeed, if highly efficient predators occupied a small interval $(0,a)$ with $a>0$, then they would consume most of the prey immediately when it diffuses into the predation zone, leaving most of the interval $(0,a)$ devoid of prey, and therefore devoid of predators as well. Consequently, the main effect of occupying an interval of length $a$ as opposed to a single point at $x=0$ is to effectively decrease the size of the prey domain, limiting the amount of prey available and decreasing the total predator population at equilibrium. We will examine this phenomenon numerically in Section~\ref{sec:numerics}.

\section{Further Properties and Open Problems}\label{sec:numerics}

We have explored several properties of solutions to the model~\eqref{eq:higher_dimensional_model} analytically. There are many further properties of solutions left to be studied. In this section, we demonstrate several such properties numerically and provide conjectures to motivate future analytical studies. The purpose of this section is not to provide a rigorous numerical analysis, but rather to illustrate and complement the analytical results of the previous sections. We use numerical simulations to (i) visualize coexistence equilibria, (ii) explore how the size of the exclusion zone influences solution behavior, and (iii) highlight dynamical phenomena that suggest directions for future analytical study.

\subsection{Computations of equilibria 1D}

Of principal interest is how solutions to~\eqref{eq:higher_dimensional_model} depend on $A$ and $\Omega$. Theorems~\ref{thm:existence1D} and~\ref{thm:existenceND} give conditions for existence of coexistence solutions, and in Section~\ref{sec:thin_limit_1D}, we consider the asymptotic case when $A$ is small. But to explore the dependence of solutions on $A$ and $\Omega$ in generality, we turn to numerical solutions. So that this dependence may be understood in the simplest possible setting, we solve~\eqref{eq:higher_dimensional_model} numerically in 1D. Specifically, we consider the setting where $\Omega=(0,L)$ for some $L>0$ and $A=(0,a)$ for some $0<a<L$. Then the question of how solutions depend on $A$ and $\Omega$ reduces simply to the question of how they depend on the two numbers $a$ and $L$. However, even in this simple setting, we see that the model~\eqref{eq:higher_dimensional_model} gives rise to a remarkable variety of interesting solution behaviors. We show many such behaviors in figures in this section. Table~\ref{tab:params} lists the parameters used to generate the solutions shown in each figure. The bistable growth function $f:[0,1]\to\mathbb R$ we use for these simulations is
\begin{equation}
    f(s)=rs\left(\frac{s}{\theta}-1\right)\left(1-s\right),
\end{equation}
with the values of $r$ and $\theta$ varying between simulations, and shown in Table~\ref{tab:params}.

For these numerical simulations, we solved the time-dependent reaction--diffusion system using a method-of-lines approach. Space was discretized by finite differences on two meshes---one uniform mesh on $[0,a]$, and another uniform mesh on $[a,L]$, allowing the grid to conform to the interface $x=a$. The prey equation was discretized on the full domain $[0,L]$ while the predator equation was discretized only on $[0,a]$. Homogeneous Neumann boundary conditions for both equations were incorporated directly into the discrete Laplacian, and the interface $x=a$ was handled using the corresponding nonuniform finite-difference stencil. This yielded a sparse system of ODEs, which were integrated using SciPy's implicit Radau solver, facilitated by the analytic Jacobian of the discretized system. Simulations were initialized from spatially homogeneous data with prey density $u\equiv 1$ and predator density $v\equiv 0.5$ on the predator domain. Total prey and predator populations were computed by numerical quadrature of the discrete solutions. When computing the ``terminal population'', the average of total population over the last quarter of the simulation window was taken. These computations are intended to illustrate the analytical results, visualize representative solution behavior, and explore qualitative phenomena suggested by the model. A link to a repository containing the code used for the simulations in this section is found in the Data Declaration section below.

To begin, we examine the long-time behavior of solutions $v(t,x)$ and $u(t,x)$. We observe five types of long-time behavior.
\begin{enumerate}
    \item \textbf{Coexistence equilibrium.} Solutions may approach a locally stable coexistence equilibrium $(\hat u(x),\hat v(x))$ as $t\to\infty$. Such an equilibrium solution is shown in Figure~\ref{fig:equilibrium}. Convergence to coexistence equilibria occurs in many parameter regimes, but seems to occur consistently if the length $L-a$ of the exclusion zone is large enough and if the initial prey density is large enough to avoid immediate extinction.
    \item \textbf{Extinction.} Predators may drive the prey to extinction, and then die off themselves. This occurs when the initial prey density is sufficiently small and the initial predator density is sufficiently large that, on a timescale short relative to the characteristic predator timescale $1/\gamma$, the predators drive the prey beneath their Allee threshold. This is difficult to achieve if the length $L-a$ of the exclusion zone is large because the prey in the exclusion zone cannot be directly controlled by the predators. But as we shall see, when $L-a$ is small, numeric solutions tend to converge to the trivial equilibrium.
    \item\textbf{Prey-only.} If $\gamma>\alpha$, then solutions may approach the non-coexistence equilibrium $(u,v)=(1,0)$ (or solutions may also approach extinction in this case).
    \item\textbf{Limit cycle.} Solutions may approach a limit cycle when the timescale for the decrease of the prey population due to predation in $(0,a)$ is approximately equal to the timescale for the prey population to be replenished from diffusion from the exclusion zone. A periodic solution representing such a limit cycle is shown in Figure~\ref{fig:limit_cycle}. A plot of the total populations
    \begin{equation}
        U(t)=\int_0^L u(t,x)\,dx,\quad V(t)=\int_0^a v(t,x)\,dx
    \end{equation}
    is shown in Figure~\ref{fig:limit_cycle_total_population}, where we observe that $U(t)$ and $V(t)$ become asymptotically periodic as $t\to\infty$. These periodic solutions are characterized by waves of prey traveling from the exclusion zone into the predators' territory.
    \item\textbf{Chaotic behavior.} If the length $a$ of the predator zone is larger than the multiple waves of prey can be present simultaneously (but still such that $L-a$ is large enough to allow the prey to persist), these waves may interfere leading to apparently irregular (and possibly chaotic) behavior. We hypothesize that as $a$ increases, the periodic solutions undergo a cascade of period-doubling bifurcations leading eventually to a chaotic attractor. A more detailed study of this phenomenon will be presented in forthcoming work.
\end{enumerate}

\begin{figure}[h]
  \centering
  \begin{subfigure}{0.48\linewidth}
    \centering
    \includegraphics[width=\linewidth]{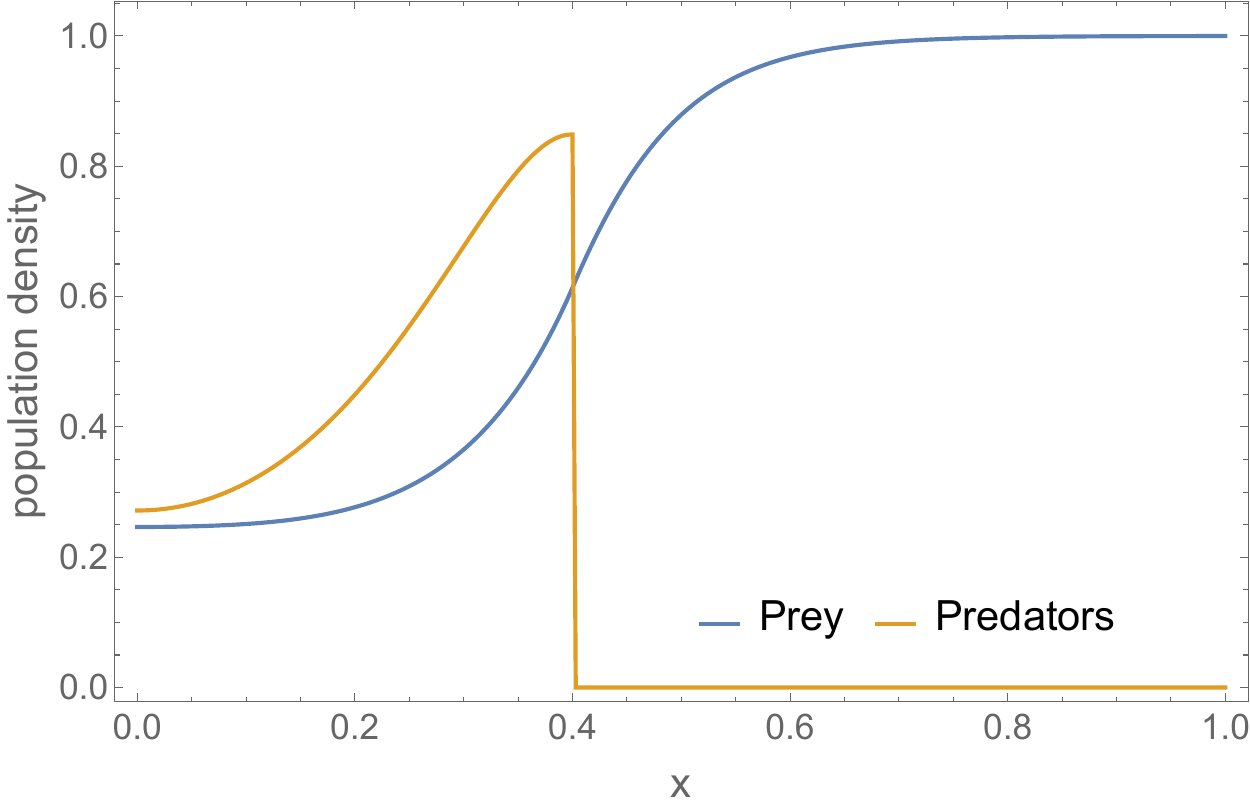}
    \caption{An approximation of an equilibrium solution to~\eqref{eq:higher_dimensional_model} with $\Omega=(0,1)$ and $A=(0,0.4)$.}
    \label{fig:equilibrium}
  \end{subfigure}
  \hfill
  \begin{subfigure}{0.48\linewidth}
    \centering
    \includegraphics[width=\linewidth]{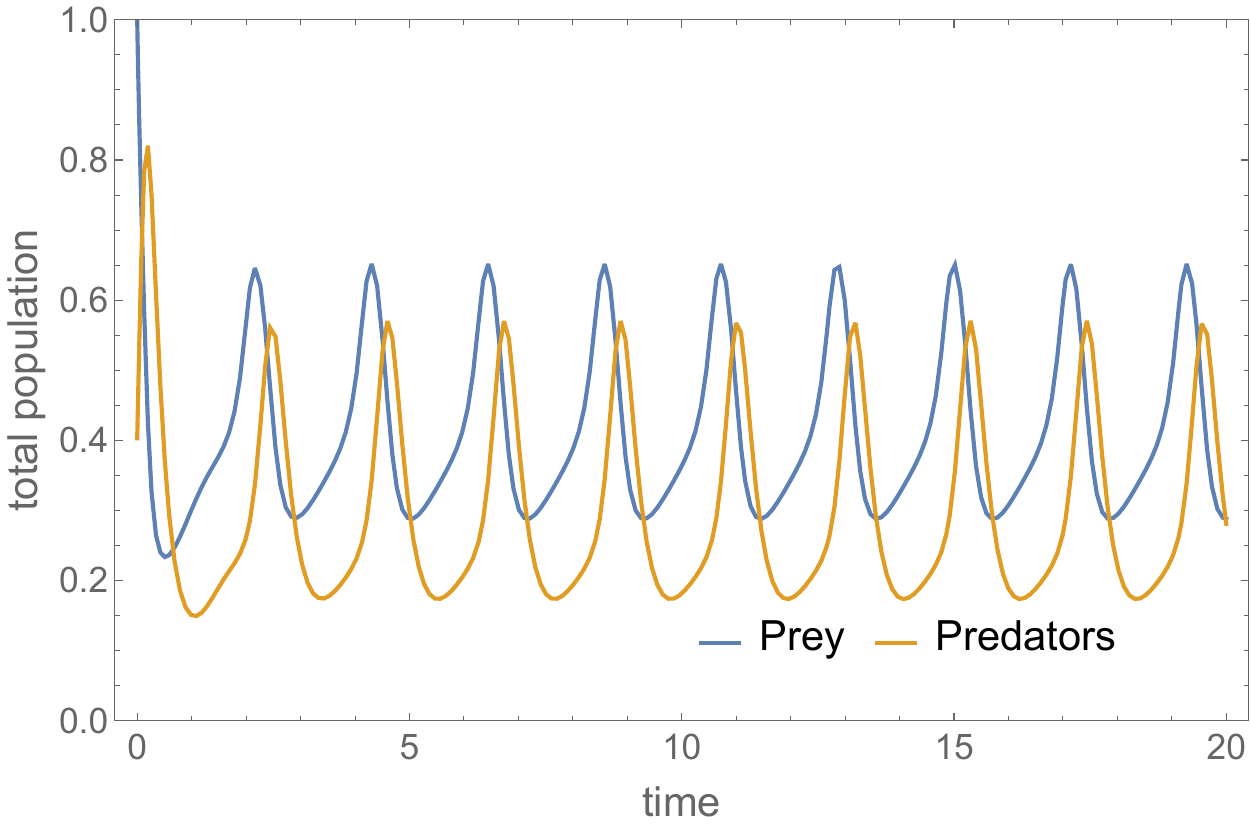} 
    \caption{Total populations of a solution to~\eqref{eq:higher_dimensional_model} approach a limit cycle.}
    \label{fig:limit_cycle_total_population}
  \end{subfigure}
  \caption{In some parameter regimes, solutions to~\eqref{eq:higher_dimensional_model} approach a coexistence equilibrium (left). In others, solutions approach a limit cycle (right).}
  \label{fig:nontrival orbits}
\end{figure}

\begin{figure}[h]
  \centering
  \includegraphics[width=0.9\textwidth]{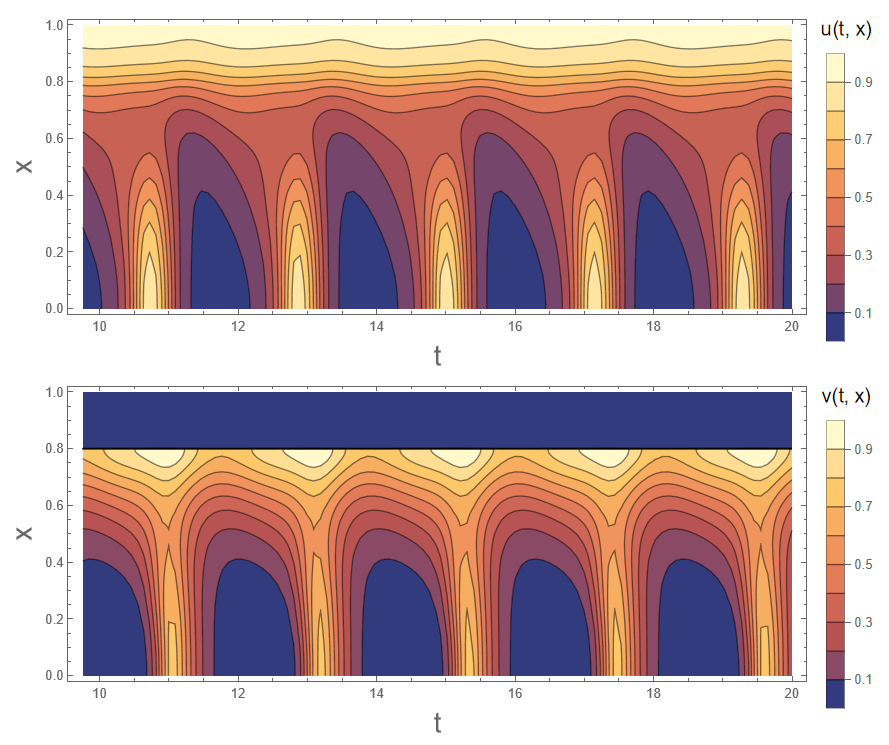}
  \caption{These contour plots show the population densities of prey (top) and predators (bottom) depending on time and space as solutions of~\eqref{eq:higher_dimensional_model} approach a limit cycle.}
  \label{fig:limit_cycle}
\end{figure}


Fixing an initial condition (say, $u(0,x)=v(0,x)=1$) and the parameter values $L$, $\alpha$, $\beta$, $\gamma$, $\theta$, $r$, $d_u$, and $d_v$, we can observe the long-time behavior of solutions to~\eqref{eq:higher_dimensional_model} as the remaining parameter $a$ varies between $0$ and $L$. The following six quantities (three each for $u$ and $v$) are useful in evaluating the long-time behavior of solutions:
\begin{align}
    \widehat U(a)&=\limsup_{t\to\infty}\int_0^L u(t,x)\,dx\quad\quad &&\widehat V(a)=\limsup_{t\to\infty}\int_0^a v(t,x)\,dx\\
    \widecheck U(a)&=\liminf_{t\to\infty}\int_0^L u(t,x)\,dx\quad\quad &&\widecheck V(a)=\liminf_{t\to\infty}\int_0^a v(t,x)\,dx\\
    \overline U(a)&=\lim_{t\to\infty}\frac{1}{T}\int_0^L u(t,x)\,dx\quad\quad && \overline V(a)=\lim_{t\to\infty}\frac{1}{T}\int_0^a v(t,x)\,dx
\end{align}
These are respectively the maximum, minimum, and average of the populations of solutions after a long time. We refer to the triples $(\widehat U(a),\overline{U}(a),\widecheck{U}(a))$ and $(\widehat V(a),\overline{V}(a),\widecheck{V}(a))$ as the \emph{limiting profiles} of $u$ and $v$ respectively. If solutions $u(t,x)$ and $v(t,x)$ converge to an equilibrium, then
\begin{equation}
    \begin{split}
        \widecheck U(a)&=\overline U(a)=\widehat U(a)\\
        \widecheck V(a)&=\overline V(a)=\widehat V(a).
    \end{split}
\end{equation}
But if solutions approach a limit cycle or display chaotic behavior, then 
\begin{equation}
    \begin{split}
        \widecheck U(a)&<\overline U(a)<\widehat U(a)\\
        \widecheck V(a)&<\overline V(a)<\widehat V(a), 
    \end{split}
\end{equation}
with $\widehat U(a)-\widecheck U(a)$ and $\widehat V(a)-\widecheck V(a)$ being the amplitude of the oscillations. The limiting profiles demonstrate how the long-time behavior of solutions depends on $a$ and what types of bifurcations occur. The results are as rich and varied as families of solutions $u(t,x)$ and $v(t,x)$ themselves. In a future work, we will more fully explore this variety. Here, we highlight three particular cases that illustrate a few of the key features of these limiting profiles.
\begin{enumerate}
    \item Figure~\ref{fig:limiting_profile_1} shows three features frequently seen in limiting profiles:
    \begin{itemize}
        \item \textbf{Interior maximum.} The limiting total predator population $\overline V(a)$ is maximized at a point $a_{\text{max}}$ in the interior of the interval $(0,L)$, meaning that the predators do best when they occupy some, but not all of the prey's territory.
        \item \textbf{Hopf bifurcation.} There is a value $a_{\text{hopf}}$ such that for $a<a_{\text{hopf}}$, solutions appear to approach coexistence equilibria while for $a>a_{\text{hopf}}$, solutions appear to approach limit cycles, until $a$ becomes large enough for another bifurcation to occur. This suggests that a Hopf bifurcation is present.
        \item \textbf{Sudden extinction.} At $a=a_{\text{ext}}$, the periodic behavior suddenly vanishes and solutions instead approach extinction (possibly a saddle-node bifurcation of limit cycles occurs here).
    \end{itemize}
    \item Figure~\ref{fig:limiting_profile_3} shows two more features sometimes seen in limiting profiles.
    \begin{itemize}
        \item \textbf{Local maximum in the thin-limit.} The limiting total predator approaches a local maximum in the thin-limit $a\to 0^+$. This was predicted in Section~\ref{sec:thin_limit_1D} due to the indeterminate sign of $v_0$ in~\eqref{eq:v0}. This reflects the result of Theorem \ref{thm:thin_limit_slope}, which predicts that the predator population is locally maximized in the limit $a\to 0$ if $\gamma/\alpha<\theta$.
        \item \textbf{Global maximum at extinction.} Remarkably, after reaching a local minimum, $\overline V(a)$ increases right up until the point of saddle-node bifurcation of limit cycles at $a_{\text{ext}}$. Thus, $a_{\text{max}}$ appears to coincide with $a_{\text{ext}}$ (or, at least these two values are within the numerical step size used for $a$ in this computation).  
    \end{itemize}
    \item Figure~\ref{fig:limiting_profile_4} shows two final features of interest:
    \begin{itemize}
        \item \textbf{Global maximum in the thin-limit.} Not only does $\overline V(a)$ approach a local maximum as $a\to 0^+$, but this is actually the global maximum. This suggests that the best situation for the predators is if they cluster together, leaving the largest possible exclusion zone for the prey.
        \item \textbf{No Hopf bifurcation.} No limit cycles appear for these parameter values, so the bifurcation at $a_{\text{ext}}$ is a saddle-node bifurcation of equilibria (rather than of limit cycles).
    \end{itemize}
\end{enumerate}

\begin{figure}[h]
  \centering
  \includegraphics[width=0.9\textwidth]{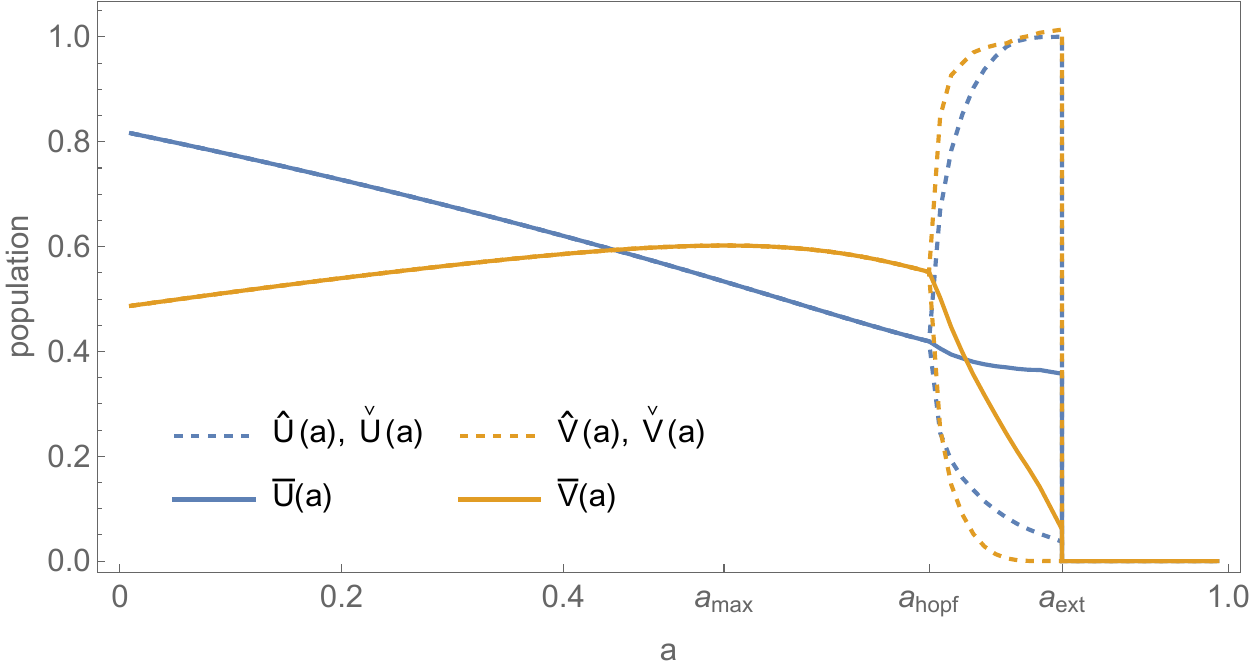}
  \caption{A plot of the limiting profiles exhibiting an interior maximum, Hopf bifurcation, and extinction.}
  \label{fig:limiting_profile_1}
\end{figure}

\begin{figure}[h]
  \centering
  \includegraphics[width=0.9\textwidth]{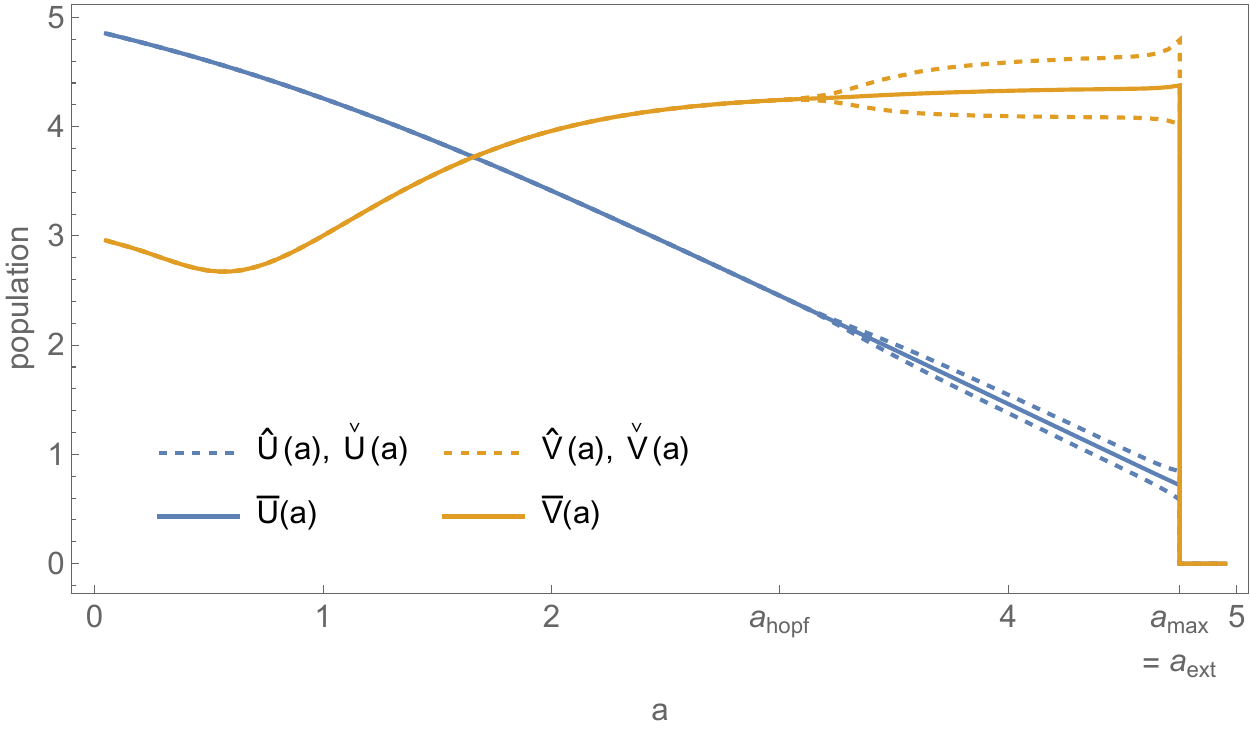}
  \caption{A plot of the limiting profiles exhibiting a local maximum in the thin-limit, a Hopf bifurcation and a global maximum at extinction.}
  \label{fig:limiting_profile_3}
\end{figure}

\begin{figure}[h]
  \centering
  \includegraphics[width=0.9\textwidth]{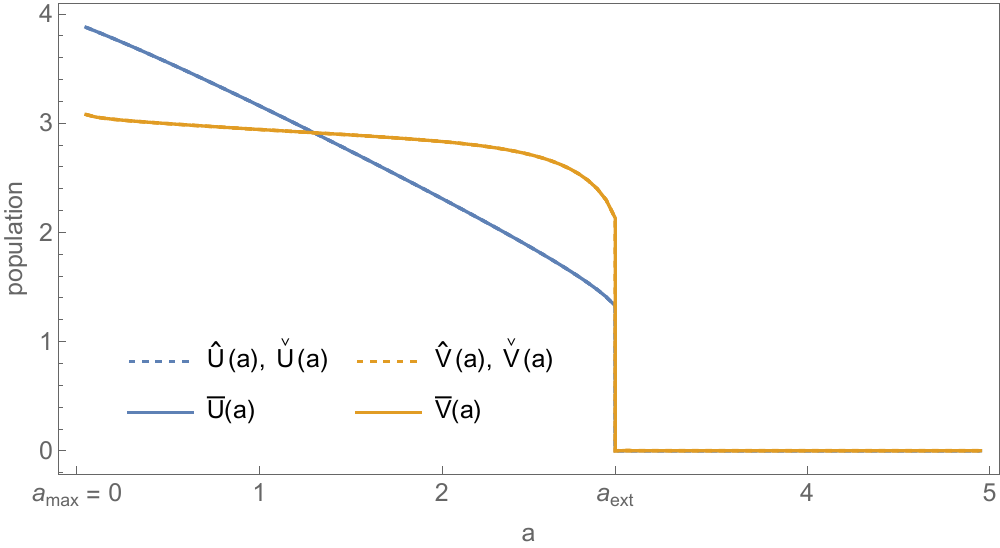}
  \caption{A plot of the limiting profiles exhibiting a global maximum in the thin-limit and no Hopf bifurcation.}
  \label{fig:limiting_profile_4}
\end{figure}

\begin{table}[t]
  \centering
  \scriptsize
  \setlength{\tabcolsep}{3.5pt}
  \renewcommand{\arraystretch}{1.15}
  \begin{tabularx}{\linewidth}{@{}l*{10}{>{\centering\arraybackslash}X}@{}}
    \toprule
    \textbf{Figure} & $L$ & $a$ & $\alpha$ & $\beta$ & $\gamma$ & $\theta$ & $r$ & $d_u$ & $d_v$ \\
    \midrule
   ~\ref{fig:equilibrium} & 1 & 0.4 & 14 & 12 & 5 & 0.05 & 1 & 0.1 & 0.05 \\
   ~\ref{fig:limit_cycle_total_population},~\ref{fig:limit_cycle} & 1 & 0.8 & 14 & 12 & 5 & 0.05 & 1 & 0.1 & 0.05 \\
   ~\ref{fig:limiting_profile_1} & 1 & $0<a<L$ & 13.9 & 10 & 5 & 0.04 & 0.904 & 1 & 0.52 \\
   ~\ref{fig:limiting_profile_3} & 5 & $0<a<L$ & 3 & 3 & 0.9 & 0.3 & 30 & 1 & 1 \\
   ~\ref{fig:limiting_profile_4} & 5 & $0<a<L$ & 1 & 3 & 0.05 & 0.3 & 1 & 1 & 1\\
    \bottomrule
  \end{tabularx}
  \caption{Parameter values used to generate each figure.}
  \label{tab:params}
\end{table}

We conclude this section emphasizing three features we observe in each of the limiting profiles shown in Figures~\ref{fig:limiting_profile_1}--\ref{fig:limiting_profile_4}.
\begin{itemize}
    \item In every case examined, $\overline U(a)$ is decreasing in $a$, indicating that increasing $a$ is always detrimental to the prey. On the other hand, the predators may do best in a small territory, a large territory, or an intermediate territory depending on the parameter values. It is expected that the situation in higher dimensions is even more complicated, with the limiting predator population depending not only on territory size but also shape. Even in one dimension, the question remains whether or not more predators can be supported within a single interval of length $a_{\text{max}}$, or several disjoint intervals whose lengths sum to $a_{\text{max}}$ with exclusion zones between them. These questions will be the topics of further research.
    \item For small enough $a$, solutions are consistently observed to converge to coexistence equilibria (assuming the initial prey density is large enough to avoid extinction), and the limiting total population does not vanish in the limit of small $a$:
    \begin{equation*}
        \lim_{a\to 0^+}\overline V(a)>0.
    \end{equation*}
    This indicates that population density blows up as $a\to 0^+$ while total population approaches a finite limit. This supports the thin-limit analysis of Section~\ref{sec:thin_limit_1D}. In fact, the formula~\eqref{eq:V_exp} allows us to directly calculate the limit of $\overline V(a)$ and its derivative as $a\to 0^+$.
    \item Each of the limiting profiles shown demonstrate that solutions approach equilibrium for $a$ large enough. In each case, the transition from nontrivial limiting behavior to extinction is \emph{sudden}, and appears to occur via a saddle-node bifurcation (either a saddle-node of limit cycles or of equilibria). Thus, in a situation where a predator--prey system is near the extinction threshold $a_{\text{ext}}$, a small perturbation of the exclusion zone size can, without warning, lead to the catastrophic extinction of both species. This is a particularly alarming possibility for the scenario shown in Figure~\ref{fig:limiting_profile_3} because the optimal value of $a$ for the predators is located so precariously close to the extinction threshold.
\end{itemize}
These numerical experiments reveal the rich diversity of behaviors this simple model exhibits. In further work, we will conduct a more thorough numerical investigation of these effects and provide rigorous analytical proofs of some of the phenomena observed numerically.

\section{Discussion}

We have explored a diffusive predator--prey model with an \emph{exclusion zone} where the predators cannot enter. This model is relevant to a variety of ecological scenarios, such as the development of marine protected areas (MPAs) where fishing is excluded from certain areas~\cite{botsford, edgar}, landscapes where property boundaries allow small-bodied species to pass but not larger ones~\cite{loock}, and situations where predator territoriality and movement behaviors concentrate their activity spatially~\cite{moorcroft, catdog}. A similar model was studied by~\cite{DuShi2006}, but here we expand on that work to consider scenarios in which the prey species is subject to an Allee effect~\cite{courchamp, stephens}. More specifically, we considered what ecologists term a \emph{strong Allee effect} in which the growth rate of the prey population is negative for a range of small densities~\cite{stephens}. Review papers characterize the biological evidence for Allee effects in natural and experimental settings~\cite{kramer, muir}.


Holding other parameters fixed, the conditions for coexistence of predator and prey are somewhat relaxed when $\Omega\setminus A$ is sufficiently large, demonstrating the value of the exclusion zone. Interestingly, the transition from coexistence to mutual extinction can be quite abrupt as seen in Figures~\ref{fig:limiting_profile_1}-\ref{fig:limiting_profile_4}. Specifically, for many scenarios in which $a$ is close to $L$ (i.e., when the exclusion zone is small), we observe an apparent discontinuity in which the long-term average population densities of both predators and prey transition from positive values to zero as $a$ passes through a threshold. This sharp transition would be a concern in the context of MPAs because it implies that fisheries could collapse quickly, without much warning, as a consequence of changes in their spatial management. That is, merely changing the location of the boundary of an MPA could be sufficient to drive a well-functioning fisher-fisheries system to collapse, if that boundary change resulted in the MPA being too small. A similarly abrupt collapse to mutual extinction would occur if the MPA were just barely big enough to facilitate coexistence, but some subset of rule-breaking fishers decided to violate the boundary and fish inside the exclusion zone (for real-world examples, see~\cite{iacarella, arias}). In addition, an abrupt population collapse could also occur in the related scenario in which the boundary of the exclusion zone remained fixed, but some exogenous factor (e.g., a feature of global change such as water temperature) reduced the size of the habitable domain such that the MPA is no longer sufficiently large~\cite{bruno, soto}.

We established the existence of nontrivial coexistence equilibria in one and higher spatial dimensions without relying on local bifurcation from semi-trivial steady states. Our proofs, based on a topological degree theory argument, demonstrate that coexistence is possible whenever the predator-free exclusion zone is sufficiently large. This requirement can be interpreted as a geometric threshold: when the prey have enough refuge from predation, they can maintain their density above their Allee threshold and thereby support a persistent predator population. The analysis in higher dimensions further reveals that prey densities far from the predation zone approach their carrying capacity (i.e., the prey density is relatively unaffected by the distant predator population). Our results extend several earlier lines of research on exclusion zones (often referred to in the literature as protection zones). Prior studies, beginning with Du and Shi in~\cite{DuShi2006} and further developed in (to name a few)~\cite{CuiShiWu2014,MinWang2018,ZhangLouWang2022,Yang2023}, primarily use bifurcation theory to establish results on the relationships between  coexistence equilibria and semi-trivial equilibria. By contrast, our approach provides a global existence framework, establishing coexistence far from bifurcation points and for broad parameter ranges.

We also developed key results for the case of \emph{thin-limits} in which the domain size $L$ is fixed and the size $a$ of the predation zone approaches zero as a limit. Surprisingly, we found that the predator does not always go extinct even as the region it can occupy shrinks toward zero. This result, which is more interesting and relevant for versions of the model with multiple spatial dimensions, has the predator persisting in a narrow band or face of the domain by virtue of the influx of prey across the boundary from the (much larger) exclusion zone. This thin-limit result is independent of the existence of an Allee effect for the prey. From an ecological perspective, this result is reminiscent of the biodiversity-enhancing effects of \emph{spatial subsidies} in which resources from an inaccessible area (e.g., an offshore marine area) arrive by virtue of the tides, wind, or even animal agents, providing biomass or nutrient inputs to adjacent regions. Such spatial subsidies can be especially important when the difference between source and destination regions is pronounced (e.g., desert land surrounded by productive seas~\cite{polis, stapp}) or when the magnitude of the cross-boundary flow is large (e.g., salmon swimming upstream to spawn and die~\cite{wipfli}).

In addition to the above analytical results, we also explored the model numerically, focusing on the bifurcations that occur as the size of the exclusion zone changes. We observed that, depending on parameter values, we may see solution approaching coexistence equilibria, extinction, limit cycles, and solutions displaying chaotic behavior. We observe that the size of exclusion zone which maximizes the long-term predator may be very large, or very small, or anywhere in between. Perhaps most surprising is the case when the maximum predator population is achieved for large exclusion zone (when the predation zone is vanishingly small), which is observed numerically and confirmed analytically in the thin limit analysis. For a related optimality problem with MPAs, see Neubert (2003)~\cite{neubert}.

Outside of the context of MPAs, we also believe that our results (particularly the results above pertaining to the existence of alternative regimes with and without an optimal size for the predator exclusion zone) are relevant for ecological scenarios in which predator species concentrate (or are constrained so that they concentrate) their activities in particular subregions of space. One example stems from a study of servals (mid-sized members of the cat family, which feed primarily on rodents) living near a developed industrial area in South Africa~\cite{loock}. In that system, a combination of infrastructure and persecution limit access by large-bodied carnivores but not servals, permitting the servals to reach atypically high densities in the ecological buffer zone around the industrial estate. The phenomenon is interpretable as an example of \emph{mesopredator release}~\cite{prugh, jachowski} in which populations of small-bodied carnivores thrive when populations of large-bodied carnivores are suppressed, thus reducing their roles as competitors and intra-guild predators of the small-bodied species. A second relevant example stems from the recent discovery that species of mammalian carnivores differ greatly in the degree to which they concentrate their movement along \emph{travel routeways} within their home ranges~\cite{catdog}. In particular, canids (members of the dog family) possess, on average, home range distributions that feature substantially greater densities of travel routeways than do felids (members of the cat family), though there are counter-examples within each family of carnivores. Importantly, canids also spend a greater fraction of their time moving along their routeways than do felids.  The analyses in that work are based on so-called \emph{probability ridges} and ridge-associated probability mass~\cite{catdog}, with the upshot being that high levels of ridge-associated probability mass reflect a form of spatial concentration of activity by a predator that is similar in some ways to the consumer exclusion zones that helped motivate our work here. The analogy is admittedly loose in that route-following predators will, of course, sometimes leave those routes to hunt. However, there is a parallel in the realm of resource management wherein several classes of MPAS exist that differ in the porosity (or stringency of enforcement) of the exclusion zone boundaries~\cite{ecosta}.  

Overall, we have found that a diffusive predator--prey model with an exclusion zone limiting the predator and an Allee effect for the prey is capable of a rich array of dynamics. Beyond the ``usual suspects'' of two species coexistence and Hopf bifurcations to periodic predator--prey cycles, we also found that the model can exhibit a thin-limit with a positive predator population size when the exclusion zone is very large and sudden, catastrophic collapse from coexistence to mutual extinction if the exclusion zone becomes too small. Lastly, we found it especially interesting that the complex outcomes for this model were achieved largely through ``spatial changes'' traceable to alteration of the size of the predation zone and the predator-free exclusion zone rather than to the addition of ever more complex expressions for population growth, functional responses, and modes of dispersal. Some of this complexity was, of course, only possible through ``spatial changes'' in combination with the nonlinearity induced by the Allee effect, but the fact remains that assumptions about spatial structure can matter quite strongly, even in a model as well studied as the Lotka-Volterra predator--prey system.  


\subsection{Future work}

Many possibilities exist for extending this work. First, the question of the optimal predator domain remains open: given a fixed prey domain $\Omega$, what choice of predator domain $A\subset\Omega$ maximizes the long-term predator population or the total biomass of both species? A related question is whether multiple disjoint predator or exclusion zones can support more predators than a single contiguous zone. Future work will addressing these problems (either analytically or numerically) to explain the interactions between geometry and ecology.

Second, our thin-limit analysis could be extended to higher dimensions. Deriving and studying the limiting equations when the predator domain collapses to a codimension-1 manifold (such as a coastline or predator corridor) would connect PDE analysis with realistic ecological scenarios. Numerical investigation of these singularly coupled systems will further reveal how ecological phenomena respond to geometric properties such as curvature of the predation zone.

Third, our computations indicate that the system may support a wide variety of bifurcations, including Hopf and saddle-node bifurcations of equilibria and limit cycles, and possibly a period-doubling route to chaos. An analytical treatment these bifurcations would substantially deepen our understanding of the complexity introduced to a predator--prey system by the presence of an exclusion zone.

In summary, this study establishes conditions for coexistence in predator--prey systems with exclusion zones and the Allee effect, showing how spatial structure can sustain or disrupt the survival of both species. Our analytical and numerical results together reveal rich behaviors shaped by domain geometry and diffusion, and lay the groundwork for future studies.

\section*{Declarations}
	
	\noindent\textbf{Funding.} W. F. Fagan was supported by NSF DMS-2451241 The Mathematics of Animal Migration.

	\medskip
	
	\noindent\textbf{Conflict of Interest.} The authors declare that they have no conflicts of interest or competing interests associated with this work.
	
	\medskip
	
	\noindent\textbf{Data Availability.} No external data were used in this manuscript. Data generated in this manuscript (in particular, the data shown in the figures in Section~\ref{sec:numerics}) are available upon request. Additionally, the software used in Section~\ref{sec:numerics} to solve the one dimensional model is available at \url{https://github.com/alexsafsten/ExclusionZone1D}.

\appendix

\section{Proof of Proposition \ref{prop:unique_limit}}\label{app:unique_limit_proof}

\begin{proof}
    By Proposition~\ref{prop:subsolzet} we may select $b_1$ so that if $L>b_1$, then $\zeta_{0,L}(L)>\theta'$. We will show that there exists $b_0\geq b_1$ such that~\eqref{eq:unique_limit} has a unique solution $u$ whenever $L>L_0$ such that $\zeta_{0,L}<u<1$. Then letting $\eta_0=1-\zeta_{0,b_0}(b_0)$, the existence and uniqueness aspect of the result is proved. The fact that this solution is also monotone and $u(L)$ is increasing with $u(L)\to 1$ as $L\to \infty$ will follow from the proof of uniqueness.

    Observe that $\zeta_{0,L}$ is a subsolution of~\eqref{eq:unique_limit} and the constant $1$ is a supersolution. Therefore, there exists a solution $u$ with $\zeta_{0,L}<u<1$. It remains to show that, if $L$ is sufficiently large, this is the only solution such that $\zeta_{a_0,L}<u<1$. We begin by showing that such a solution is monotone increasing. Multiplying~\eqref{eq:unique_limit} by $u'$ and integrating, we find that $u$ satisfies the conservation law~\eqref{eq:energy}, so for any $x$,
    \begin{equation}\label{eq:u_energy}
        \frac{d_u}{2}(u'(x))^2+F(u(x))=F(u(L)).
    \end{equation}
    Since $u(L)>\zeta(L)>\theta'$, $F(u(L))>0$. Whenever $F$ is positive, it is one-to-one, so we conclude that if $u'(x)=0$ for any $x$, then $u(x)=u(L)$. Therefore, $u$ cannot achieve an internal maximum at any point $x_0\in (0,L)$ because at this maximum, we would have $u(x_0)=u(L)$, but then either $u$ is constant on $[x_0,L]$ or $u$ achieves a minimum at a point $x_1\in (x_0,L)$ where $u(x_1)<u(L)$. Neither of these is possible. Similarly, $u$ cannot achieve a local minimum. We conclude that $u$ is monotone. Since $u(L)>\theta$, $u'(L)=0$, and $-d_u u''=f(u)$, we have $u''(x)<0$ for $x$ near $L$ and thus, $u'(x)>0$ for such $x$. But since $u$ is monotone, we must therefore have $u'(x)>0$ for all $x\in [0,L)$.

    Since $u$ is increasing, we can use~\eqref{eq:u_energy} to write $u'$ as:
    \begin{equation}
        u'=\sqrt{\frac{2}{d_u}\left(F(u(L))-F(u)\right)}
    \end{equation}
    Again, due to the strict monotonicity of $u$, we may invert this differential equation to compute $x$ as a function of $u$. Then, letting $q=u(L)$ and $p=\gamma/\alpha$, we may write $L$ as a function of $q$:
    \begin{equation}\label{eq:L_map}
        L=\mathcal L(q)=\sqrt{\frac{d_u}{2}}\int_p^q\frac{1}{\sqrt{F(q)-F(u)}}\,du.
    \end{equation}
    Thus,~\eqref{eq:unique_limit} has a monotone increasing solution $u$ on $[0,L]$ such that $u(L)=q<1$ precisely when $L=\mathcal L(q)$. If there were two such solutions  $u_1$ and $u_2$, we would have $u_1(L)=q_1$ and $u_2(L)=q_2$. If $q_1=q_2$, then by uniqueness of solutions to initial value problems, $u_1=u_2$. Otherwise, we have $\mathcal L(q_1)=\mathcal L(q_2)$. We will show that for $q$ sufficiently close to $1$, $\mathcal L(q)$ is strictly increasing, establishing uniqueness.

    Let $\nu=-f'(1)>0$ and let $\delta>0$ be fixed such that both of the following hold for $1-2\delta<s<1$:
    \begin{itemize}
        \item $-2\nu\leq f'(s)\leq -\nu/2$
        \item $\frac{\nu}{2}(1-s)<f(s)<2\nu(1-s)$
    \end{itemize}
    We also assume $1-2\delta>\theta'$. Suppose $q>1-\delta$. Then we break up~\eqref{eq:L_map} into two integrals:
    \begin{align*}
        \mathcal L(q)&=A(q)+B(q)\\
        &=\sqrt{\frac{d_u}{2}}\int_p^{q-\delta}\frac{1}{\sqrt{F(q)-F(u)}}\,du+\sqrt{\frac{d_u}{2}}\int_{q-\delta}^q\frac{1}{\sqrt{F(q)-F(u)}}\,du.
    \end{align*}

    Since $q>\theta'$, there is no $u\leq q-\delta$ such that $F(u)=F(q)$. Then the function $(u,q)\mapsto F(q)-F(u)$ is continuous and positive on the compact set
    \begin{equation*}
        T=\{(u,q)\in\mathbb R^2:1-\delta\leq q\leq 1,\;p\leq u\leq q-\delta\},
    \end{equation*}
    so we conclude that there exists $m>0$ so that $F(q)-F(u)\geq m$ for all $1-\delta\leq q\leq 1$ and $p\leq u\leq q-\delta$. Thus,
    \begin{equation*}
        |A(q)|\leq\sqrt{\frac{d_u}{2}} \frac{(1-p)}{\sqrt{m}}.
    \end{equation*}
    and
    \begin{align*}
        |A'(q)|&=\sqrt{\frac{d_u}{2}}\left|\frac{1}{\sqrt{F(q)-F(q-\delta)}}-\frac{F'(q)}{2}\int_p^{q-\delta}\frac{1}{(F(q)-F(u))^{3/2}}\,du\right|\\
        &\leq\sqrt{\frac{d_u}{2}}\left(\frac{1}{\sqrt{m}}+\frac{\Vert f\Vert_{C^0}}{2}\frac{(1-p)}{m^{3/2}}\right).
    \end{align*}
    Therefore, both $A(q)$ and $A'(q)$ are uniformly bounded as $q\to 1^-$.  

    Now we show $\lim_{q\to 1^-}B(q)=\lim_{q\to 1^-}B'(q)=\infty$ so for $q$ sufficiently close to $1$, $\mathcal L(q)$ is increasing and unbounded. We may write
    \begin{equation*}
        B(q)=\sqrt{\frac{d_u}{2}}\int_0^\delta\frac{1}{\sqrt{F(q)-F(q-s)}}\,ds.
    \end{equation*}
    That $B(q)$ is differentiable is nontrivial because the integrand is unbounded. Therefore, we first show that $B(q)$ is differentiable and that the expression for the derivative can be found by differentiating under the integral sign, as expected. 

    If $B'(q)$ exists, it is the limit of the following finite difference as $h\to 0$:
    \begin{equation}\label{eq:finite_difference}
        \frac{B(q+h)-B(q)}{h}=\sqrt{\frac{d_u}{2}}\int_0^\delta\frac{1}{h}\left(\frac{1}{\sqrt{F(q+h)-F(q+h-s)}}-\frac{1}{\sqrt{F(q)-F(q-s)}}\right)\,ds.
    \end{equation}
    We will show that there exists a constant $C$ such that for $|h|$ sufficiently small, the integrand of~\eqref{eq:finite_difference} is bounded by $C/\sqrt{s}$, which is integrable. Therefore, by the Lebesgue Dominated Convergence theorem, we may pass the limit into the integral.
    
    First, we note that 
    \begin{equation}
        F(q)-F(q-s)=\int_{q-s}^q f(s')\,ds',
    \end{equation}
    so we may bound this quantity for $1-2\delta<q-s<q<1$:
    \begin{equation}\label{eq:F_bound}
        \frac{\nu}{4}s (s+2\varepsilon)=\frac{\nu}{2}\int_{q-s}^q (1-s')\,ds'\leq F(q)-F(q-s)\leq 2\nu\int_{q-s}^q (1-s')\,ds'= \nu s (s+2\varepsilon),
    \end{equation}
    where $\varepsilon=1-q$. Thus, the integrand in~\eqref{eq:finite_difference} is bounded as follows, assuming $|h|<\varepsilon/2$:
    \begin{align*}
        &\left|\frac{1}{h}\left(\frac{1}{\sqrt{F(q+h)-F(q+h-s)}}-\frac{1}{\sqrt{F(q)-F(q-s)}}\right)\right|\\
        &=\frac{|\sqrt{F(q)-F(q-s)}-\sqrt{F(q+h)-F(q+h-s)}|}{h\sqrt{F(q)-F(q-s)}\sqrt{F(q+h)-F(q+h-s)|}}\\
        &=\frac{8|F(q)-F(q-s)-F(q+h)-F(q+h-s)|}{h\left((F(q)-F(q-s))\sqrt{F(q+h)-F(q+h-s)}+(F(q+h)-F(q+h-s))\sqrt{F(q)-F(q-s)}\right)}\\
        &\leq \frac{8\left|\int_{q-s}^q\int_{y+h}^y f'(z)\,dz\,dy\right|}{h(\nu s)^{3/2}\left((s+2\varepsilon)\sqrt{s+2(\varepsilon-h)}+(s+2(\varepsilon-h))\sqrt{s+2\varepsilon}\right)}\\
        &\leq\frac{16\nu hs}{h(\nu s\varepsilon)^{3/2}\left(2+\sqrt{2}\right)}\\
        &\leq \frac{C}{\sqrt{s}},\quad \text{where}\quad C=\frac{16}{\sqrt{\nu\varepsilon^3}(2+\sqrt{2})}.
    \end{align*}
    Thus, $B(q)$ is differentiable and    
    \begin{equation*}
        B'(q)=\frac{1}{2}\sqrt{\frac{d_u}{2}}\int_0^\delta\frac{F'(q-s)-F'(q)}{(F(q)-F(q-s))^{3/2}}\,ds.
    \end{equation*}
    We may formulate a similar bound to~\eqref{eq:F_bound} for $F'$:
    \begin{equation*}
        F'(q-s)-F'(q)=-\int_{q-s}^{q}f'(r)\,dr\geq\frac{\nu s}{2}.
    \end{equation*}
    Therefore, we calculate
    \begin{equation}\label{eq:B_blowup}
    \begin{split}
        B(q)&\geq \sqrt{\frac{d_u}{2\nu}}\int_0^\delta\frac{1}{\sqrt{s(s+2\varepsilon)}}\,ds=\left.-\sqrt{\frac{2d_u}{\nu}}\log \left(\sqrt{s+2 \varepsilon }-\sqrt{s}\right)\right|_0^\delta=\sqrt{\frac{2d_u}{\nu}}\arcsinh\left(\sqrt{\frac{\delta}{2\varepsilon}}\right),
    \end{split}
    \end{equation}
    and furthermore,
    \begin{align}\label{eq:B_prime_blowup}
        B'(q)\geq\frac{1}{4}\sqrt{\frac{d_u}{2\nu}}\int_0^\delta\frac{1}{\sqrt{s}(s+2\varepsilon)^{3/2}}\,ds=\left.\frac{1}{4}\sqrt{\frac{d_u}{2\nu}}\frac{\sqrt{s}}{\varepsilon  \sqrt{s+2 \varepsilon }}\right|_0^\delta=\frac{1}{4\varepsilon}\sqrt{\frac{d_u}{2\nu}}\sqrt{\frac{\delta }{\delta +2 \varepsilon }}.
    \end{align}
    The expressions~\eqref{eq:B_blowup} and~\eqref{eq:B_prime_blowup} show that $B(q)$ and $B'(q)$ both approach $+\infty$ as $q\to 1^-$ (which corresponds to $\varepsilon\to 0^+$). Since $A(q)$ and $A'(q)$ are both bounded, we conclude that there exists $q_0$ with $1-\delta<q_0<1$ so that $L(q)$ is increasing for $q_0<q<1$ and moreover, $L(q)>L(q')$ for each $q'<q_0$. Let $b_0=\max\{\mathcal L(q_0),b_1\}$ and $\eta_0=\zeta_{0,b_0}$. Then for all $L>b_0$,~\eqref{eq:unique_limit} has a unique solution $u$ so that $1-\eta_0<u(L)<1$. Moreover, we have already shown that this solution is monotone increasing, and that as $L\to\infty$, $u(L)$ must monotonically approach $1$.
\end{proof}

\printbibliography
    
\end{document}